\documentclass[10t]{coulonpaper}

\begin{document}

\newcommand{\Dcomment}[1]{\Dmodif\marginpar{\tiny\begin{center}\textcolor{blue}{#1}\end{center}}} 
\newcommand{\Rcomment}[1]{\Rmodif\marginpar{\tiny\begin{center}\textcolor{red}{#1}\end{center}}}
\newcommand{\Dmodif}{$\textcolor{blue}{\spadesuit}$}
\newcommand{\Rmodif}{\ensuremath{\textcolor{red}{\clubsuit}}}

\newcommand{\vburn}[1]{%
	\mathfrak B_{#1}%
}

\newcommand{\freepb}[3][n]{%
	#2 \operatorname{\ast}\displaylimits^{#1} #3
}

\newcommand{\GrSC}[2]{%
	Gr'(#1, #2)%
}

\newcommand{\girth}{%
	\operatorname{girth}%
}

\newcommand{\Cay}	{\operatorname{Cay}}

\newcommand{\scc}{%
        C'(1/6)
} 
\newcommand{\scsc}{%
	C'_*(1/6)%
} 

\title{Small cancellation theory over Burnside groups}
\author{Rémi Coulon, Dominik Gruber}

\maketitle

	\begin{abstract}
	We develop a version of small cancellation theory in the variety of Burnside groups.
	More precisely, we show that there exists a critical exponent $n_0$ such that for every odd integer $n\geq n_0$, the well-known classical $C'(1/6)$-small cancellation theory, as well as its graphical generalization and its version for free products, produce examples of infinite $n$-periodic groups.
	Our result gives a powerful tool for producing (uncountable collections of) examples of $n$-periodic groups with prescribed properties.
	It can be applied without any prior knowledge in the subject of $n$-periodic groups.
	
	As applications, we show the undecidability of Markov properties in classes of $n$-periodic groups, we produce $n$-periodic groups whose Cayley graph contains an embedded expander graphs, and we give an $n$-periodic version of the Rips construction. 
	We also obtain simpler proofs of some known results like the existence of uncountably many finitely generated $n$-periodic groups and the SQ-universality (in the class of $n$-periodic groups) of free Burnside groups.
\end{abstract}



\section{Introduction}
Let $n$ be an integer.
A group $G$ is \emph{periodic of exponent $n$} (or simply \emph{$n$-periodic}) if it satisfies the law $x^n = 1$, i.e. for every element $g \in G$, we have $g^n = 1$.
The \emph{Burnside variety of exponent $n$}, denoted by $\vburn n$, is the class of all $n$-periodic groups.
The study of this variety was initiated by W.~Burnside in 1902 who asked whether a finitely generated group in $\vburn n$ is necessarily finite \cite{Burnside:1902vi}. 
Burnside's problem inspired a number of significant developments in combinatorial and geometric group theory throughout the twentieth century and has been resolved negatively in the case that $n$ is large enough (see Novikov-Adian \cite{NovAdj68c}, Ol'shanski\u\i\ \cite{Olc82}, Lysenok \cite{Lys96}, Ivanov \cite{Iva94}, Delzant-Gromov \cite{DelGro08}, and Coulon \cite{Coulon:2014fr}). Despite much progress, many aspects of Burnside varieties remain unexplored, in part owing to the fact that it is generally a non-trivial task to even write down a single new example of an infinite $n$-periodic group. 
One purpose of our paper is to remedy this difficulty.

\medskip
This article provides a versatile and easy-to-apply tool for constructing examples of finitely generated infinite $n$-periodic groups with prescribed properties. While examples of such groups have already appeared in the literature, their constructions rely on heavy technical machinery \cite{Adian:1979th,Olc91a} involved to solve the Burnside problem. 
By contrast, the tool we provide can be applied without any prior knowledge on $n$-periodic groups.

\medskip
We develop a small cancellation theory in Burnside varieties which is the exact analogue of the usual $C'(1/6)$--small cancellation theory, in its classical forms and its recent graphical generalizations.
Recall that small cancellation theory has, ever since the mid--twentieth century, provided a seemingly unending source of -- often very explicit -- examples of infinite groups with striking properties in a wide range of contexts. 
It has produced results on solvability and unsolvability of algorithmic decision problems \cite{Greendlinger:1960jt,Rips:1982coa}, it has made major contributions to the understanding of features of negative and non-positive curvature in groups \cite{Gro87,Pankratev:1999vo,Wis04,Gruber:2014wo,Arzhantseva:2016uk}, it plays a major role in the understanding of random groups \cite{Gromov:1993vu}, and it has provided the only source of groups containing expanders in their Cayley graphs and therefore satisfying exceptional analytic properties \cite{Gro03,ArzDel08,Osajda:2014uk}.
Our result thus makes the method for constructing such groups available in a Burnside variety. 
Before stating our main theorem precisely, we motivate our work with a few applications.

\paragraph{Notations.}
In this article $\N$ (\resp $\N^*$) stands for the set of non-negative (\resp positive) integers.

\paragraph{Periodic monster groups.}
Small cancellation theory is a powerful tool for exhibiting groups with exotic properties. 
Gromov's monster groups are such examples.
Using a graphical version of small cancellation theory, Gromov built finitely generated groups that coarsely contain expander graphs in their Cayley graphs \cite{Gro03}, see also \cite{ArzDel08,Osajda:2014uk}.
As a consequence, they do not coarsely embed into Hilbert spaces, and therefore they do not have Yu's property A \cite{Yu2000}, and they are counterexamples to the Baum-Connes conjecture with coefficients \cite{HLS2002}. Gromov's groups and related constructions are currently the only source of examples of groups with any of these three properties. While these constructions necessarily produce groups with infinite order elements, we are able to show that such Gromov's monsters also exist in Burnside varieties.

\begin{theo}[Gromov's monster, see \autoref{res: Gromov monster}]
	There exists $n_0 \in \N$ such that for every odd exponent $n \geq n_0$, there exists a group $G \in \vburn n$ generated by a finite set $S$ such that the Cayley graph of $G$ with respect to $S$ contains an embedded (and, moreover, coarsely embedded) expander graph.
	In particular, there exists a finitely generated $n$-periodic group that does not have Yu's property A, that does not coarsely embed in a Hilbert space and that does not satisfy the Baum-Connes conjecture with coefficients.
\end{theo}

\paragraph{Decision problems in Burnside varieties.}
One very important question in group theory is to understand what properties of a group can be checked algorithmically.
The word problem is probably one of the most famous instances of this question. 
For a group $G$ given by a finite (or recursively enumerable) presentation $\pres SR$, it asks if there exists an algorithm which can decide whether or not a word in the alphabet $S \cup S^{-1}$ represents the identity element.
It was proved by Novikov that there exists a group with unsolvable word problem \cite{Novikov:1955uq}.
Building on this example, Adian and Rabin showed the following fact \cite{Adian:1958tx,Rabin:1958vv}.
Given a Markov property $\mathcal P$, there is no algorithm that takes a finite presentation and decides whether or not the corresponding group has $\mathcal P$.
Roughly speaking, this means that most of the non-trivial decision problems one can think of are unsolvable in the class of all finitely presented groups.
However if one restricts attention to a smaller class (e.g. abelian groups, nilpotent groups, hyperbolic groups, etc), then many decision problems become solvable. It is therefore natural to ask what decision problems can be solved in a Burnside variety $\vburn n$.

\medskip
We show here the exact analogue of Adian-Rabin theorem in $\vburn n$. 
As it remains unknown whether there exists an infinite finitely presented periodic group, in our approach of decision problems we consider groups which are finitely presented relative to $\vburn n$. To that end we briefly give terminology for our statement.
Given a group $G$, we write $G^n$ for the (normal) subgroup of $G$ generated by the $n$-th power of all its elements.
Recall that the \emph{free Burnside group of exponent $n$ generated by a set $S$}, is defined by 
\begin{displaymath}
	\burn Sn = \F(S)/\F(S)^n = \pres S{x^n = 1,\, \forall x}.
\end{displaymath}
It is the free element of $\vburn n$. 
This means that given a group $G \in \vburn n$, any map $S \to G$ uniquely extends to a homomorphism $\burn Sn \to G$.
We say that a group $G \in \vburn n$ is \emph{finitely presented relative to $\vburn n$} if $G$ is isomorphic to the quotient of a finitely generated free Burnside group $\burn Sn$ by the normal closure of a finite subset of $\burn Sn$.
Equivalently, $G$ is the quotient of a finitely presented group $G_0 = \pres SR$ by $G_0^n$.
In this situation we will refer to $\pres SR$ as a \emph{finite presentation relative to $\vburn n$}.

\begin{theo}[see \autoref{res: unsolvability}]
	There exists a critical exponent $n_0 \in \N$ with the following property.
	Let $n \geq n_0$ be an odd integer that is not prime.
	Let $\mathcal P$ be a subclass of $\vburn n$ for which there exist $G_+,G_- \in \vburn n$ which are finitely presented relative to $\vburn n$ such that 
	\begin{enumerate}
		\item the group $G_+$ belongs to $\mathcal P$,
		\item any $n$-periodic group containing $G_-$  as a subgroup does not belong to $\mathcal P$.
	\end{enumerate}
	Then there is no algorithm that takes as input a finite presentation relative to $\vburn n$ and determines whether the corresponding group $G \in \vburn n$ belongs to $\mathcal P$ or not.
\end{theo}

Our proof relies on a result of Kharlampovich, who showed that if $n$ is a sufficiently large exponent that is not prime, then there exists a group $G \in \vburn n$ which is finitely presented relative to $\vburn n$ with unsolvable word problem \cite{Kharlampovich:1995ea}. 
The following is an immediate consequence of our theorem.

\begin{coro}[see \autoref{res: unsolvability - applications}]
There exists a critical exponent $n_0 \in \N$, such that for every odd integer $n\geq n_0$ which is not prime, the following properties are algorithmically undecidable from finite presentations relative to $\vburn n$: begin trivial, finite, cyclic, abelian, nilpotent, solvable, amenable. 
\end{coro}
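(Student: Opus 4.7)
The plan is to apply the preceding theorem once for each property $\mathcal P$ in the stated list. For each such $\mathcal P$ we must exhibit groups $G_+, G_- \in \vburn n$ that are finitely presented relative to $\vburn n$, with $G_+ \in \mathcal P$ and every $n$-periodic overgroup of $G_-$ failing to lie in $\mathcal P$. The trivial group works uniformly as $G_+$, since it belongs to every one of the seven classes and is finitely presented relative to $\vburn n$ (for instance by $\pres aa$).

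The central observation for $G_-$ is that each of the seven properties is closed under taking subgroups: a subgroup of a trivial, finite, cyclic, abelian, nilpotent, solvable, or amenable group has the same property. Consequently it is enough to produce a single $n$-periodic group $G_-$, finitely presented relative to $\vburn n$, that fails all seven properties simultaneously; then this one $G_-$ witnesses the required hypothesis for every $\mathcal P$ in the list, since any $n$-periodic group containing $G_-$ would otherwise force $G_-$ itself into $\mathcal P$.

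The natural candidate is the free Burnside group $G_- = \burn Sn$ with $|S| = 2$, which has the trivial relative presentation $\pres S\emptyset$. For odd $n$ sufficiently large, $\burn Sn$ is infinite by Novikov--Adian, hence non-trivial, non-finite, and (being two-generated and non-cyclic) non-cyclic; it is manifestly non-abelian and hence non-nilpotent; and it is non-amenable by a theorem of Adian, which further forces non-solvability since solvable groups are amenable. Taking $n_0$ to be the maximum of the critical exponents coming from the preceding theorem, the Novikov--Adian theorem, and Adian's non-amenability theorem, one applies the preceding theorem once per property to conclude.

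The only step requiring input beyond our small cancellation framework is the classical verification that $\burn Sn$ simultaneously fails all seven properties, which is where the hardest input (Adian's non-amenability) lies; but no further argument is needed on our side. A minor technical point worth checking is that the relative presentation $\pres S\emptyset$ does indeed present $\burn Sn$ in the sense of the preceding definition, which is immediate from the definition of the free Burnside group as $\F(S)/\F(S)^n$.
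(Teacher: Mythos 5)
Your proof is correct and fills in exactly the argument the paper implicitly expects, as it states the corollary without proof. The uniform choice $G_+ = \{1\}$, the observation that all seven properties are subgroup-closed so that a single $G_- = \burn Sn$ (with $|S|=2$) serves for all of them, and the appeal to Novikov--Adian for infiniteness and Adian for non-amenability are precisely the intended ingredients. One small remark: you do not actually need Adian's non-amenability theorem to rule out solvability, since a finitely generated solvable group of bounded exponent is locally finite and hence finite; Adian's theorem is only genuinely needed for the amenability item.
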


Another famous application of small cancellation theory is the Rips' construction \cite{Rips:1982coa}.
It shows that every finitely presented group $Q$ is the quotient of a (hyperbolic) small cancellation group $G$ by a finitely generated normal subgroup $N$.
In particular it allows to transfer various pathological properties of $Q$ to the subgroups of $G$ (see for instance Baumslag-Miller-Short \cite{Baumslag:1994kn}).
Our approach to small cancellation theory over Burnside groups provides an analogue of Rips' construction (see \autoref{res: rips construction}).

\paragraph{New proofs of known results.} Our main result also enables us to give explicit constructions that provide concise new proofs of the following known results.
\begin{theo}[Atabekyan \cite{Atabekyan:1987vz}, see \autoref{exa: thue-morse}]
	There exists  $n_0 \in \N$, such that for every odd integer $n \geq n_0$, for every $r \geq 2$, there are uncountably many groups of rank $r$ in $\vburn n$.
\end{theo}
	A group $G$ is \emph{SQ-universal in $\vburn n$} if for every countable group $C\in \vburn n$ there exists a quotient $Q$ of $G$ such that $C$ embeds into $Q$.
\begin{theo}[Sonkin \cite{Sonkin:2003il}, see \autoref{theo:sq}]
	There exists $n_0 \in \N$ such that for every odd exponent $n \geq n_0$, for every set $S$ containing at least two elements, the free Burnside group $\burn Sn$ is SQ-univeral in $\vburn n$.
\end{theo}

\paragraph{The small cancellation theorem.} 
Let us present now a simplified version of our main result. 
We use the usual definition of the classical $C'(\lambda)$-condition \cite[Chapter V]{LynSch77}; see also \autoref{defi: small cancellation classical}. 
Roughly speaking, this condition on a presentation requires that whenever two relators $r\neq r'$ have a common subword $u$, then $|u|<\lambda\min\{|r|,|r'|\}$.

\begin{theo}\label{res: main theo - classical}
Let $p\in\N^*$. There exists $n_p\in\N$ such that for every odd integer $n\geq n_p$ the following holds.
Let $G = \pres SR$ be a non-cyclic group given by a classical $C'(1/6)$-presentation. 
Assume that no $p$-th power of a word is a subword of an element of $R$, and no $r\in R$ is a proper power.
Then the quotient $G/G^n$ is  infinite. 
\end{theo}

\medskip
Note that there is no restriction on the cardinalities of $S$ or $R$ or on the length of the relations in $R$. 
The constant $n_p$ only depends on $p$.
In practice, $p$ will never be larger than $10$.
Hence, $n_p$ can be thought of as a universal constant.

\medskip
Our proof, in fact, yields the much more general Theorems~\ref{res: main theo - regular sc} and \ref{res: main theo - product sc}, encompassing Gromov's \emph{graphical} small cancellation theory, as well as classical and graphical small cancellation theory over free products. 
The philosophy is always similar to the one of \autoref{res: main theo - classical}: if the small cancellation presentation defines a non-elementary group, and if some  restrictions on proper powers are satisfied, then some of the standard conclusions of small cancellation theory hold. 
For example, $n$-periodic graphical small cancellation produces infinite $n$-periodic groups with prescribed subgraphs in their Cayley graphs, and $n$-periodic free product small cancellation produces infinite $n$-periodic quotients of free products of $n$-periodic groups in which each of the generating free factors survives as subgroup.

\begin{rema} 
Even in the case that both $S$ and $R$ are finite, the statement of \autoref{res: main theo - classical} is \emph{not} covered by prior results. It is known \cite{Olc91,DelGro08} that, given a torsion-free Gromov hyperbolic group $G$, there exists $n_G \in \N$ such that for all odd integers $n\geq n_G$, the quotient $G/G^n$ is infinite. Note here that $n_G$ depends on the specific group $G$ and, in fact, given an exponent $n\in \N$, the proof only applies to finitely many hyperbolic groups $G$ of a given rank. 
In our result, on the other hand, the constant $n_p$ is independent of the presentation $\pres SR$. 
We shall see in the following how this easily enables us to construct, for any $n\geq n_p$ odd, \emph{infinitely many} (and, in fact, uncountably many) examples of infinite $n$-periodic groups (say, of rank 2).
\end{rema}

\paragraph{Strategy of proof.} Our small cancellation assumption in \autoref{res: main theo - classical} has two parts. 
The first is the usual $C'(\lambda)$ small cancellation condition requiring that any two elements of $R$ must have small common subwords -- this condition is a standard tool for producing infinite ``negatively curved'' groups. 
Together with the assumption that no relation $r\in R$ is a proper power, it also ensures that $G$ is torsion-free.
This prevents $G$ from having torsion that would be incompatible with the $n$-torsion introduced later on.

\medskip
The second part of our assumption requires a uniform bound on the powers appearing as subwords of elements of $R$. This essentially tells us that the elements of $R$ are ``transverse to the Burnside relations'' of the form $x^n=1$. For some specific adhoc constructions, experts of Burnside groups implicitly observed that adding appropriate aperiodic relations should not affect significantly the proof of the infiniteness of $\burn Sn$ \cite{Olc91a}.
However their method requires to re-run the full proof of the Novikov-Adian theorem.
Our approach of \autoref{res: main theo - classical} has the following advantage: we are able to treat completely separately the relations of $G/G^n$ coming from $R$ and the Burnside relations. 
In particular, our approach provides a better geometric understanding of the bounded torsion groups that arise as small cancellation quotients of free Burnside groups.
We now briefly describe the strategy of our proof.

\medskip 

The first step is to study the geometry of groups defined by $C'(1/6)$ small cancellation presentations. 
If $G = \pres SR$ is such a group, denote by $W$ all elements of $G$ represented by subwords of elements of $R$. Consider the Cayley graph $\dot X$ of $G$ with respect to the (possibly infinite) generating set $S\cup W$. Thus, as graph, $\dot X$ is obtained from the usual Cayley graph by attaching to each embedded cycle $\gamma$ that corresponds to a relator in $R$ a complete graph on the vertices of $\gamma$.
Gruber and Sisto proved that $\dot X$ is Gromov hyperbolic, and the natural action of $G$ on $\dot X$ is non-elementary unless $G$ is virtually cyclic \cite{Gruber:2014wo}. In our work, we study the action of $G$ on $\dot X$ and prove that, in fact, our second assumption (the one regarding powers appearing as subwords of relations) ensures that the action of $G$ on $\dot X$ is acylindrical. We also determine the non-trivial elliptic elements for the action of $G$ on $\dot X$.
In the case of \autoref{res: main theo - classical}, we show that there are none.

\medskip

The second step of the proof is to ``burnsidify'' the group $G$ by killing all possible $n$-th powers.
To this end we use a theorem of Coulon that can be roughly summarized as follows \cite{Coulon:2016if}.
Assume that $G$ is a group without involution (i.e.\ element of order 2) acting acylindrically non-elementary on a hyperbolic space $\dot X$. Then there exists a critical exponent $n_0 \in \N$ such that for every odd integer $n\geq n_0$, there exists an \emph{infinite} quotient $G_n$ of $G$ with the following properties: every elliptic subgroup of $G$ embeds into $G_n$, and for every element $g \in G_n$ that is not the image of an elliptic element of $G$, we have $g^n = 1$.
In the settings of \autoref{res: main theo - classical}, $G$ has no non-trivial elliptic element, hence the quotient $G_n$ obtained from Coulon's result is exactly $G/G^n$. 
Coulon's theorem, moreover, proves that the map $G\to G_n$ preserve the small scale geometry induced by the metric of $\dot X$.
This lets us prove that certain non-trivial elements of $G$ survive in $G_n$.  
In particular, it enables us to promote small cancellation constructions of groups with certain prescribed subgraphs (e.g. expander graphs) to the setting of $n$-periodic groups.

\medskip

We explain why the critical exponent $n_p$ in \autoref{res: main theo - classical} only depends on $p$.
The hyperbolicity constant of Gruber and Sisto's space $\dot X$ is uniform, i.e. independent of the specific $C'(1/6)$-presentation $\pres SR$. Our proof shows that the acylindricity parameters for the action of $G$ on $\dot X$ only depend on the number $p$ that provides the bound on powers appearing as subwords of relators. Finally, the critical exponent given by Coulon's theorem only depends on the those parameters.
In other words, the uniform control on powers appearing as subwords of elements of $R$, i.e.\ the degree of transversality to the Burnside relators, has a very strong geometric interpretation in term of the action of $G$ on $\dot X$, which gives us the desired control on $n_p$.

\paragraph{Outline of the article.}
In \autoref{sec: main results} we state the main results of the article and give a proof of the applications presented in the introduction.
\autoref{sec: hyperbolic} reviews some basic facts about hyperbolic geometry and acylindrical actions on hyperbolic spaces.
\autoref{sec: acylindrical to burnside} is devoted to the second step of the aforementioned strategy. 
We explain how given a group $G$ acting acylindrically on a hyperbolic space $X$, we can turn $G$ into a periodic group with exponent $n$.
In particular we highlight the fact that the exponent $n$ does not depend on the group $G$, but only on the parameters of the acylindrical action.
In \autoref{sec: sc to acylindrical} we show that a group $G$ statisfying a suitable small cancellation condition acts acylindrically on its hyperbolic coned-off Cayley graph.
Moreover we explain how the parameters of this action are related to those of our small cancellation assumption.
The final section provides the proofs of the main theorems stated in \autoref{sec: main results} using the results of \autoref{sec: acylindrical to burnside} and \autoref{sec: sc to acylindrical}.

\paragraph{Acknowledgement.}
This material uses work supported by the National Science Foundation under Grant No. DMS-1440140 while the authors were in residence at the Mathematical Sciences Research Institute in Berkeley, California, during the fall 2016 semester.
The authors acknowledge the support of the Newton Institute for Mathematical Sciences in Cambridge, United Kingdom,  during the  spring 2017 program
\emph{Non-positive curvature: group actions and cohomology}. This work is partially supported by EPSRC Grant Number EP/K032208/1.
The authors were also supported by the Swiss-French Grant PHC Germaine de Staël 2016 - 35033TB.
The first author acknowledges support from the Agence Nationale de la Recherche under Grant Dagger ANR-16-CE40-0006-01.
The second author acknowledges support from the Swiss National Science Foundation Professorship FN
PP00P2-144681/1.
The authors thank for the referees for many useful comments and corrections

%
\section{Main results}
%
\label{sec: main results}

In this section we state the main results of the article and explain how they can be applied to cover all the examples presented in the introduction.

%
\subsection{Small cancellation}\label{section21}
%

Before stating its graphical generalization, we remind the reader of the classical $C'(\lambda)$-condition for reference. Our definition corresponds to the standard one given, for example, in the book of Lyndon and Schupp \cite[Chapter V]{LynSch77}.

\begin{defi}[Classical small cancellation]
\label{defi: small cancellation classical}
Given a presentation $\pres SR$, a \emph{piece} is a word $u$ that is a common prefix of two distinct cyclic conjugates of elements of $R\cup R^{-1}$. 
We say $\pres SR$ satisfies the classical $C'(\lambda)$-condition if every element of $R$ is cyclically reduced and if whenever a classical piece $u$ is a subword of a cyclic conjugate of some $r\in R$, then $|u|<\lambda|r|$.
\end{defi}

\paragraph{Group defined by a labelled graph.}
A graph is a graph in the sense of Serre \cite{Serre:1977wy}. This means a graph $\Gamma$ is an ordered pair of sets $(V,E)$ together with maps $\iota,\tau:E\to V$ (initial vertex and terminal vertex) and a fixed-point free involution $\cdot^{-1}:E\to E$ (edge inversion) such that for each $e\in E$ we have $\tau e=\iota (e^{-1})$.

Let $S$ be a set.
A \emph{labelled} graph is a graph $\Gamma = (V, E)$ together with a map $E\to S\sqcup S^{-1}$ that is compatible with the inversion map.
If we write $\Gamma$ as disjoint union of its connected components $\Gamma=\sqcup_{i\in I}\Gamma_i$ and, for each $\Gamma_i$ choose a vertex $v_i$, then the labelling induces a natural map $*_{i\in I}\pi_1(\Gamma_, v_i) \to \F(S)$.
The group $G(\Gamma)$ is defined as the quotient of $\F(S)$ by the normal closure of the image of this map.
Note that this normal closure, and hence $G(\Gamma)$, does not depend on the choice of the vertices $v_i$.

\medskip
The group $G(\Gamma)$ can be also described by a specific presentation.
Given a path $\gamma$ in $\Gamma$, the \emph{label} of $\gamma$, denoted $\ell(\gamma)$, is the product of the labels of its edges seen as an element of the free monoid on $S\sqcup S^{-1}$.
A presentation of $G(\Gamma)$ is
\begin{displaymath}
	G(\Gamma):=\left\langle S\mid \text{labels of simple closed paths in }\Gamma\right\rangle
\end{displaymath}
Let $\Cay(G(\Gamma),S)$ be the Cayley graph of $G(\Gamma)$ with respect to $S$.
If $v_i$ is a vertex in a connected component $\Gamma_i$ of $\Gamma$ and $g\in G(\Gamma)$, then there exists a unique label-preserving graph homomorphism $\Gamma_i\to\Cay(G(\Gamma),S)$ that maps $v_i$ to $g$. 

\paragraph{Small cancellation condition.}
Let $\Gamma$ be a graph labelled by a set $S$.
A \emph{piece} is a word $w$ over the alphabet $S\sqcup S^{-1}$ labelling two paths $\gamma_1$ and $\gamma_2$ in $\Gamma$ so that there is no label preserving automorphism $\phi$ of $\Gamma$ such that $\gamma_2 = \phi \circ \gamma_1$.

\begin{defi}[Graphical small cancellation {\cite[Definition~1.3]{Gruber:2015fu}}]
\label{def: graphical small cancellation}
	Let $\lambda \in (0,1)$.
	Let $\Gamma$ be a graph labelled by a set $S$.
	We say that $\Gamma$ satisfies the $C'(\lambda)$ small cancellation condition if the following holds.
	\begin{enumerate}
		\item The graph $\Gamma$ is \emph{reduced} in the following sense: two edges with the same initial vertex  cannot have the same label.
		\item For every simple loop $\gamma$ in $\Gamma$, the $w$ is a piece labelling a subpath of $\gamma$, then $\abs w < \lambda \abs \gamma$ where $\abs{\, .\, }$ stands for the length of words/paths.
	\end{enumerate}
\end{defi}

This settings extends the classical small cancellation.
Indeed \autoref{defi: small cancellation classical} exactly corresponds to the case where $\Gamma$ is a disjoint union of circle graphs, each of them being labelled by a distinct relation $r \in R$.

\medskip
In order to state the next theorem we define a strengthening of the small cancellation condition.
To perform small cancellation in the variety $\vburn n$ we need indeed an assumption to ensure that the relation we consider are ``transverse to the Burnside relations''.
The idea is to require that the words labelling path in $\Gamma$ are not large power, unless they already corresponds to an $n$-th power labelling a closed path.

\begin{defi}[Periodic small cancellation]
\label{def: graphical small cancellation - burnside}
	Let $n, p \in \N$ and $\lambda \in (0,1)$.
	Let $\Gamma$ be a graph labelled by a set $S$.
	We say that $\Gamma$ satisfies the $C'_n(\lambda,p)$-small cancellation assumption if the following holds.
	\begin{enumerate}
		\item \label{enu: graphical small cancellation - burnside - reduced}
		The graph $\Gamma$ is \emph{strongly reduced} in the following sense: every edge has two distinct vertices; 
		there is no closed path with label $st$ or $st^{-1}$ for $s,t\in S, s\neq t$.
		\item \label{enu: graphical small cancellation - burnside - sc}
		 $\Gamma$ satisfies the $C'(\lambda)$-condition.
		\item \label{enu: graphical small cancellation - burnside - power}
		 Whenever $w$ is a cyclically reduced word such that $w^p$ labels a path in $\Gamma$, then $w^n$ labels a \emph{closed} path in $\Gamma$.
	\end{enumerate}
\end{defi}

Let $n \in \N$.
Following the construction above, we define the analogue of $G(\Gamma)$ in the Burnside variety $\vburn n$.
For each connected component $\Gamma_i$ of $\Gamma$ we choose a vertex $v_i \in \Gamma_i$.
The labelling $\ell$ of $\Gamma$ induces a map $*_{i\in I}\pi_1(\Gamma_, v_i) \to \burn Sn$.
The group $G_n(\Gamma)$ is defined as the quotient of $\burn Sn$ by the normal closure of the image of this map.
Again this construction does not depend on the choice of the vertices $v_i$.
The group $G_n(\Gamma)$ can also be described as the $n$-periodic quotient of $G(\Gamma)$.
More precisely, $G_n(\Gamma)$ is the quotient of $G(\Gamma)$ by the (normal) subgroup of $G(\Gamma)$ generated by the $n$-th power of all its elements.
As previously, if $v_i$ is a vertex in a connected component $\Gamma_i$ of $\Gamma$ and $g\in G_n(\Gamma)$, then there exists a unique label-preserving graph homomorphism $\Gamma_i\to\Cay(G_n(\Gamma),S)$ that maps $v_i$ to $g$.

\begin{theo}
\label{res: main theo - regular sc}
	Let $p \in \N^*$.
	There exists a critical exponent $n_p \in \N$ such that for every odd integer $n \geq n_p$ the following holds.
	Let $S$ be a set containing at least two elements.
	Let $\Gamma$ be a graph labelled by $S$ satisfying the $C'_n(1/6,p)$ condition.
	We assume that there is no finite group $F$ generated by $S$ whose Cayley graph $\Cay(F,S)$ embeds in $\Gamma$.
	Then the following holds.
	\begin{enumerate}
		\item \label{enu: main theo - regular sc - infinite}
		The group $G_n(\Gamma)$ is infinite.
		\item \label{enu: main theo - regular sc - embedding}
		 Every connected component of $\Gamma$ embeds into $\Cay(G_n(\Gamma),S)$ via a label-preserving graph homomorphism.	
		\item \label{enu: main theo - regular sc - coarse embedding}
		 If $S$ is finite, $\Gamma$ is countable, and every connected component of $\Gamma$ is finite, then $\Gamma$ embeds and coarsely embeds in $G_n(\Gamma)$.
	\end{enumerate}
\end{theo}

Recall that a map $f: X_1\to X_2$ between two metric spaces is a coarse embedding if for all sequences of pairs of points $x_k,y_k\subset X_1\times X_1$ we have $|x_k-y_k|_{X_1}\to\infty\Longleftrightarrow |f(x_k)-f(y_k)|_{X_2}\to\infty$. 
In our theorem, we will consider $\Gamma$ as a metric space by writing it as disjoint union of its countably many connected components $\Gamma=\sqcup_{i\in\N}\Gamma_i$, and endowing it with the shortest-path metric on each component and declaring $d(x,y)=\diam(\Gamma_i)+\diam(\Gamma_j)+i+j$ for $x\in\Gamma_i$ and $y\in\Gamma_j$. 
(This is usually called the \emph{box-space metric}, and the constants $\diam(\Gamma_i)+\diam(\Gamma_j)+i+j$ we choose are irrelevant in the context of the notion of coarse equivalence.)

\medskip
The proof of \autoref{res: main theo - regular sc} is given in \autoref{sec: proofs}.
We illustrate this statement with a short proof that for an integer $n$ odd and large enough, $\vburn n$ is uncountable.

\begin{exam}[Thue-Morse sequence]
\label{exa: thue-morse}
	Let $S = \{a,b,t\}$.
	Let $u = u(a,b)$ be the infinite Thue-Morse sequence over the alphabet $\{a,b\}$.
	\begin{displaymath}
		u(a,b) = abbabaabbaababbabaababbaabbabaabbaababbaabbabaababba\dots
	\end{displaymath}
	It is the infinite word obtained from $a$ by iterating the substitution $a \to ab$ and $b\to ba$.
	It has the property that it does not contain any subword of the form $w^3$ \cite{Thue:1912wo}.
	For every $k \in \N$, we consider a subword $u_k = u_k(a,b)$ of length $k$ of $u$.
	For every $i \in \N$, we now consider a collection of words $r_i$ of the form 
	\begin{displaymath}
		r_i=tu_{100i+1}tu_{100i+2}t\dots tu_{100i+100}.
	\end{displaymath}
	\medskip
	Let $n_p$ be the critical exponent given by \autoref{res: main theo - regular sc} for $p = 3$.
	We fix an odd integer $n \geq n_p$.
	Let $I$ be a subset of $\N$.
	We define a graph $\Gamma_I$ as the disjoint union of circle graphs indexed by $\N$.
	The $i$-th circle graph of $\Gamma_I$ is labelled by $r_i$ if $i \in I$ and $r_i^n$ otherwise.
	Hence we have 
	\begin{displaymath}
		G(\Gamma_I)=\pres{a,b,t}{r_i, r_j^n,\ i \in I, j \in \N \setminus I}.
	\end{displaymath}
	Observe that, since $|u_i|\neq|u_j|$ for $i\neq j$ and since no $u_i$ contains any $t$'s, we have that no cyclically reduced word of the form $w^3$ can be read on $\Gamma_I$, unless $w^n$ labels a closed path. Furthermore, no piece contains two $t$'s, whence it is easy to check that $\Gamma_I$ satisfies the $C'(1/6)$-condition. Therefore, $\Gamma_I$ satisfies the $C'_n(1/6,3)$-assumption.
	Applying \autoref{res: main theo - regular sc} yields that each $G_n(\Gamma_I)$ is infinite.
	Of course the relations of the form $r_j^n$ for $j \in \N \setminus I$ are irrelevant for the definition of $G_n(\Gamma_I)$.
	Nevertheless keeping track of them in $\Gamma_I$ will help us to distinguish between all the $G_n(\Gamma_I)$.	
	
	\medskip
	We now prove that we obtain uncountably many isomorphism classes of $n$-periodic groups in this way.
	For every $I \subset \N$ we denote by $K_I$ the kernel of the canonical projection $\F(a,b,t) \twoheadrightarrow G_n(\Gamma_I)$.
	As $n>1$, \autoref{res: main theo - regular sc}~\ref{enu: main theo - regular sc - embedding} applied to $\Gamma_I$ implies that whenever $j\notin I$, then $r_j$ does not represent the identity in $G_n(\Gamma_I)$. 
	Consequently $K_I = K_J$ if and only if $I = J$.
	Now, given one (isomorphism type of) countable group $C$, there are only countably many kernels of homomorphisms $\F(a,b,t)\to C$. Hence, the collection $\{G_n(\Gamma_I):I\subseteq \N\}$ must contain uncountably many isomorphism types of groups.
\end{exam}

\autoref{exa: pride}, which is detailed at the end ot the article, explains why the assumption regarding powers in \autoref{def: graphical small cancellation - burnside}~\ref{enu: graphical small cancellation - burnside - power} is necessary.

%
\subsection{Small cancellation over free products}\label{section22}
%

We now describe an analogue of \autoref{res: main theo - classical} in the context of small cancellation over free products. We refer the reader to Theorem~\ref{res: main theo - product sc} for the full (graphical) statement.
We follow here the general exposition given in Lyndon-Schupp \cite{LynSch77}.
Let 
\begin{displaymath}
	F = F_1 \ast F_2 \ast \dots \ast F_m
\end{displaymath}
be a free product.
Recall that any non-trivial element $g\in F$ can be decomposed in a unique way as a product
\begin{displaymath}
	g = g_1g_2 \dots g_\ell
\end{displaymath}
where each $g_i$ is a non trivial element of some factor $F_k$, and no two consecutive $g_i$ and $g_{i+1}$ belong to the same factor.
Such a decomposition is called the \emph{normal form} of $g$.
The integer $\ell$ is the \emph{syllable length} of $g$ and we denote it by $\abs{g}_*$.
Let $g = g_1g_2 \dots g_r$ and $h = h_1h_2 \dots h_s$ be two elements of $F$ given by their normal forms.
The product $gh$ is called \emph{weakly reduced} if $g_rh_1$ is non trivial.
Note that one allows $g_r$ and $h_1$ to be in the same factor.
An element $g = g_1\dots g_\ell$ given by its normal form is \emph{cyclically reduced} if $\ell = 1$ or $g_1$ and $g_\ell$ are not in the same factor.
It is \emph{weakly cyclically reduced} if $\ell = 1$ or $g_\ell g_1$ is non-trivial.
A subset $R$ of $F$ is called \emph{symmetrized} if its elements are cyclically reduced and for every $r \in R$, all the weakly cyclic conjugates of $r$ and $r^{-1}$ belong to $R$.
In this context a \emph{piece} is an element $u\in F$ for which there exist $r_1 \neq r_2$ in $R$ which can be written in weakly reduced form as $r_1 = uv_1$ and $r_2 = uv_2$.

\begin{defi} 
\label{def: power-free small cancellation condition - free product}
	Let $\lambda \in(0,1)$.
	A symmetrized subset $R$ of $F$ satisfies the \emph{power-free $C'_*(\lambda)$ small cancellation condition} if
	\begin{enumerate}
		\item \label{enu: small cancellation condition - free product - pieces}
		For every $r \in R$, which can be written in a weakly reduced form as $r = uv$ where $u$ is a piece, then $\abs{u}_* < \lambda \abs{r}_*$.
		To avoid pathologies we also require that for every $r \in R$,  $\abs{r}_* > 1/\lambda$. 
		(For instance, without this assumption a set of the form $R = \{f_1,\dots, f_m\}$ where $f_i$ belongs to the free factor $F_i$ would satisfy the $C'_*(\lambda)$ condition.
		If each factor $F_i$ is cyclic, the quotient $G = F/ \normal R$ could be trivial.
		We do not want such a situation to fall in the context of small cancellation theory.)
		\item \label{enu: small cancellation condition - free product - torsion-free}
		No element of $R$ is a proper power.
	\end{enumerate}
\end{defi}

By analogy with the graphical case we now define the assumption needed to perform small cancellation over free products in a Burnside variety $\vburn n$.
Our assumption here is somehow stronger than its graphical analogue.
Indeed we do not allow relations to be a proper power.
Hence the definition does not depend on the Burnside exponent $n$. 
We actually cover a more general free product small cancellation condition that does allow relators that are proper powers in \autoref{res: main theo - product sc} but choose to here cover the following version for simplicity.
\begin{defi}
\label{def: small cancellation condition - free product - burnside}
	Let $p \in \N$.
	Let $\lambda \in (0,1)$.
	A symmetrized subset $R$ of $F$ satisfies the \emph{power-free $C'_*(\lambda,p)$ small cancellation condition} if the following holds.
	\begin{enumerate}
		\item $R$ satisfies the power-free $C'_*(\lambda)$-assumption
		\item For every $r \in R$, if there exists a cyclically reduced element $w$ with $\abs{w}_* > 1$ such that $r$ can be written in a weakly reduced form as $r = (w^k)v$, then $k \leq p$.
	\end{enumerate}

\end{defi}

\begin{theo}
\label{res: periodic quotient of sc groups - free product}
	Let $p \in \N^*$.
	There exists $n_p \in \N$ such that for every odd exponent $n \geq n_p$ the following holds.
	Let $F = F_1\ast \dots \ast F_m$ be a free product.
	Let $R$ be a symmetrized subset of $F$ satisfying the power-free $C'_*(1/6,p)$ condition.
	We assume that no factor in $F$ has even torsion.
	Then there exists a quotient $Q_n$ of $G = F/\normal R$ with the following properties.
	\begin{enumerate}
		\item \label{enu: periodic quotient of sc groups- free product - elliptic embeds}
		Every factor $F_k$ embeds in $Q_n$.
		\item \label{enu: periodic quotient of sc groups- free product - periodicity}
		For every $g \in Q_n$, if $g$ is not conjugate to an element in a factor $F_k$, then $g^n = 1$.
		\item \label{enu: periodic quotient of sc groups- free product- universal property}
		If every factor $F_k$ belongs to $\vburn n$, then $Q_n = G/G^n$.
		In particular $Q_n$ belongs to $\vburn n$.
	\end{enumerate}
\end{theo}

The proof of \autoref{res: periodic quotient of sc groups - free product} is given in \autoref{sec: proofs}.

%
\subsection{Decision problems and Rips' construction}
%

We now present a few applications of Theorems~\ref{res: main theo - regular sc} and \ref{res: periodic quotient of sc groups - free product}.
We start with the following analogue the Adian-Rabin theorem.

\begin{theo}
\label{res: unsolvability}
	There exists a critical exponent $n_0$ with the following property.
	Let $n \geq n_0$ be an odd integer that is not prime.
	Let $\mathcal P$ be a subset of $\vburn n$ for which there exist $G_+,G_- \in \vburn n$ which are finitely presented relative to $\vburn n$ such that 
	\begin{enumerate}
		\item the group $G_+$ belongs to $\mathcal P$,
		\item any $n$-periodic group containing $G_-$ as a subgroup does not belong to $\mathcal P$.
	\end{enumerate}
	Then there is no algorithm that takes as input a finite presentation relative to $\vburn n$ and determines whether the corresponding group $G \in \vburn n$ belongs to $\mathcal P$ or not.
\end{theo}

\begin{proof}
	Recall that all the presentations we consider here are \emph{relative to $\vburn n$}.
	Nevertheless, for simplicity, we omit in this proof the mention ``relative to $\vburn n$''.
	Let $n_p \in \N$ be the critical exponent given by \autoref{res: periodic quotient of sc groups - free product} with $p = 10$.
	Up to increasing the value of $n_p$, we can assume that $n_p \geq (665)^2$.
	Let $n \geq n_p$ be an odd exponent that is not prime.
	In particular $n$ decomposes as $n = pq$ where $p \geq 3$ is prime and $q$ has an odd divisor larger that $665$.
	According to \cite{Kharlampovich:1995ea} there exists a finitely presented group $H \in \vburn n$ whose word problem is unsolvable.
	We write $S$ for the corresponding generating set of $H$.

	\medskip
	Let $\mathcal P$ be a class of groups satisfying the assumptions of the theorem.
	Let $G_+$ be a finitely presented group in $\mathcal P$.
	Let $G_-$ be a finitely presented group such that any group in $\vburn n$ containing $G_-$ is not in $\mathcal P$.
	We write $S_-$ and $S_+$ for the generating sets of the presentation defining $G_-$ and $G_+$ respectively.
	
	\medskip
	To each word $w$ over the alphabet $S \cup S^{-1}$ we are going to produce a finitely presented test group $L_n(w)$ in $\vburn n$ such that $L_n(w)$ belongs to $\mathcal P$ if and only if $w$ represents the trivial element.
	The construction goes as follows.
	We consider $C$, $C_1$ and $C_2$, three distinct copies of $\Z/n\Z$.
	We write $t$, $x_1$ and $x_2$ for a generator of $C$, $C_1$ and $C_2$ respectively.
	We consider the following free product.
	\begin{displaymath}
		L_0 = H \ast G_+ \ast G_- \ast C \ast C_1 \ast C_2
	\end{displaymath}

	\medskip
	Let $w$ be a word over $S \cup S^{-1}$. 
	We now construct a quotient $L(w)$ of $L_0$.
	Let $h$ the element of $H$ represented by $w$.
	We write $g_i$ for the commutator $g_i = [h,x_i]$.	
	Let $u(a,b)$ be the infinite Thue-Morse sequence over the alphabet $\{a,b\}$ (see \autoref{exa: thue-morse}).
	For every $k \in \N$, $u_k(a,b)$ is a subword of length $k$ of $u(a,b)$.
	The group $L(w)$ is the quotient of $L_0$ characterized by the following families of relations: for every $ s \in S \cup S_- \cup \{t, x_1, x_2\}$,
	\begin{equation}
	\label{enu: unsolvability - relation}
		s = u_{k_{s,1}}(g_1,g_2)tu_{k_{s,2}}(g_1,g_2)t^{-1} u_{k_{s,3}}(g_1,g_2)t\cdots u_{k_{s,2p_s}}(g_1,g_2)t^{-1}.
	\end{equation}
	where the sequence $(k_{s,j})$ will be made precise later.
	We now define $L_n(w)$ as the $n$-periodic quotient of $L$, i.e. $L_n(w) = L(w)/L(w)^n$, where $L(w)^n$ is the (normal) subgroup of $L(w)$ generated by the $n$-th power of all its elements.
	
	\medskip
 	If $w$ represents the trivial element in $H$, i.e. if $h=1$, then $g_1$ and $g_2$ are trivial as well.
	Hence the relations (\ref{enu: unsolvability - relation}) force $H$, $G_-$, $C$, $C_1$ and $C_2$ to have a trivial image in $L(w)$.
	Consequently $L(w)$ is isomorphic to $G_+$.
	As $G_+$ is already $n$-periodic, $L_n(w)$ is isomorphic to $G_+$.
	In particular $L_n(w)$ belongs to $\mathcal P$.
	Assume now that $w$ does not represents the trivial element in $H$.
	In other words, $h \neq 1$.
	Then $g_1$ and $g_2$ are two non-trivial elements of $H\ast C_1$ and $H\ast C_2$ respectively.	
	One can choose the sequences $(k_{s,j})$ in such a way that the relations (\ref{enu: unsolvability - relation}) defining $L(w)$ satisfy the power-free small cancellation assumption $C'_*(1/6,10)$.
	It follows from \autoref{res: periodic quotient of sc groups - free product} that $G_-$ embeds in $L_n(w)$.
	According to our assumption on $G_-$, the group $L_n(w)$ cannot belong to $\mathcal P$. 
	Hence the group $L_n(w)$ has the announced property.
	
	\medskip
	Note that the sequence $(k_{s,j})$ can be chosen independently of $w$.
	Hence the presentation of $L(w)$ can be algorithmically computed from the respective presentations of $G_\pm$, $H$, $C$, $C_1$ and $C_2$.
	It follows from this discussion that deciding whether or not a finitely presented group of $\vburn n$ belongs to $\mathcal P$ is equivalent to solving the word problem in $H$.
	The latter problem being unsolvable, so is the former one.
\end{proof}

Recall that free Burnside groups of sufficiently large odd exponents are non-amenable \cite{Adian:1982wj}.
Hence we get the following statement.

\begin{coro}
\label{res: unsolvability - applications}
	There exists a critical exponent $n_0$ with the following property.
	Let $n \geq n_0$ be an odd integer that is not prime.
	Let $\mathcal P$ be one of the following property: being trivial, finite, cyclic, abelian, nilpotent, solvable, amenable.
	There is no algorithm to determine whether a group $G$ of $\vburn n$ given by a finite presentation relative to $\vburn n$ has $\mathcal P$.
\end{coro}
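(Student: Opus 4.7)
The plan is to apply Theorem~\ref{res: unsolvability} once for each of the seven listed properties $\mathcal P$, using the same pair of test groups $G_+$ and $G_-$ in every case. Take $G_+$ to be the trivial group, which is finitely presented relative to $\vburn n$ and manifestly satisfies each of the properties ``trivial, finite, cyclic, abelian, nilpotent, solvable, amenable''.

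For $G_-$ take the rank-$2$ free Burnside group $\burn{\{a,b\}}n$, which is finitely presented relative to $\vburn n$ by the two-generator presentation with no auxiliary relators. Up to enlarging the critical exponent $n_0$ supplied by Theorem~\ref{res: unsolvability} so that $n_0 \geq 665$, the theorems of Novikov--Adian and Adian guarantee that, for every odd $n\geq n_0$, the group $\burn{\{a,b\}}n$ is infinite and non-amenable; consequently it is also non-trivial, non-cyclic, non-abelian, non-nilpotent and non-solvable. Each of the seven listed properties is inherited by subgroups, so any $n$-periodic group containing $\burn{\{a,b\}}n$ as a subgroup fails each of them.

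With $G_+$ and $G_-$ as above, the hypotheses of Theorem~\ref{res: unsolvability} are satisfied for every one of the seven properties, and each application of the theorem yields the corresponding undecidability. There is essentially no obstacle here once Theorem~\ref{res: unsolvability} is available: the entire reduction is a short verification of its hypotheses, the only non-trivial input being the classical deep fact that $\burn{\{a,b\}}n$ is infinite and non-amenable for large odd~$n$.
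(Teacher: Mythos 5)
Your proof is correct and gives the natural verification of the hypotheses of Theorem~\ref{res: unsolvability}: taking $G_+$ trivial and $G_-=\burn{\{a,b\}}n$, noting that all seven properties are hereditary, and that Adian's theorems make $\burn{\{a,b\}}n$ infinite and non-amenable (hence failing all seven) for odd $n\geq 665$ — a threshold already subsumed by the $n_0\geq(665)^2$ built into the proof of Theorem~\ref{res: unsolvability}. The paper leaves this verification to the reader, and your argument is exactly the expected one.
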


The following observation shows that the Rips construction \cite{Rips:1982coa}, a method for exhibiting pathologies among $C'(\lambda)$-small cancellation groups (and thus, in particular, among hyperbolic groups) can also be applied to our class of small cancellation groups in the Burnside variety.

\begin{theo}[Rips construction]
\label{res: rips construction}
	Let $n \in \N$ and $\lambda \in (0,1]$.
	Let $Q$ be a finitely generated group in $\vburn n$.
	There exists a graph $\Gamma$ labelled by a finite set $S$ satisfying the $C'_n(\lambda,3)$-condition with the following properties
	\begin{enumerate}
		\item $G_n(\Gamma)$ maps onto $Q$ and the kernel of this projection is finitely generated.
		\item If $Q$ is finitely presented relative to $\vburn n$, then so is $G_n(\Gamma)$.
	\end{enumerate}
\end{theo}

\begin{proof}
	Let $\pres{a_1, \dots, a_r}R$ be a presentation relative to $\vburn n$ of $Q$.
	Up to conjugating the elements of $R$ we can assume that their length is a least $10^{10}$.
	As usual we write $u = u(x,y)$ for the infinite Thue-Morse word over the alphabet $\{x,y\}$.
	For every $k \in \N$, $u_k = u_k(x,y)$ is a subword of length $k$ of $u$ (see \autoref{exa: thue-morse}).
	We now build a graph $\Gamma$ labelled by $S = \{a_1, \dots, a_r, x,y,t\}$ as follows.
	For each $i \in \intvald1r$, for each $u \in \{x,y,t\}$ for each $\epsilon \in \{ \pm 1\}$ we associate a loop in $\Gamma$ labelled by 
	\begin{equation}
	\label{eqn: rips - conjugation relation}
		a_i^{\epsilon}ua_i^{-\epsilon}tu_{k_1}tu_{k_2}\dots tu_{k_{s_i}}.
	\end{equation}
	Let $r  = r_1\dots r_m$ be a relation of $R$ written in the alphabet $\{a_1, \dots, a_r\}$.
	For every such relation we add a new loop to $\Gamma$ labelled by 
	\begin{equation}
	\label{eqn: rips - onto relation}
		r_1u_{k_1}r_2u_{k_2}\dots r_mu_{k_m}.
	\end{equation}
	In this construction the sequences $(k_j)$ implicitly depends on the relation we are considering.
	Recall that the Thue-Morse sequence does not contain any cube.
	Hence one can choose the indices $k_j$ such that the graph $\Gamma$ obtained in this way satisfies the $C'_n(\lambda,3)$-assumption.
	According to the relations (\ref{eqn: rips - conjugation relation}) the subgroup $K$ generated by $x$, $y$ and $t$ is normal in $G_n(\Gamma)$.
	By the relations (\ref{eqn: rips - onto relation}) the quotient of $G_n(\Gamma)$ by $K$ is exactly $Q$.
	Note that, if $R$ is finite, then $\Gamma$ is a finite union of disjoint loops.
	Hence $G_n(\Gamma)$ is finitely presented relative to $\vburn n$.
\end{proof}

\begin{rema}
	In our proof the kernel $K$ has rank at most $3$.
	We made this choice to keep the exposition easy.
	With some additional work one should be able to achieve rank $2$.
	
	\medskip
	For the moment we have not found relevant applications of this construction that cannot be recovered by using already existing technologies.
	Consider for instance the following construction.
	Let $n$ be an odd integer and $Q \in \vburn n$.
	Applying the \emph{standard} Rips construction, one can find a short exact sequence 
	\begin{displaymath}
		1 \to K \to G \to Q \to 1
	\end{displaymath}
	where $G$ is a \emph{torsion-free hyperbolic} group and $K$ a finitely generated group.
	Then applying Ol'shanski\u\i\ \cite{Olc91} one can consider the quotient $G/G^{kn}$ where $k$ is a large odd integer and $G^{kn}$ the (normal) subgroup of $G$ generated by the $kn$-th power of all its elements.
	This provides a short exact sequence
	\begin{displaymath}
		1 \to K_k \to G/G^{kn} \to Q \to 1
	\end{displaymath}
	where $G/G^{kn}$ is an infinite periodic group.
	In this way one can exhibit for instance a periodic group with solvable word problem but unsolvable generalized word problem.
	Nevertheless the construction given by \autoref{res: rips construction} has the following virtue.
	Contrary to the aforementioned strategy, the group $G_n(\Gamma)$ that we produce has the same exponent as the group $Q$ we started with.
\end{rema}

%
\subsection{Gromov monsters}
%

Our main theorem also enables the transposition of major recent results from analytic group theory to the variety $\vburn n$.

\begin{theo}[Gromov monster]
\label{res: Gromov monster}
    There exists $n_0$ such that for every odd exponent $n \geq n_0$, there exists a group $G \in \vburn n$ generated by a finite set $S$ whose Cayley graph with respect to $S$ contains an embedded (and, moreover, coarsely embedded) expander graph.
	In particular, there exists a finitely generated $n$-periodic group that does not have Yu's property A, that does not coarsely embed into a Hilbert space and that does not satisfy the Baum-Connes conjecture with coefficients.
\end{theo}

\begin{proof}
	Let $\Gamma = (V,E)$ be an expander graph without simple closed paths of length 1 or 2.
	In particular $\Gamma$ is the disjoint union of a collection of finite graphs $(\Gamma_k)$ with uniformly bounded valency.
	In addition we assume that	
	\begin{itemize}
		\item the valence of any vertex is at least $3$;
		\item the girth of $\Gamma_k$ tends to infinity as $k$ approaches infinity;
		\item the ratio $\diam(\Gamma_k)/ \girth(\Gamma_k)$ is uniformly bounded from above.
	\end{itemize}
	Such a graph can be obtained as follows \cite{Lubotzky:2010cg}: fix the matrices 
	\begin{displaymath}
		M_1 = \left( \begin{array}{cc}
			1 & 2 \\
			0 & 1
		\end{array}\right)
		\quad \text{and} \quad
		M_2 = \left( \begin{array}{cc}
			1 & 0 \\
			2 & 1
		\end{array}\right)
	\end{displaymath}
	and consider the sequence of Cayley graphs of the groups $SL_2(\Z/p\Z)$, where $p$ runs over all primes, with respect to the images of $M_1$ and $M_2$.
	We explain how to endow $\Gamma$ with a labelling that satisfies, for every $n\in \N$, the $C_n'(1/6, 2)$ condition. 
	Then it remains to apply \autoref{res: main theo - regular sc}.
	
	\paragraph{Labelling the graph.}
	The ideas of this paragraph come from Arzhantseva, Cashen, Gruber, and Hume \cite[Theorem~6.8]{Arzhantseva:2016uk}.
	We recall them quickly for the convenience of the reader.
	Up to replacing $(\Gamma_k)$ by a subsequence, there exists finite set $S_1$ and a labelling of $\ell_1 \colon E \to S_1\sqcup S_1^{-1}$ satisfying the  $C'(1/6)$ small cancellation condition, such that $\Gamma$ has no non-trivial label-preserving automorphism. This is proven by Osajda in \cite[Theorem~2.7]{Osajda:2014uk}. 
	However some path may be labelled by some very large powers.
	Since $\Gamma$ has uniformly bounded degree, it follows from Alon, Grytczuk, Hałuszczak and Riordan \cite{Alon:2002bv} that there exists a finite set $S_2$ and another labelling of $\ell_2 \colon E \to S_2\sqcup S_2^{-1}$ such that no word labelling an embedded path of $\Gamma$ contains a square. 
	One checks easily that for every $n \in \N$, the product labelling $\ell \colon E \to (S_1\sqcup S_1^{-1}) \times (S_2\sqcup S_2^{-1})$ sending an edge $e$ to $(\ell_1(e), \ell_2(e))$ satisfies $C'_n(1/6,2)$ condition by observing that if $(w_1,w_2)$ is a piece for $\ell$, then $w_1$ is a piece for $\ell_1$, and if $(w_1,w_2)$ is square, then $w_2$ is a square.
\end{proof}

\subsection{A new proof of SQ-universality}	

\begin{theo}[SQ-universality]\label{theo:sq}
	There exists $n_0 \in \N$ such that for every odd exponent $n \geq n_0$, for every set $S$ containing at least two elements, the free Burnside group $\burn Sn$ is SQ-univeral in $\vburn n$.
	\end{theo}

The following proof builds on a construction used by Gruber in \cite[Example~1.13]{Gruber:2015vc} to show that any countable group embeds in a group of rank $2$ defined by a $C'(1/6)$-labelled graph.

\begin{proof}
Let $n_p$ be the critical exponent given by \autoref{res: main theo - regular sc} for $p = 6$.
Let $n \geq n_p$ be an odd integer. 
Let $A$ be a countable group in $\vburn n$.
We fix an epimorphism $\F(U) \twoheadrightarrow A$ where $U$ is an infinite countable set. 
We will define below a graph $\Gamma_0$ labelled by two letters with the following properties. (Of course it is sufficient to consider the case $|S|=2$, as any free Burnside group of rank greater than two surjects onto one of rank two.)
\begin{enumerate}
	\item The fundamental group of $\Gamma_0$ is isomorphic to the free group on $U$ (i.e.\ has countably infinite rank).
	\item The labelling satisfies the $C'_1(1/6,6)$-assumption. 	\item The label-preserving automorphism group of $\Gamma_0$ is trivial.
\end{enumerate}
Let $K$ be the kernel of the epimorphism $\F(U) \twoheadrightarrow A$. Let $\Gamma$ be the cover of $\Gamma_0$ corresponding to $K$, i.e. the quotient of the universal cover of $\Gamma_0$ by $K$. The labelling on $\Gamma_0$ induces a labelling on $\Gamma$. The labelling of $\Gamma$ satisfies the $C_n'(1/6,6)$-assumption: the $C'(1/6)$-condition is argued in \cite[Remark~1.12]{Gruber:2015vc}. The labels of paths in $\Gamma$ are exactly the labels of paths in $\Gamma_0$. Hence, if for a cyclically reduced word $w$ we have $w^6$ as label of a path in $\Gamma$, then $w$ labels a closed path in $\Gamma_0$. Since $A$ is $n$-periodic, we deduce that $w^n$ labels a closed path in $\Gamma$.
Hence, we may apply \autoref{res: main theo - regular sc}. 

Observe that $A$ embeds in $G(\Gamma)$ as subgroup preserving and acting freely on an embedded copy $\overline \Gamma$ in $\Cay(G(\Gamma),S)$ of the labelled graph $\Gamma$ \cite[Example~1.13]{Gruber:2015vc}. 
By \autoref{res: main theo - regular sc}, the canonical projection $f:\Cay(G(\Gamma),S)\to\Cay(G_n(\Gamma),S)$ restricted to $\overline\Gamma$ is an isomorphism. If $\pi$ denotes the epimorphism $G(\Gamma)\to G_n(\Gamma)$, then, for any $g\in G(\Gamma)$ and any vertex $v$ of $\Cay(G(\Gamma),S)$, we have $f(gv)=\pi(g)f(v)$. Thus, if $g\in A$ satisfies $\pi(g)=1$, then, for every $v\in\overline\Gamma$: $f(gv)=\pi(g)f(v)=f(v)$, whence $gv=v$ and $g=1$. Thus $A$ is a subgroup of $G_n(\Gamma)$.

\medskip

\noindent {\bf Construction of $\Gamma_0$.} We now define a graph $\Gamma_0$ labelled by $\{a,b\}$ satisfying the $C'_1(1/6,6)$-small cancellation condition, with infinite rank fundamental group and no non-trivial automorphism. 
In fact, no simple path in $\Gamma_0$ will be labelled by a 6-th power, and any non-simple path labelled by a proper power will be closed.

\medskip
We first define a sequence $u_k$ of subwords of the Thue-Morse sequence with $3(k-1)<\abs{u_k}\leq 3k$, and $u_k$ starts and ends with the letter $b$. Take any subword $\hat u_k$ of length $3(k-1)+1$ of the Thue-Morse sequence that starts with $b$. 
Now, since $a^3$ is not a subword of the Thue-Morse sequence, extending $\hat u_k$ by at most two letters yields a word $u_k$ as desired.

\medskip
Fix $N \geq 10^{10}$.
For every $i \in \N$, $i\geq 1$,  we let
\begin{displaymath}
	r_i=a^3u_{iN+1}a^3u_{iN+2}\dots a^3u_{iN+N}.
\end{displaymath}
These are cyclically reduced words by construction. 
Consider the collection $R= \set{r_i}{i>0}$, and consider $\hat \Gamma_0$ the disjoint union of cycle graphs $\Lambda_i$ labelled by $r_i$. Note that subwords of cyclic conjugates of elements of $R$ of the form $a^3$ occur exactly in the places written in the definition of the $r_i$ (i.e.\ they do not occur in any $u_k$ and contain no letters of any $u_k$). 
Since the $u_k$ have pairwise distinct lengths, this implies that any reduced piece contains at most one subpath labelled by $a^3$. Let $w$ be a piece such that $w$ is a subword of a cyclic conjugate of $r_i$. Then, by our observation, we have that $w$ is subword of a word of the form $a^2u_ka^3u_\ell a^2$ for suitable $k,\ell$ and, in particular $|w|\leq 8+ 6(iN+N)=6(i+1)N+8$. 
On the other hand, we have $\abs{r_i} > 3iN^2+3N$. 
We deduce $\abs w<\abs {r_k}/6 - 2$ because $N\geq 10^{10}$.

\medskip
On $\Lambda_i$, we can read the word $r_i$ starting from some base vertex. Let $v_i$ be the vertex separating $a$ and $a^2$ in the first occurrence of $a^3$ in $r_i$ and $w_i$ be the vertex separating $a$ and $a^2$ in the fourth occurrence of $a^3$. 
We now build a connected graph $\Gamma_0$ with a reduced labelling by connecting $v_i$ to $w_{i+1}$ by a line graph of length 3 labelled by the word $ba^{-1}b$, for each $i\geq 1$. 
We call $\Gamma_0$ the resulting graph.

\medskip
A word is \emph{positive} if any letter occuring in it occurs with positive exponent. Observe that, when checking pieces in $\Gamma_0$, we only need to check paths labelled by positive words, because any simple closed path is (up to inversion) labelled by a positive word.
Now the positive words on $\Gamma_0$ are exactly the word $a$, the positive words read on $\Lambda_i$ for $i\in \N$, and the words obtained from positive words of $\Lambda_i$ by possibly appending a $b$ at the beginning or at the end. 
Hence, the (relevant) pieces have increased in length by at most 2, and the $C'(1/6)$-condition is still satisfied.

\medskip
Now, by construction, no word read on a simple path in any of the $\Lambda_i$ for $i\geq 1$ is a 4-th power. 
We first argue that $\Gamma_0$ does not contain any \emph{simple} path $\pi$ labelled by a 6-th power: a positive word on $\Gamma_0$ corresponds to a positive word on some $\Lambda_i$ for $i\geq 1$ with possibly a $b$ appended in the beginning or at the end. Hence a positive word cannot contain a 6-th power. Let $\pi$ be labelled by a word that is not positive and whose inverse is not positive and that is a proper power. Then the label of $\pi$ contains, up to inversion, a subword of the form $a^{-1}wa^{-1}$, where $w$ is a positive word. The two occurrences of $a^{-1}$ correspond to the two line graphs attached to the same $\Lambda_i$ for $i\geq 2$. Thus, $w$ is of the form $bvb$, where $v$ is one of the two labels of simple paths from $v_i$ to $w_i$ in $\Lambda_i$. We already observed that such a path labelled by $v$ is not a piece, i.e.\ there is a unique path in $\Gamma$ labelled by $v$. Hence, the label of $\pi$ can contain at most one occurrence of $v$, contradicting that it is a proper power.

\medskip
If a \emph{non-simple} path is labelled by a proper power of a cyclically reduced word $w$, then we argue that $w$ labels a closed path.
Suppose that $\pi$ is a non-simple path whose label is $w^p$ for $p>1$ and $w$ cyclically reduced. 
As $w$ is cyclically reduced, $\pi$ is reduced, i.e.\ has no back-tracking. Thus, $\pi$ contains a subpath $\gamma$ that is a simple closed path. 
Now, because $p>1$, there exists a subword $u$ of $w^{p-1}$ labelling a subpath of $\gamma$ such that $|u|\geq |\gamma|/2$. 
(After all, the label of $\gamma$ is a subword of $w^p$.) 
But this and the small cancellation condition imply that $u$ cannot be a piece. Since the property of being a piece goes to subwords, we deduce that $w^{p-1}$ is not a piece. 
Hence, as $\Gamma_0$ does not admit any non-trivial label-preserving automorphisms by construction, any two paths labelled by $w^{p-1}$ must start from the same vertex. As $\Gamma_0$ contains a path labelled by $w^p$, we conclude that $w$ labels a closed path.
\end{proof}


%
\section{Hyperbolic geometry}\label{section:hyperbolic}
%
\label{sec: hyperbolic}

Let $X$ be a geodesic metric space.
Given two points $x,x' \in X$ we write $\dist[X] x{x'}$, or simply $\dist x{x'}$, for the distance between them.
The \emph{Gromov product} of three points $x,y,z \in X$  is defined by 
\begin{displaymath}
	\gro xyz = \frac 12 \left\{  \dist xz + \dist yz - \dist xy \right\}.
\end{displaymath}
For the remainder of this section, we assume that the space $X$ is \emph{$\delta$-hyperbolic}, i.e. for every $x,y,z,t \in X$,
\begin{equation}
\label{eqn: hyperbolicity condition 1}
	\gro xzt \geq \min\left\{ \gro xyt, \gro yzt \right\} - \delta.
\end{equation}
In this article we always assume the hyperbolicity constant $\delta$ is positive.
We write $\partial X$ for the Gromov boundary of $X$.
Note that we did not assume the space $X$ to be proper, thus we use the boundary defined with sequences converging at infinity  \cite[Chapitre~2, Définition~1.1]{CooDelPap90}.
A major fact of hyperbolic geometry is the stability of quasi-geodesics that we will use in the following form.

\begin{prop}{\rm \cite[Corollaries 2.6 and 2.7]{Coulon:2014fr}} \quad
\label{res: stability (1,l)-quasi-geodesic}
	Let $l_0 \geq 0$.
	There exists $L=L(l_0,\delta)$ which only depends on $\delta$ and $l_0$ with the following properties.
	Let $l \in \intval 0{l_0}$.
	Let $\gamma \colon I \rightarrow X$ be an $L$-local $(1,l)$-quasi-geodesic.
	\begin{enumerate}
		\item The path $\gamma$ is a (global) $(2,l)$-quasi-geodesic.
		\item For every $t,t',s \in I$ with $t \leq s \leq t'$, we have $\gro{\gamma(t)}{\gamma(t')}{\gamma(s)} \leq l/2 + 5 \delta$.
		\item For every $x \in X$, for every $y,y'$ lying on $\gamma$, we have $d(x,\gamma) \leq \gro y{y'}x + l + 8 \delta$.
		\item The Hausdorff distance between $\gamma$ and any other $L$-local $(1,l)$-quasi-geodesic joining the same endpoints (possibly in $\partial X$) is at most $2l+5\delta$.
	\end{enumerate}
\end{prop}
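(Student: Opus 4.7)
The plan is to first prove (1), the local-to-global principle, and then derive (2)–(4) as standard consequences of hyperbolicity combined with the global $(2,l)$-quasi-geodesic property. I would choose the threshold $L = L(l_0,\delta)$ large enough — concretely of the order of $100(\delta + l_0)$, with the constant adjusted at the end — so that any subpath of $\gamma$ of length at most $L$ is a genuine $(1,l)$-quasi-geodesic, and so that certain Gromov-product bounds (see below) force progress along $\gamma$ at the scale $L$.

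For (1), the classical strategy is as follows. Fix $t_0 < t_1$ in $I$ and consider the function $\varphi(s) = \dist{\gamma(t_0)}{\gamma(s)}$ on $[t_0,t_1]$. Using the $(1,l)$-quasi-geodesic property on windows of length $\leq L$ together with the four-point hyperbolicity inequality~\eqref{eqn: hyperbolicity condition 1}, one shows by induction on $t_1 - t_0$ that whenever $\varphi$ fails to grow linearly, the Gromov product $\gro{\gamma(t_0)}{\gamma(t_1)}{\gamma(s)}$ stays bounded by a constant depending only on $\delta$ and $l_0$, which in turn forces $\dist{\gamma(t_0)}{\gamma(t_1)} \geq (t_1 - t_0)/2 - l$. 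The main obstacle here is the usual one in local-to-global arguments: setting up a clean induction that keeps the constants uniform in $l \in [0,l_0]$. One cuts $[t_0,t_1]$ into overlapping windows of length less than $L$, applies hyperbolicity to quadruples of endpoints of consecutive windows, and uses the choice of $L$ to absorb the accumulated error.

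Once (1) is in hand, statement (2) follows because for a global $(2,l)$-quasi-geodesic $\gamma$ joining $\gamma(t)$ to $\gamma(t')$, the equality
\[
\gro{\gamma(t)}{\gamma(t')}{\gamma(s)} = \tfrac{1}{2}\bigl(\dist{\gamma(t)}{\gamma(s)} + \dist{\gamma(s)}{\gamma(t')} - \dist{\gamma(t)}{\gamma(t')}\bigr)
\]
combined with the sharper local bound $\dist{\gamma(t)}{\gamma(s)} + \dist{\gamma(s)}{\gamma(t')} \leq (t'-t) + l$ (valid on short windows because $\gamma$ is a true $(1,l)$-quasi-geodesic there) together with a hyperbolicity-based comparison to a geodesic $[\gamma(t),\gamma(t')]$ yields the $l/2 + 5\delta$ bound; one argues in two cases depending on whether $s - t$ and $t' - s$ both stay below $L$ or not, using (1) to reduce the long case to the short one. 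Statement (3) is then a direct consequence of (2): given $x$ and $y,y'$ on $\gamma$, the four-point inequality applied to $x,y,y',\gamma(s)$ for the point $\gamma(s)$ realizing the minimum of $\dist{x}{\gamma(\cdot)}$ between $y$ and $y'$, combined with the Gromov product bound from (2), produces the inequality $d(x,\gamma) \leq \gro{y}{y'}{x} + l + 8\delta$.

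For (4), let $\gamma$ and $\gamma'$ be two $L$-local $(1,l)$-quasi-geodesics with the same endpoints (possibly at infinity, in which case one takes limits of finite subpaths). Applying (3) with $y,y'$ equal to the endpoints of $\gamma'$ shows $d(x,\gamma) \leq \gro{y}{y'}{x} + l + 8\delta$ for every $x \in \gamma'$, and the same Gromov product is controlled by (2) applied to $\gamma'$, yielding $\gro{y}{y'}{x} \leq l/2 + 5\delta$. Combining these gives a one-sided bound, and swapping the roles of $\gamma$ and $\gamma'$ produces the symmetric bound; tightening the constants by absorbing redundancies then gives the claimed $2l + 5\delta$. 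The main difficulty throughout is bookkeeping of the additive constants so that they depend only on $\delta$ and $l_0$ (not on $l$, $\gamma$, or the length of $I$), which is exactly what is needed for the downstream uniform estimates in the acylindricity arguments of Section~\ref{sec: sc to acylindrical}.
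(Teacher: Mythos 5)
This proposition is not proved in the paper at all: it is imported verbatim from \cite[Corollaries 2.6 and 2.7]{Coulon:2014fr}, as the label in the statement indicates. So there is no ``paper's own proof'' to match your argument against, and the real question is whether your from-scratch sketch is sound.

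Your overall plan --- prove the local-to-global statement (1) first, then deduce (2)--(4) --- is the standard shape for such results and is consistent with how the cited source proceeds. However, there are two concrete problems. First, in your derivation of (2) you write the short-window bound as $\dist{\gamma(t)}{\gamma(s)} + \dist{\gamma(s)}{\gamma(t')} \leq (t'-t) + l$. Under the convention used in \cite{Coulon:2014fr} (the one that makes the constant $l/2 + 5\delta$ come out), a $(1,l)$-quasi-geodesic parametrized by arc length satisfies $\dist{\gamma(s)}{\gamma(t)} \leq \abs{s-t}$, so the sum is $\leq (t'-t)$ with no extra $l$; that is where the $l/2$ (and not $3l/2$) comes from after subtracting the lower bound $\dist{\gamma(t)}{\gamma(t')} \geq (t'-t)-l$. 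As written, your sum-inequality would produce $3l/2$, which does not match the claim.

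Second, and more seriously, your derivation of (4) from (2) and (3) does not yield the stated constant. Applying (3) to $\gamma$ and (2) to $\gamma'$ gives, for a point $x$ on $\gamma'$,
\begin{displaymath}
 d(x,\gamma) \leq \gro{y}{y'}{x} + l + 8\delta \leq \bigl(l/2 + 5\delta\bigr) + l + 8\delta = \tfrac{3l}{2} + 13\delta,
\end{displaymath}
which is strictly weaker than $2l + 5\delta$ whenever $l < 16\delta$ --- in particular, for small $l$, which is precisely the regime that matters downstream. You flag this yourself (``tightening the constants by absorbing redundancies''), but that phrase is a placeholder, not an argument. The cited proof obtains $2l+5\delta$ by a more direct route: compare each local quasi-geodesic to an actual geodesic joining the same endpoints (a stability statement with its own constants) and then combine, rather than chaining (2) and (3). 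If you want to recover the exact constant, you would need to follow that path; as it stands, your route for (4) has a genuine numerical gap.
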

		
\paragraph{Remark.}
Using a rescaling argument, one can see that the best value for the parameter $L=L(l,\delta)$ satisfies the following property: for all $l,\delta \geq 0$ and $\lambda >0$, $L(\lambda l, \lambda \delta) = \lambda L(l,\delta)$.
This allows us to define a parameter $L_S$ that will be use all the way through.
	
\begin{defi}
\label{def: constant LS}
	Let $L(l,\delta)$ be the best value for the parameter $L=L(l,\delta)$ given in \autoref{res: stability (1,l)-quasi-geodesic}.
	We denote by $L_S$  a number larger than $500$ such that $L(10^5\delta,\delta) \leq L_S\delta$.
\end{defi}

%
\subsection{Group action on a hyperbolic space.}
%

Let $x$ be a point of $X$.
Recall that an isometry $g$ of $X$  is either \emph{elliptic}, i.e. the orbit $\langle g \rangle \cdot x$ is bounded, \emph{loxodromic}, i.e. the map from $\Z$ to $X$ sending $m$ to $g^m x$ is a quasi-isometric embedding or \emph{parabolic}, i.e. it is neither loxodromic or elliptic \cite[Chapitre~9, Théorème~2.1]{CooDelPap90}.
Note that these definitions do not depend on the point $x$.

\paragraph{WPD and acylindrical action.}
Let $G$ be a group acting by isometries on $X$.
For our purpose we need to require some properness for this action.
We will use two notions: weak proper discontinuity and acylindricity.

\begin{defi}[WPD action \cite{Bestvina:2002dr}]
\label{def: wpd action}
	The action of $G$ on $X$ is \emph{weakly properly discontinuous} if for every loxodromic element $g \in G$, the following holds:
	for every $x \in X$, for every $d \geq 0$, there exists $m \in \N$ such that the set of elements $u \in G$ satisfying $\dist{ux}x \leq d$ and $\dist{ug^mx}{g^mx}$ is finite.
\end{defi}

We now recall the definition of an acylindrical action on a metric space.
For our purpose we need to keep in mind the parameters that appear in the definition.

\begin{defi}[Acylindrical action]
\label{def: acylindricity}
	Let $N,L,d \in \R_+^*$.
	The group $G$ acts \emph{$(d,L,N)$-acylindrically} on $X$ if the following holds:
	for every $x, y \in X$ with $\dist xy \geq L$, the number of elements $u \in G$ satisfying $\dist{ux}x \leq d$ and $\dist{uy}y \leq d$ is bounded above by $N$.
	The group $G$ acts \emph{acylindrically} on $X$ if for every $d >0$ there exist $N, L > 0$ such that $G$ acts $(d,L,N)$-acylindrically on $X$.
\end{defi}

If the action of $G$ on $X$ is acylindrical, then it is also WPD.
Since $X$ is a hyperbolic space, one can decide whether an action is acylindrical by looking at a single value of $d$.

\begin{prop}[Dahmani-Guirardel-Osin {\cite[Proposition~5.31]{Dahmani:2017ef}}]
\label{res: acylindrical action vs hyp space}
	The action of $G$ on $X$ is acylindrical if and only if there exist $N,L >0$ such that the action is $(100\delta, L, N)$-acylindrical.
\end{prop}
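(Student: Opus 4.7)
The forward direction is immediate from the definition: an acylindrical action admits bounds $L, N$ for every $d > 0$, in particular for $d = 100\delta$.

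For the converse, assume $(100\delta, L, N)$-acylindricity. Given $d > 0$, we may assume $d \geq 100\delta$ (otherwise the hypothesis already gives the conclusion). The plan is to use $\delta$-hyperbolicity to ``reduce'' control at displacement $d$ back to control at displacement $100\delta$, via a pigeonhole argument along a geodesic.

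First, I would fix a sufficiently large constant $K$ (concretely $K = 10$ would suffice) and set $L' := L + 2d + K\delta$. Take $x, y \in X$ with $\dist xy \geq L'$ and choose points $p, q$ on a geodesic $[x,y]$ at distance $d + K\delta/2$ from $x$ and from $y$ respectively, so that $\dist pq \geq L$. Let $u_1, \dots, u_k \in G$ be distinct elements with $\dist{u_ix}x, \dist{u_iy}y \leq d$. A thin-quadrilateral estimate applied to $(x, y, u_iy, u_ix)$ via (\ref{eqn: hyperbolicity condition 1}) then shows that $u_ip$ lies within $2\delta$ of $[x,y]$ and that its nearest-point projection $\pi_i^p$ onto $[x,y]$ lies in a subsegment of $[x,y]$ of length at most $2d + C\delta$ centred at $p$. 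The analogous statement holds for $u_iq$ with a projection $\pi_i^q$.

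Second, I would partition each of those two subsegments into intervals of length $25\delta$, yielding $M = O(d/\delta)$ intervals on each side, and label each index $i$ by the pair of intervals containing $\pi_i^p$ and $\pi_i^q$. If two indices $i \neq j$ share a label, then $\dist{u_ip}{u_jp} \leq 25\delta + 4\delta$ and similarly for $q$, hence $u_i^{-1}u_j$ moves both $p$ and $q$ by at most $100\delta$. Since $\dist pq \geq L$, the $(100\delta, L, N)$-acylindricity bounds the number of such distinct elements by $N$, so at most $N + 1$ indices share any given label. Summing over the $M^2$ labels yields $k \leq (N+1)M^2$, giving $(d, L', (N+1)M^2)$-acylindricity.

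The main technical obstacle is the thin-quadrilateral estimate itself: one must verify carefully, using the buffer $K\delta$, that the short sides $[x, u_ix]$ and $[y, u_iy]$ cannot reach $p$ or $q$, so that the projection of $u_ip$ really does land on the ``long'' sides and thereby inherits a control in $\delta$ rather than in $d$. Once this is in place, the pigeonhole count is routine.
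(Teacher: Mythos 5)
Your proof is correct, and it follows the same general strategy that the paper attributes to Dahmani--Guirardel--Osin: move the pair $(x,y)$ inward to a pair $(p,q)$ on the geodesic where the displacement drops from $d$ to $O(\delta)$, then pigeonhole and invoke the $(100\delta,L,N)$-hypothesis. The paper itself does not reprove the statement but cites DGO and records, in the remark that follows, the quantitative output $L(d)=L+4d+100\delta$ and $N(d)=(d/(5\delta)+3)N$.

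There is one genuine difference. Your pigeonhole tracks the positions of \emph{both} projections $\pi_i^p$ and $\pi_i^q$ independently, giving $N(d)=O\bigl((d/\delta)^2\bigr)N$, whereas the paper's remark has $N(d)$ \emph{linear} in $d/\delta$. The linear bound comes from noticing that only one pigeonhole parameter is actually needed: since $u_i[p,q]$ fellow-travels a subsegment of $[x,y]$ of the same length (up to $O(\delta)$), the signed shift of $\pi_i^p$ along $[x,y]$ determines the signed shift of $\pi_i^q$ up to an additive $O(\delta)$. Hence two indices whose $\pi^p$-projections land in the same $O(\delta)$-window automatically have $\pi^q$-projections in a comparable window. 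Your version is a bit more wasteful but simpler to verify, and it still proves the proposition as stated, since for that only finiteness of $N(d)$ for each $d$ is required. Two further small points: the ``$2\delta$''-slimness you invoke for quadrilaterals should be recalibrated to whatever constant the paper's four-point definition (\ref{eqn: hyperbolicity condition 1}) yields for slim triangles (roughly $4\delta$, hence $8\delta$ for quadrilaterals) — this is harmlessly absorbed by the $100\delta$ threshold; and the count ``at most $N+1$ indices per label'' can be tightened to $N$, since the identity already accounts for one of the $\leq N$ elements in the $100\delta$-stabilizer of $(p,q)$.
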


\paragraph{Remark.}
F.~Dahmani, V.~Guirardel and D.~Osin work in a class of geodesic spaces.
Nevertheless, following the proof of \cite[Proposition~5.31]{Dahmani:2017ef} one observes that the statement also holds for length spaces.
Moreover one gets the following \emph{quantitative} statement.
Assume that the action of $G$ on $X$ is $(100\delta, L, N)$-acylindrical, then for every $d > 0$ the action is $(d, L(d), N(d))$-acylindrical where
\begin{eqnarray*}
	L(d) & = &  L + 4d + 100\delta, \\
	N(d) & = & \left(\frac d{5\delta} + 3 \right)N.
\end{eqnarray*}

\paragraph{Classification of group actions.}
We assume here that the action of $G$ on $X$ is WPD.
We denote by $\partial G$ the set of all accumulation points of an orbit $G \cdot x$ in the boundary $\partial X$.
This set does not depend on the point $x$.
One says that the action of $G$ on $X$ is 
\begin{itemize}
	\item \emph{elliptic}, if $\partial G$ is empty, or equivalently if one (hence any) orbit of $G$ is bounded;
	\item \emph{parabolic}, if $\partial G$ contains exactly one point;
	\item \emph{lineal}, if $\partial G$ contains exactly two points;
\end{itemize}
If the action of $G$ is elliptic, parabolic or lineal, we will say that this action is \emph{elementary}.
In this context, being elliptic (\resp parabolic, lineal, etc) refers to the action of $G$ on $X$.
However, if there is no ambiguity we will simply say that $G$ is elliptic (\resp parabolic, lineal, etc).
If $g$ is a loxodromic element of $G$, we write $g^-$ and $g^+$ for the repulsive and attractive points of $g$ in $\partial X$.
The subgroup $E^+(g)$ of $G$ fixing point-wise $\{g^-,g^+\}$ is a lineal subgroup. 
The set $F$ of all elliptic elements of $E^+(g)$ forms a normal subgroup of $E^+(g)$ such that $E^+(g)/F$ is isomorphic to $\Z$.
We say that $g$ is \emph{primitive} if its image in $E^+(g)/F$ is $\pm 1$.

%
\subsection{Invariants of a group action.}
%

In this section we recall several numerical invariants associated to a WPD action.
They will be useful to control the value of the critical exponent $n_p$ in Theorems~\ref{res: main theo - regular sc} and \ref{res: periodic quotient of sc groups - free product}.

\paragraph{Exponent of the holomorph.}
Let $F$ be a finite group.
Its \emph{holomorph}, denoted by $\hol F$, is the semi-direct product $\sdp F {\aut F}$, where $\aut F$ stands for the automorphism group of $F$.
The \emph{exponent} of $\hol F$ is the smallest integer $n$ such that $\hol F$ belongs to $\vburn n$.
Assume now that the action of $G$ on $X$ is WPD.
It is known that every lineal subgroup $E$ of $G$ is virtually cyclic.
In particular, it admits a maximal finite normal subgroup $F$ and $E/F$ is isomorphic either to $\Z$ of the infinite dihedral group $\dihedral$; see for instance \cite[Corollary~3.30]{Coulon:2016if}.

\begin{defi}
\label{res: invariant e}
	The number $e(G, X) \in \N \cup \{ \infty\}$ is the least common multiple of the exponents of $\hol F$, where $F$ runs over the maximal finite normal subgroups of all maximal lineal subgroups of $G$.
\end{defi}

\begin{rema}
\label{rem: cyclic lineal and holomorph}
If the lineal subgroups of $G$ are all cyclic, then $e(G,X) = 1$.
In general this quantity can be infinite.
However is the action of $G$ on $X$ is acylindrical, there exists $N \in \N$ the maximal finite normal subgroup of every lineal subgroup has cardinality at most $N$, see for instance \cite[Lemma~6.8]{Osin:2016gv}.
Therefore $e(G,X)$ is finite.
\end{rema}

\paragraph{Injectivity radius.}
To measure the action of an element $g \in G$ on $X$ we define the \emph{translation length} and the  \emph{asymptotic translation length} as
\begin{displaymath}
	\len[espace=X] g = \inf_{x \in X} \dist {gx}x, 
	\quad \text{and} \quad
	\len[espace=X,stable] g = \lim_{n \rightarrow + \infty} \frac 1n \dist{g^nx}x.
\end{displaymath}
These two lengths are related as follows \cite[Chapitre 10, Proposition 6.4]{CooDelPap90}.
\begin{equation}
\label{eqn: translation lengths}
	\len[stable]g \leq \len g \leq \len[stable] g + 16\delta.
\end{equation}

\begin{defi}[Injectivity radius]
\label{res: inj radius}
	The injectivity radius of $G$ on $X$ denoted by $\inj[X]G$ is the infimum of $\len[stable, espace=X]g$ over all loxodromic elements $g \in G$.
\end{defi}

\begin{lemm}[Bowditch {\cite[Lemma~2.2]{Bowditch:2008bj}}]
\label{res: lower bound inj radius}
	Let $L, N > 0$.
	Assume that the action of $G$ on $X$ is $(100\delta, L, N)$-acylindrical.
	Then the injectivity radius of $G$ on $X$ is bounded below by 
	\begin{displaymath}
		\inj[X]G \geq \delta/N
	\end{displaymath}
\end{lemm}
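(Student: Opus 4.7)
The plan is to argue by contradiction. Suppose some loxodromic $g \in G$ satisfies $\len[stable,espace=X]g < \delta/N$; I will then produce $N+1$ distinct elements of $G$ that simultaneously move two far-apart points by at most $100\delta$, contradicting the $(100\delta, L, N)$-acylindricity of the action.

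First, I would build a well-behaved quasi-axis for $g$. Since $g$ is loxodromic, some orbit map $\Z \to X$, $k \mapsto g^k x$, is a quasi-isometric embedding. Applying the stability of quasi-geodesics (\autoref{res: stability (1,l)-quasi-geodesic}) together with the comparison $\len g \leq \len[stable] g + 16\delta$ from (\ref{eqn: translation lengths}), one can choose a basepoint $x_0 \in X$ on such a quasi-axis so that, for every $i \geq 0$,
\begin{displaymath}
  \dist{g^i x_0}{x_0} \leq i \cdot \len[stable] g + C\delta,
\end{displaymath}
where $C$ is an absolute constant coming from hyperbolic geometry (concretely controlled by $L_S$ from \autoref{def: constant LS}). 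The key point is that the additive error is linear in $\delta$ and independent of $i$.

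Next, pick $M \in \N$ large enough that $\dist{g^M x_0}{x_0} \geq L$ (possible because $g$ is loxodromic, so the orbit escapes to infinity), and set $y_0 := g^M x_0$. Because $G$ acts by isometries and $g^M$ commutes with each $g^i$, one has $\dist{g^i y_0}{y_0} = \dist{g^i x_0}{x_0}$ for every $i$. Now consider the $N+1$ distinct elements $1, g, g^2, \dots, g^N$. For $0 \leq i \leq N$, the bound of the previous step gives
\begin{displaymath}
  \dist{g^i x_0}{x_0} \leq N \cdot \len[stable] g + C\delta < \delta + C\delta \leq 100\delta,
\end{displaymath}
provided the constant $C$ is absorbed in the slack between $1+C$ and $100$; the same bound holds for $y_0$. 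Thus we exhibit $N+1$ group elements simultaneously moving $x_0$ and $y_0$ by at most $100\delta$, with $\dist{x_0}{y_0} \geq L$. This directly violates the $(100\delta, L, N)$-acylindricity hypothesis, so no such $g$ exists and $\inj[X] G \geq \delta/N$.

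The delicate point — and the one where I would spend the most care — is the first step: producing a basepoint $x_0$ for which the orbit $k \mapsto g^k x_0$ is close enough to an honest geodesic that $\dist{g^i x_0}{x_0}$ is controlled by $i\len[stable]g$ plus only an additive error $C\delta$ that does not blow up with $i$. The naive bound $\dist{g^i x_0}{x_0} \leq i\len g$ is useless here, since $\len g$ may be as large as $16\delta$ and $N$ times that vastly exceeds $100\delta$. Circumventing this requires the standard trick of replacing $x_0$ by its nearest-point projection to a quasi-axis (equivalently, passing to a power of $g$ and then back), and then invoking \autoref{res: stability (1,l)-quasi-geodesic} to upgrade the local additive error into a uniform one. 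Once this uniform control is in place, the counting argument above closes immediately.
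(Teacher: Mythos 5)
Your overall plan matches the paper's: run a contradiction through acylindricity by exhibiting $N+1$ distinct powers of $g$ that simultaneously move two far-apart points a small amount. But there is a concrete gap at exactly the step you flag as delicate, and it is not repairable by ``care'' alone.

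The uniform additive constant $C$ in $\dist{g^i x_0}{x_0} \leq i\,\len[stable]g + C\delta$ does not satisfy $C \leq 99$, which is what your chain of inequalities needs for the $100\delta$-acylindricity to bite. Concretely, using an $L_S\delta$-local $(1,\delta)$-quasi-geodesic from $g^-$ to $g^+$, the paper's estimate is $\dist{g^j x}{x} \leq \len{g^j} + 112\delta$ (neighbourhood of the axis of $g^j$), plus $\len{g^j} \leq j\len[stable]g + 16\delta$ from (\ref{eqn: translation lengths}), giving $C = 128$. So the best you can expect to write is $\dist{g^i x_0}{x_0} < \delta + 128\delta = 129\delta$, which exceeds $100\delta$ and does not violate $(100\delta, L, N)$-acylindricity. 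The constant is not an artefact of sloppiness: $112\delta$ comes from the $52\delta$ Hausdorff distance between the quasi-axis and $A_{g^j}$, and $16\delta$ is the gap between $\len{}$ and $\len[stable]{}$, both of which are essentially sharp in this generality.

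The paper's resolution is to first invoke \autoref{res: acylindrical action vs hyp space} (Dahmani--Guirardel--Osin) to upgrade the action to $(200\delta, L', N')$-acylindrical with $L' = L + 900\delta$ and $N' = 50N$. With the movement threshold now at $200\delta$, the bound $N'\len[stable]g + 128\delta < 66\delta + 128\delta < 200\delta$ (under the contradiction hypothesis $\len{g^{N'}} < 66\delta$) closes comfortably. The counting then yields a contradiction with $\{1, g, \ldots, g^{N'}\}$, and unwinding gives $\len[stable]g \geq 50\delta/N' = \delta/N$. Your proof is missing this boosting step; without it, you would either need to argue a sharper geometric estimate ($C < 99$) than the standard one gives, or accept a weaker conclusion. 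Everything else in your outline — the use of $y_0 = g^M x_0$ so that the two displacement bounds coincide, the invocation of stability, the counting of $N+1$ elements — is sound and matches the paper's strategy.
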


\paragraph{Remark.}
Although he does not provide a precise estimate, B.~Bowditch already stresses in his proof that the lower bound only depends on $\delta$, $N$ and $L$.
For the agreement of the reader we compute this lower bound.

\begin{proof}
	According to \autoref{res: acylindrical action vs hyp space}, the action of $G$ on $X$ is $(200\delta, L', N')$-acylindrical where $L' =   L + 900\delta$ and $N'  =  50N$.
	Let $g$ be a loxodromic element of $G$.
	According to (\ref{eqn: translation lengths}), it is sufficient to prove that $[g^{N'}] \geq 66\delta$.
	Assume on the contrary that it is false.
	There exists an $L_S\delta$-local $(1, \delta)$-quasi-geodesic $\gamma \colon \R \rightarrow X$ joining $g^-$ to $g^+$ \cite[Lemma~3.2]{Coulon:2016if}.
	Let $x$ and $y$ be two points on $\gamma$ such that $\dist xy \geq L'$.
	Let $j \in \intvald 0{N'}$.
	We know that $\gamma$ is contained in the $52\delta$-neighbourhood of the axis of $g^j$  \cite[Lemma~2.32]{Coulon:2014fr} hence $\dist{g^jx}x \leq \len{g^j} + 112\delta$.
	It follows from (\ref{eqn: translation lengths}) that 
	\begin{displaymath}
		\dist{g^jx}x \leq \len{g^j} + 112\delta \leq j\len[stable]g + 128\delta \leq N'\len[stable]g + 128\delta \leq \len{g^{N'}} + 128 \delta \leq 200 \delta.
	\end{displaymath}
	The same observation holds with $y$.
	It follows from the acylindricity that the set $\{1, g, g^2, \dots, g^{N'}\}$ contains at most $N'$ elements.
	Hence $g$ has finite order, which contradicts our assumption.
\end{proof}

\paragraph{The invariants $\nu$ and $A$.}
The critical exponent $n_p$ that appears in Theorems~\ref{res: main theo - regular sc} and \ref{res: periodic quotient of sc groups - free product} will depend on two more numerical invariants that are defined as follows.

\begin{defi}
\label{def: invariant nu}
	The invariant $\nu (G,X)$ (or simply $\nu$) is the smallest positive integer $m$ satisfying the following property. 
	Let $g$ and $h$ be two isometries of $G$ with $h$ loxodromic.
	If $g$, $h^{-1}gh$,..., $h^{-m}gh^m$ generate an elementary subgroup which is not loxodromic, then $g$ and $h$ generate an elementary subgroup of $G$.
\end{defi}

To any element $g \in G$, we associate to $g$ an \emph{axis} $A_g$ defined as the set of points $x \in X$ such that $\dist {gx}x  < \len g + 8 \delta$.
Given $\alpha \in \R_+$, we write $A_g^{+\alpha}$ for its $\alpha$-neighborhood.
If $g_0$, \dots, $g_m$ are $m$ elements of $G$ we denote by $A(g_0,\dots, g_m)$ the quantity
\begin{displaymath}
	A(g_0,\dots, g_m) = \diam \left(A_{g_0}^{+13\delta} \cap \dotsc \cap A_{g_m}^{+13\delta}\right).
\end{displaymath}
Recall that the parameter $L_S$ is the constant given by the stability of quasi-geodesics (\autoref{def: constant LS}).

\begin{defi}
\label{def: invariant A}
	Assume that $\nu = \nu(G,X)$ is finite.
	We denote by $\mathcal A$ the set of $(\nu+1)$-uples $(g_0,\dots,g_\nu)$ such that $g_0,\dots,g_\nu$ generate a non-elementary subgroup of $G$ and for all $j \in \intvald 0\nu$, $\len{g_j} \leq L_S\delta$. 
	The parameter $A(G,X)$ is given by
	\begin{displaymath}
		A(G,X) = \sup_{(g_0,\dots,g_\nu) \in \mathcal A} A\left(g_0, \dots,g_\nu\right).
	\end{displaymath}
\end{defi}

\begin{lemm}[Coulon {\cite[Lemmas~6.12 -- 6.14]{Coulon:2016if}}]
\label{res: upper bound nu and A}
	Let $L, N > 0$.
	Assume that the action of $G$ on $X$ is $(100\delta, L, N)$-acylindrical.
	Then the invariants $\nu(G,X)$ and $A(G, X)$ are bounded above as follows
	\begin{displaymath}
		\nu(G,X) \leq N\left(2 + \frac L{\delta}\right) \quad \text{and} \quad A(G,X) \leq 10L_S^2N^3(L + 5 \delta).
	\end{displaymath}
\end{lemm}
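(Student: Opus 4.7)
The plan is to follow Coulon's strategy in \cite{Coulon:2016if}, proving each inequality by contradiction and converting geometric data about common-axis regions into cardinality counts that violate acylindricity. A preliminary used throughout is that an acylindrical action on a hyperbolic space admits no parabolic subgroup, so every elementary non-loxodromic subgroup of $G$ is elliptic and hence possesses a quasi-center of diameter depending only on $\delta$.

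For the bound $\nu(G, X) \leq N(2 + L/\delta)$, I would set $m \geq N(2 + L/\delta)$, assume that $E = \langle g, h^{-1}gh, \ldots, h^{-m}gh^m \rangle$ is elementary and non-loxodromic (hence elliptic), and choose a quasi-center $y$ of $E$. Then each generator of $E$ moves $y$ by a universal multiple of $\delta$; equivalently, the single isometry $g$ approximately fixes each of the $m+1$ points $h^iy$ for $0 \leq i \leq m$. By \autoref{res: lower bound inj radius}, $h$ has stable translation length at least $\delta/N$, so these points lie close to a quasi-axis of $h$ with consecutive spacing at least $\delta/N$, spanning a distance exceeding $L$. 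An isometry that approximately fixes many points along such a long segment of a quasi-axis must stabilize the pair of endpoints $\{h^-, h^+\}$ at infinity, so $g$ lies in the stabilizer of this pair and $\langle g, h\rangle$ is elementary, contradicting the choice of $m$.

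For the bound $A(G, X) \leq 10\, L_S^2 N^3 (L + 5\delta)$, I would take $(g_0, \ldots, g_\nu) \in \mathcal A$ and examine the set $\mathcal Y = \bigcap_{j=0}^{\nu} A_{g_j}^{+13\delta}$. The set $\mathcal Y$ is quasi-convex, and on it each $g_j$ (with translation length $\leq L_S \delta$) moves points by a controlled amount; the stability constant $L_S$ enters here through \autoref{res: stability (1,l)-quasi-geodesic} when one converts neighborhood-of-axis data into actual displacement estimates. If $\diam(\mathcal Y)$ exceeded the claimed bound, one would extract from short words in $g_0, \ldots, g_\nu$ a family of more than $N$ distinct elements that all move two endpoints of a long subsegment of $\mathcal Y$ (at distance $\geq L$) by at most $100\delta$, contradicting the $(100\delta, L, N)$-acylindricity hypothesis. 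Non-elementarity of $\langle g_0, \ldots, g_\nu \rangle$, combined with the definition of $\nu$ (whose bound from the first step controls how many conjugates must be inspected), guarantees that such a family of distinct elements exists. The cubic factor $N^3$ reflects the iterated use of the acylindricity cap $N$, in part through Bowditch's lower bound on $\inj[X]G$, and the quadratic $L_S^2$ comes from two applications of quasi-geodesic stability.

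The main obstacle is the second inequality: the counting argument requires careful synchronization between the quantifier defining $\nu$ and the geometric picture on $\mathcal Y$, so that the products of the $g_j$ used as ``test elements'' can be shown to be pairwise distinct while having the prescribed approximate-fix behavior. Tracking constants through several applications of hyperbolic stability, acylindricity, and the $\inj$ lower bound is what pins down the explicit polynomial expression on the right-hand side and forces the bookkeeping to be done with care.
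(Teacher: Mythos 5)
The paper does not itself prove this lemma; it cites Lemmas~6.12--6.14 of Coulon's earlier paper and remarks that the explicit bounds follow by tracking constants through those arguments. So your sketch has to be assessed on its own terms, and for the $\nu$ bound it has a genuine gap. You correctly reduce to the picture in which a quasi-center $y$ of the elliptic group $E$ gives $m+1$ points $y,hy,\dots,h^m y$, spaced at least roughly $\delta/N$ apart along a quasi-axis of $h$, each moved by $g$ by a universally small amount. But the next step --- ``an isometry that approximately fixes many points along such a long segment of a quasi-axis must stabilize the pair of endpoints $\{h^-,h^+\}$'' --- is not automatic: $g$ is \emph{elliptic} here, and an elliptic isometry can a priori move points along a long segment by a uniformly small amount without bearing any relation to $\{h^\pm\}$. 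To close this, one needs acylindricity again, and the cited argument does so not by studying $g$ alone but by producing too many elements with bounded displacement at a pair of far-apart points: the conjugates $h^{-i}gh^i$ for $0\le i\le N$ all move \emph{both} $y$ and $h^{m-N}y$ by a universal multiple of $\delta$, and $d(y,h^{m-N}y)\ge L$ once $m-N\gtrsim LN/\delta$ (up to bounded errors), so by $(100\delta,L,N)$-acylindricity two of these $N+1$ conjugates must coincide. That forces $g$ to commute with a nonzero power of $h$, hence $\langle g,h\rangle\le E^+(h)$ is elementary.

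Your single-element version can only produce the requirement $m>LN/\delta$ and does not explain the extra $2N$ summand in the stated bound $N(2+L/\delta)$ --- that summand records precisely the $N+1$ conjugates and the slack between stable and actual translation lengths, which is a good sanity check that the conjugate-counting step is load-bearing and not optional. For the bound on $A(G,X)$, your outline is in the right spirit, but, as you concede, the displacements of the $g_j$ on $\mathcal Y$ are of order $L_S\delta$ rather than $100\delta$, so one must pass through the rescaled acylindricity constants in the remark following Proposition~\ref{res: acylindrical action vs hyp space}, and one must specify a concrete device for producing more than $N(d)$ pairwise distinct group elements with the required small displacement at both ends of a long subsegment of $\mathcal Y$; the sketch does not yet say where the $N^3$ or the two factors of $L_S$ come from, and until those are accounted for the second inequality is not established.
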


\paragraph{Remark.}
The statements in \cite{Coulon:2016if} do not mention an explicit upper bound for $\nu(G,X)$ and $A(G, X)$.
However the above inequalities directly follow from the proofs given therein.
Our estimates are very generous.
The important point to notice is that they only depend on $\delta$, $L$ and $N$.

\medskip
The purpose of these two invariants can be illustrated as follows.
The Margulis Lemma tells us that if $G$ is a discrete group of isometries of a simply-connected manifold with pinched negative curvature $X$, then there exists $\epsilon > 0$, so that for every $x \in X$, the subset $U(x) = \set{g \in G}{\dist {gx}x \leq \epsilon}$ generates a virtually nilpotent subgroup of $G$.
Such a statement does not hold any more if the curvature of $X$ is not bounded from below.
For instance it fails is $X$ is a tree or more generally a Gromov hyperbolic space.
Controlling $\nu$ and $A$ allows us to recover the following analogue of Margulis Lemma.

\begin{prop}[Coulon {\cite[Corollary~4.45]{Coulon:2016if}}]
\label{res: overlap multiples axes}
	Let $m$ be an integer such that $m \leq \nu (G,X)$.
	Let $g_0, \dots, g_m$ be $m+1$ elements of $G$.
	If they do not generate an elementary subgroup, then  
	\begin{equation*}
		A\left(g_0, \dots,g_m\right) \leq (\nu + 2)\sup_{0 \leq i \leq m} \len {g_i} + A(G,X) + 680\delta.
	\end{equation*}
\end{prop}

Observe that the quantity $A(g,h)$ is measures the overlap between the respective axes of $g$ and $h$, and thus can be thought of as a geometric measure of the length of the maximal piece between $g$ and $h$.
Controlling such quantities play a key role to produce infinite torsion groups by iterated small cancelation theory, see \cite{Coulon:2014fr,Coulon:2016if}


%
\section{From acylindrical action to periodic quotient}
%
\label{sec: acylindrical to burnside}

In this section we prove the following fact.
If a group $G$ admits a non-elementary acylindrical action on a hyperbolic space $X$, then one can exploit the negative curvature of $X$ to produce a (partially) periodic quotient of $G$.
A precise statement is given in \autoref{res: acylindricity gives uniform exp} below.
We want to stress the fact that the critical exponent $n_p$ appearing in \autoref{res: acylindricity gives uniform exp} only depends on the parameters of the action and not on the group $G$ or the space $X$.

\begin{prop}[compare with Coulon {\cite[Theorem~6.15]{Coulon:2016if}}]
\label{res: acylindricity gives uniform exp}
	Let $N, L, \delta, r >0$.
	There exists $N_1 \in \N$ such that the following holds.
	Let $G$ be a group acting $(100\delta, L, N)$-acylindrically on a $\delta$-hyperbolic length space $X$.
	We assume that $G$ is non-elementary, has no even torsion and $e(G,X)$ is odd.
	For every odd integer $n \geq N_1$ that is a multiple of $e(G,X)$, there exists a quotient $B_n$ of $G$ with the following properties.
	\begin{enumerate}
		\item \label{enu: acylindricity gives uniform exp - elliptic embeds}
		Every elliptic subgroup of $G$ embeds in $B_n$.
		\item \label{enu: acylindricity gives uniform exp - periodicity}
		For every $b \in B_n$ that is not the image of an elliptic element we have $b^n = 1$.
		\item \label{enu: acylindricity gives uniform exp - universal property}
		If every elliptic subgroup of $G$ belongs $\vburn n$ then $B_n$ is isomorphic to $G/G^n$. 
		In particular $B_n$ lies in $\vburn n$.
		\item \label{enu: acylindricity gives uniform exp - infiniteness}
		There exist infinitely many elements in $B_n$ which are not the image of an elliptic element of $G$.
		\item \label{enu: acylindricity gives uniform exp - one-to-one}
		For every $g \in G \setminus\{1\}$, for every $x \in X$, if $\dist {gx}x \leq r$, then the image of $g$ in $B_n$ is not trivial.
	\end{enumerate}
\end{prop}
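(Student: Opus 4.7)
The plan is to reduce the statement to the quantitative version of Coulon's theorem \cite[Theorem~6.15]{Coulon:2016if}, which already produces, for any non-elementary group $H$ acting on a hyperbolic space $Y$ (with mild torsion hypotheses), a partially periodic quotient $B_n$ with properties analogous to \ref{enu: acylindricity gives uniform exp - elliptic embeds}--\ref{enu: acylindricity gives uniform exp - infiniteness}. The critical exponent there depends on the parameters $\delta$, $\inj[Y]H$, $\nu(H,Y)$, $A(H,Y)$ and $e(H,Y)$. The content of our statement is that, under the stronger assumption of \emph{acylindricity} with fixed parameters $(100\delta, L, N)$, all these invariants can be bounded by a function of $\delta, L, N$ alone, so that the critical exponent $N_1$ becomes independent of the particular group $G$.

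First, I would verify that the hypotheses of Coulon's theorem are met. Non-elementarity of the action is explicit; the hypotheses on torsion and on $e(G,X)$ being odd, together with $n$ being odd and a multiple of $e(G,X)$, match the input required in \cite{Coulon:2016if}. Then I would use \autoref{res: lower bound inj radius} to replace the injectivity radius by $\delta/N$, and \autoref{res: upper bound nu and A} to replace $\nu(G,X)$ and $A(G,X)$ by the explicit upper bounds $N(2+L/\delta)$ and $10L_S^2 N^3 (L+5\delta)$ respectively. Feeding these universal bounds into the formula for the critical exponent in Coulon's theorem produces a number $N_1=N_1(N,L,\delta)$ depending only on the acylindricity parameters, not on $G$ or $X$, and items \ref{enu: acylindricity gives uniform exp - elliptic embeds}--\ref{enu: acylindricity gives uniform exp - infiniteness} follow directly from the corresponding conclusions of Coulon's theorem.

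It remains to ensure item \ref{enu: acylindricity gives uniform exp - one-to-one}, which asks that elements with short displacement at some point of $X$ survive in $B_n$. The key input is that Coulon's construction proceeds via a sequence of small-cancellation-like quotients, and that at each stage the quotient map preserves the metric at small scales; concretely, there exists a constant $\rho = \rho(\delta, N, L)$ such that any non-trivial $g \in G$ with $\dist{gx}{x} \leq \rho$ at some $x$ has non-trivial image in $B_n$. This follows from the fact that the elements killed along the way are images of high powers of loxodromic isometries whose translation length is bounded below in terms of the injectivity radius. Thus, provided we choose $N_1$ large enough so that $\rho(\delta, N, L) \geq r$, property \ref{enu: acylindricity gives uniform exp - one-to-one} is automatic. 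The desired $N_1$ is therefore the maximum of this threshold and the one coming from items \ref{enu: acylindricity gives uniform exp - elliptic embeds}--\ref{enu: acylindricity gives uniform exp - infiniteness}.

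The main obstacle is not a conceptual one but rather a bookkeeping one: one has to trace through the proof of \cite[Theorem~6.15]{Coulon:2016if} to confirm that the critical exponent depends on the action only through $\delta$, $\inj[X]G$, $\nu$, $A$ and $e$, and that the lower bound $\rho$ on small-scale injectivity of the quotient map depends only on these same invariants. Once this uniformity is established, combining it with \autoref{res: lower bound inj radius} and \autoref{res: upper bound nu and A} gives a universal $N_1 = N_1(N, L, \delta, r)$, completing the proof.
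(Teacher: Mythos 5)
Your overall strategy matches the paper's: reduce to Coulon's theorem, use \autoref{res: lower bound inj radius} and \autoref{res: upper bound nu and A} to bound the invariants $\inj[X]G$, $\nu(G,X)$, $A(G,X)$ in terms of the acylindricity parameters $(100\delta, L, N)$ alone, and thereby obtain a critical exponent $N_1$ that does not depend on the specific group or space. The paper likewise does not merely cite Coulon's Theorem~6.15 but re-runs the proof via the induction step (\autoref{res: SC - induction lemma}), precisely because the original statement does not make the dependence of $N_1$ on the parameters explicit --- this is the ``bookkeeping'' you identify.

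However, your treatment of item~\ref{enu: acylindricity gives uniform exp - one-to-one} contains a gap. You posit a survival radius $\rho = \rho(\delta, N, L)$ and then propose to ``choose $N_1$ large enough so that $\rho(\delta, N, L) \geq r$.'' But if $\rho$ depends only on $\delta$, $N$, $L$, then no choice of $N_1$ can make it exceed an arbitrary $r$; as stated, the claim is circular. The correct mechanism, and the one in the paper, is to rescale the metric at the very start: one replaces $X$ by $X_0 = aX$ with the rescaling factor $a$ chosen so that, among other constraints, $a \leq L_S\delta_1/r$. This ensures that an element $g$ with $\dist[X]{gx}x \leq r$ satisfies $\dist[X_0]{gx}x \leq L_S\delta_1$, which is the \emph{universal} threshold (independent of $r$ and of $N_1$) below which survival in the successive quotients can be tracked using properties \ref{enu: SC - induction lemma - translation length} and \ref{enu: SC - induction lemma - conjugate of elliptic} of the induction step. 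The dependence of $N_1$ on $r$ then enters indirectly: making $a$ small (to accommodate a large $r$) forces $N_1$ to grow, because one needs $a\delta/N \geq r_0/\sqrt{N_1}$ to verify the injectivity-radius hypothesis of \autoref{res: SC - induction lemma}. So $r$ enters through the rescaling of the input space, not through a survival radius in the output that could be inflated by choosing $N_1$ larger.
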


\begin{rema}
In \cite{Adian:1976vg}, S.I.~Adian introduced a notion of free product in the class $\vburn n$.
Our result can be used to recover most of its properties.
Let $n$ be an odd integer.
Consider $A$ and $B$ two groups of exponent $n$.
It follows from Bass-Serre theory that the (regular) free product $G = A \ast B$ acts $(0,1,0)$-acylindrically on the corresponding Bass-Serre tree $X$, which is a $0$-hyperbolic space.
Provided $n$ is large enough (this value does not depend on $A$ or $B$) one can apply \autoref{res: acylindricity gives uniform exp} to $G$ and $X$.
We get as output a group of exponent $n$, denoted by $\freepb AB$, in which $A$ and $B$ embed.
Moreover if $H$ is a group of $\vburn n$ containing $A$ and $B$ and generated by these two subgroups, then $H$ is a quotient of $\freepb AB$.
In this article we always assumed that the hyperbolicity constant $\delta$ is \emph{positive}.
Nevertheless this is not an issue as we may look at the Bass-Serre tree as a $\delta$-hyperbolic space for arbitrarily small positive $\delta$.
\end{rema}

\medskip
The proof of \autoref{res: acylindricity gives uniform exp} is essentially done by the first author in \cite[Theorem~6.15]{Coulon:2016if}.
However the statement there does not make the dependency between $N_1$ and all the other parameters explicit.
For the agreement of the reader we recall the main steps of the proof, focusing on the ones  which are crucial for the control of $N_1$.
The main ideas are the following.
One defines by induction a sequence of quotients
\begin{displaymath}
	G = G_0 \rightarrow G_1 \rightarrow  \dots \rightarrow G_k \rightarrow G_{k+1}\rightarrow  \dots 
\end{displaymath}
where $G_{k+1}$ is obtained from $G_k$ by adding new relations of the form $h^n$ with $h$ running over all small loxodromic elements of $G_k$.
The quotient $B_n$ is then the direct limit of these groups.
The difficulty is to control the geometry of $G_k$ at each step to make sure the that the sequence of groups does not ultimately collapse.
This is the role of the next statement that will be used as the induction step in our process.

\begin{prop}[Coulon {\cite[Proposition~6.1]{Coulon:2016if}}]
\label{res: SC - induction lemma}
	There exist positive constants $\delta_1$, $A_0$, $r_0$, $\alpha$ such that for every positive integer $\nu_0$ there is an integer $N_0$ with the following properties.
	Let $G$ be a group without involution (i.e.\ element of order 2) acting by isometries on a $\delta_1$-hyperbolic length space $X$.
	We assume that this action is WPD, non-elementary and without parabolic.
	Let $N_1 \geq N_0$  and $n \geq N_1$ be an odd integer.
	We denote by $P$ the set of primitive loxodromic elements $h \in G$ such that $\len h \leq L_S\delta_1$.
	Let $K$ be the (normal) subgroup of $G$ generated by $\{h^n, h \in P\}$ and $\bar G$ the quotient of $G$ by $K$.
	We make the following assumptions.
	\begin{enumerate}
		\item \label{enu: SC - induction lemma - e}
		$e(G,X)$ divides $n$.
		\item \label{enu: SC - induction lemma - nu}
		$\nu(G,X) \leq \nu_0$.
		\item \label{enu: SC - induction lemma - A}
		$A(G,X) \leq \nu_0A_0$.
		\item \label{enu: SC - induction lemma - rinj}
		 $\rinj[X]G \geq r_0/\sqrt{N_1}$.
	\end{enumerate}
	Then there exists a $\delta_1$-hyperbolic length space $\bar X$ on which $\bar G$ acts by isometries.
	This action is WPD, non-elementary and without parabolic.
	The group $\bar G$ has no involution.
	Moreover it satisfies  Assumptions~\ref{enu: SC - induction lemma - e}-\ref{enu: SC - induction lemma - rinj}.
	In addition, the map $G \rightarrow \bar G$ has the following properties.
	\begin{labelledenu}[P]
		\item \label{enu: SC - induction lemma - translation length}
		For every $g \in G$, if $\bar g$ stands for its image in $\bar G$, we have 
		\begin{displaymath}
			\len[stable, espace= \bar X]{\bar g} \leq \frac {\alpha}{\sqrt {N_1}}\len[stable, espace= X]g. 
		\end{displaymath}
		\item  \label{enu: SC - induction lemma - elementary subgroup}
		For every elliptic or parabolic subgroup $E$ of $G$, the map $G \rightarrow \bar G$ induces an isomorphism from $E$ onto its image $\bar E$ which is elementary and non-loxodromic.
		\item \label{enu: SC - induction lemma - elliptic element}
		Let $\bar g$ be an elliptic element of $\bar G$. 
		Then $\bar g^n = 1$ or $\bar g$ is the image of an elliptic element of $G$.
		\item \label{enu: SC - induction lemma - conjugate of elliptic}
		Let $u,u' \in G$ such that $\len u < L_S\delta_1$ and $u'$ is elliptic. 
		If the respective images of $u$ and $u'$ are conjugated in $\bar G$, then so are $u$ and $u'$ in $G$.
	\end{labelledenu}
\end{prop}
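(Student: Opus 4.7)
The plan is to realize $\bar G$ as acting on a ``cone-off'' space $\bar X$ obtained from $X$ by geometrically imposing the relations $h^n = 1$ for $h \in P$, following the rotating family and small cancellation strategy of Delzant-Gromov refined by the first author. For each primitive loxodromic $h \in P$ one fixes a quasi-axis $\gamma_h \subset X$ on which $\langle h\rangle$ acts by translation of length $\len h \leq L_S\delta_1$, and by Assumption \ref{enu: SC - induction lemma - rinj} we have $\len h \geq \rinj[X]G \geq r_0/\sqrt{N_1}$. The collection of pairs $(\gamma_h, \langle h^n\rangle)$ together with their $G$-translates then form a rotating family whose rotation amplitude $n\len h$ is at least $r_0\sqrt{N_1}$, hence arbitrarily large in $N_1$. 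The hypotheses that $G$ has no involution and that $e(G,X)$ divides $n$ ensure that this rotating family interacts correctly with the finite normal subgroups of the lineal stabilizers in $G$, and in particular that $\bar G$ itself remains without involutions.

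The first concrete step is to build $\bar X$ by equivariantly attaching to $X$ a hyperbolic cone along each $G$-translate of $\gamma_h$, in such a way that a fundamental domain of $\langle h^n\rangle$ on $\gamma_h$ becomes the boundary curve of the attached cone. The quotient $\bar G$ then acts on $\bar X$ by isometries. The central and hardest step is to prove that $\bar X$ is again $\delta_1$-hyperbolic with the \emph{same} universal constant $\delta_1$, provided $N_1 \geq N_0 = N_0(\nu_0)$. The mechanism is a metric small cancellation argument in which the bound $\nu(G,X) \leq \nu_0$ controls the combinatorial complexity of how several cones can simultaneously enter a geodesic triangle of $\bar X$, the bound $A(G,X) \leq \nu_0 A_0$ controls the fellow-travel between the attaching curves in $X$, and the girth $n\len h \gtrsim r_0\sqrt{N_1}$ of each cone controls local injectivity at cone apices. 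When $N_1$ is large, any geodesic triangle in $\bar X$ either lifts to a thin triangle in $X$ modulo short cone traversals, or contains a cone arc long enough to admit a shortcut through the cone apex; either alternative yields $\delta_1$-thinness. Applying \autoref{res: upper bound nu and A} and \autoref{res: lower bound inj radius} to the action of $\bar G$ on $\bar X$ closes the induction by preserving Assumptions \ref{enu: SC - induction lemma - e}--\ref{enu: SC - induction lemma - rinj}.

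The third step is to verify properties \ref{enu: SC - induction lemma - translation length}--\ref{enu: SC - induction lemma - conjugate of elliptic}. For \ref{enu: SC - induction lemma - translation length}, a long segment of the axis of $g$ in $X$ projects in $\bar X$ onto a path whose length is shortened, each time it wraps around a cone, by a factor of order $1/\sqrt{N_1}$ coming from the ratio between the uniformly bounded cone diameter and the cone girth $\sim r_0\sqrt{N_1}$. Properties \ref{enu: SC - induction lemma - elementary subgroup} and \ref{enu: SC - induction lemma - conjugate of elliptic} follow because every elliptic or parabolic subgroup of $G$ admits an invariant orbit in $X$ that can be arranged to avoid cone apex loci, so that the projection $X \to \bar X$ is an isometric embedding on such an orbit and no collapse occurs; acylindricity at scale $100\delta_1$ then controls stabilizers of the lifts. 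Property \ref{enu: SC - induction lemma - elliptic element} expresses the dichotomy that any new elliptic element of $\bar G$ must fix a cone apex, hence is conjugate into a cyclic quotient $\langle h\rangle / \langle h^n\rangle$, so its $n$-th power is trivial.

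The main obstacle is the preservation of the hyperbolicity constant $\delta_1$ under the cone-off, with a threshold $N_0$ depending only on $\nu_0$ and on the universal constants $\delta_1, A_0, r_0, \alpha$, and not on the specific group $G$ or space $X$. This uniformity is precisely what will make the lemma usable as an induction step in the proof of \autoref{res: acylindricity gives uniform exp}. The delicate point is that cones attached along different $G$-translates of the $\gamma_h$'s can interact in complicated configurations; controlling their combined contribution to the geometry of $\bar X$ forces the small cancellation parameter $1/\sqrt{N_1}$ to sit below a universal threshold, and it is exactly here that the quantitative bounds on $\nu$ and $A$ enter in a sharp way.
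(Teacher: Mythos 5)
The statement you were asked to prove is not actually proved in this paper: it is imported verbatim from the first author's earlier work, as the label already indicates (Coulon, Proposition~6.1 of \cite{Coulon:2016if}). The paper treats it as a black box and records, in the surrounding remarks and in the proof of \autoref{res: acylindricity gives uniform exp}, only the observation that one can extract the quantitative dependence of $N_0$ on $\nu_0$ and of the universal constants $\delta_1$, $A_0$, $r_0$, $\alpha$ by following the proofs of \cite{Coulon:2016if}. So there is no proof in this paper against which your sketch can be compared.

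That said, your outline does faithfully describe the overall strategy of the cited work: a Delzant--Gromov--style cone-off along quasi-axes of the short primitive loxodromic elements, the metric small cancellation input governed by $\nu$, $A$, and the large girth $n\ell\gtrsim r_0\sqrt{N_1}$, and the dichotomy that new elliptic elements in $\bar G$ fix cone apices. Two technical points in your write-up are stated in a way that would not survive scrutiny. First, the claim that the cone-off of a $\delta_1$-hyperbolic space is again $\delta_1$-hyperbolic with the \emph{same} constant is false as stated for the raw cone-off; the construction in \cite{Coulon:2016if} achieves a uniform $\delta_1$ only after a rescaling step that is built into the definition of $\bar X$, and your sketch never mentions this rescaling, which is exactly where the invariants $\nu_0 A_0$ and $r_0/\sqrt{N_1}$ actually enter quantitatively. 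Second, the justification you offer for \ref{enu: SC - induction lemma - elementary subgroup} and \ref{enu: SC - induction lemma - conjugate of elliptic} --- that elliptic orbits ``can be arranged to avoid cone apex loci'' so that $X\to\bar X$ restricts to an isometric embedding on them --- is not the mechanism; the actual argument relies on the small cancellation theorem for rotating families, which gives an isometric embedding of $X$ into $\bar X$ away from a uniformly thickened neighbourhood of the rotating set, together with the fact that elements with small displacement in $X$ either lie in $P$ (and hence are killed only in their $n$-th powers) or have short quasi-axes disjoint from those of $P$ by the injectivity-radius lower bound. Your phrasing suggests a freedom of arrangement that one does not have. None of this is a gap in the paper you were reading; it is a gap between a two-paragraph impression of \cite{Coulon:2016if} and an actual proof of its Proposition~6.1.
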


\paragraph{Vocabulary.}
Let $n \geq N_1$ be two integers as in the proposition above.
Let $G$ be a group acting by isometries on a metric space $X$.
We say that $(G,X)$ \emph{satisfies the induction hypotheses for exponent $n$} if it satisfies the assumptions of \autoref{res: SC - induction lemma}, including Points~\ref{enu: SC - induction lemma - e}-\ref{enu: SC - induction lemma - rinj}.

\begin{proof}[Proof of \autoref{res: acylindricity gives uniform exp}]
	The parameters $\delta_1$, $A_0$, $r_0$ and $\alpha$ are the universal constants given by \autoref{res: SC - induction lemma}.
	Recall that the action of $G$ on $X$ is non-elementary and $(100 \delta, L, N)$-acylindrical.
	We fix $\nu_0 = N(2 + L/\delta)$.
	The critical exponent $N_0 \in \N$ is the one provided by \autoref{res: SC - induction lemma}.
	We define a rescaling parameter $a > 0$ as follows
	\begin{equation}
	\label{eqn: acylindricity gives uniform exp - a}
	    a = \min\left\{ \frac{\delta_1}\delta, \frac{\nu_0A_0}{10L_S^2N^3(L+5\delta)}, \frac{L_S\delta_1}r \right\}
	\end{equation}
	We now choose $N_1 \geq N_0$ such that 
	\begin{equation}
	\label{eqn: acylindricity gives uniform exp - n_1}
	    \frac{a\delta}N \geq \frac{r_0}{\sqrt{N_1}}
	    \quad \text{and} \quad
	    \frac{\alpha}{\sqrt{N_1}} < 1.
	\end{equation}
	Observe that $a$ and thus $N_1$ only depend on $\delta$, $L$, $N$ and $r$.
	From now on, we fix an odd integer $n \geq N_1$ which is a multiple of $e(G,X)$.

	\paragraph{The base of induction.} 
    We denote by $X_0 = aX$ the space $X$ rescaled by $a$, i.e. for every $x,x' \in X_0$, we have $\dist[X_0] x{x'} = a\dist[X]x{x'}$.
    In addition we let $G_0 = G$.
    One checks that the action of $G_0$ on $X_0$ is $(100a\delta,aL,N)$-acylindrical.
    According to \autoref{res: lower bound inj radius} and \autoref{res: upper bound nu and A} we have 
    \begin{displaymath}
        \inj[X_0]{G_0} \geq \frac{a\delta}N,
        \quad
        \nu(G_0,X_0) \leq N\left(2 + \frac L\delta\right)
        \quad \text{and} \quad
        A(G_0,X_0) \leq 10L_S^2N^3a(L+5\delta)
    \end{displaymath}
    It follows from our choice of $a$ and $N_1$ that $X_0$ is $\delta_1$-hyperbolic, $\nu(G_0,X_0) \leq \nu_0$,  $A(G_0,X_0) \leq \nu_0A_0$ \cite[Lemma~2.45]{Coulon:2014fr} and $\inj[X_0]{G_0} \geq r_0/\sqrt{N_1}$.
	In other words, $(G_0,X_0)$ satisfies the induction hypotheses for exponent $n$.
	
	\paragraph{The inductive step.} 
	Let $k \in \N$.
	We assume that we already constructed the group $G_k$ and the space $X_k$ such that $(G_k,X_k)$ satisfies the induction hypotheses for exponent $n$.
	We denote by $P_k$ the set of primitive loxodromic elements $h \in G_k$ such that $\len[espace = X_k] h \leq L_S\delta_1$.
	Let $K_k$ be the (normal) subgroup of $G_k$ generated by $\{h^n, h \in P_k\}$.
	We write $G_{k+1}$ for the quotient of $G_k$ by $K_k$
	By \autoref{res: SC - induction lemma}, there exists a metric space $X_{k+1}$ such that $(G_{k+1},X_{k+1})$ satisfies the induction hypotheses for exponent $n$.
	Moreover the projection $G_k \twoheadrightarrow G_{k+1}$ fulfills the properties \ref{enu: SC - induction lemma - translation length}-\ref{enu: SC - induction lemma - conjugate of elliptic} of \autoref{res: SC - induction lemma}.

	\paragraph{Direct limit.} 
	The direct limit of the sequence $(G_k)$ is a quotient $B_n$ of $G$.
	We claim that this group satisfies the announced properties.
	Following the same strategy as in \cite[Theorem~6.9]{Coulon:2016if} we prove the following statements:
	every elliptic subgroup of $G$ embeds into $B_n$;
	every element $b \in B_n$ which is not the image of an elliptic element of $G$ satisfies $b^n = 1$;
	there are infinitely many elements in $B_n$ which are not the image of an elliptic element of $G$.
	So we are left to prove Points~\ref{enu: acylindricity gives uniform exp - universal property} and \ref{enu: acylindricity gives uniform exp - one-to-one}.
	
	\medskip
	We start with Point~\ref{enu: acylindricity gives uniform exp - universal property}.
	Assume that every elliptic subgroup of $G$ has exponent $n$.
	It follows from the previous discussion that for every $b \in B_n$, $b^n = 1$.
	Hence the projection $G \twoheadrightarrow B_n$ induces an epimorphism $G/G^n \twoheadrightarrow B_n$.
	On the other hand, all the relations added to define $B_n$ are $n$-th power.
	In other words the kernel $K$ of the projection $G \twoheadrightarrow B_n$ is contained in $G^n$.
	Hence $G/G^n$ and $B_n$ are isomorphic.
	
	\medskip
	We finish with Point~\ref{enu: acylindricity gives uniform exp - one-to-one}.
	Let $g \in G \setminus\{1\}$ and $x \in X$ such that $\dist[X]{gx}x \leq r$.
	It follows from our choice of $a$ that $\dist[X_0]{gx}x \leq L_S \delta_1$.
	In particular $\len[espace = X_0] g \leq L_S\delta$.
	If $g$ is elliptic in $G$, then \ref{enu: SC - induction lemma - conjugate of elliptic} tells us that the image of $g$ in $G_1$ is a non-trivial elliptic element.
	If $g$ is loxodromic in $G$, then by construction the image of $g$ in $G_1$ is elliptic.
	Moreover this image is non-trivial \cite[Theorem~5.2(4)]{Coulon:2016if}.
	A proof by induction using \ref{enu: SC - induction lemma - translation length} and \ref{enu: SC - induction lemma - conjugate of elliptic}  now shows that for every $k \in \N$, the image of $g$ in $G_k$ is non trivial.
	Hence neither is its image in $B_n$.
\end{proof}


\section{From small cancellation to acylindrical action} 
\label{sec: sc to acylindrical}

In this section we study the action of a graphical small cancellation group on its hyperbolic cone-off space, see \autoref{def: cone-off space}. We first provide definitions and notations and then show that, in the cases we consider, the action is acylindrical with universal constants. We then proceed to determine the elliptic elements and the maximal elementary subgroups for this action. Finally, we prove a result that will justify our concise statement of \autoref{res: main theo - regular sc}.

\subsection{Definitions and notation}\label{section:definitions_and_notation}
Recall the definitions of small cancellation conditions given in \autoref{section21}. We shall often use the word \emph{piece} to mean either a word as defined in \autoref{section21}, or as a path (in a labelled graph) whose label is a piece in that sense.

We first discuss graphical small cancellation over the free group. We  give a slight generalization of the definition for graphical small cancellation over free groups \autoref{def: graphical small cancellation - burnside} that will turn out to be suitable for proving acylindricity of the action on the hyperbolic cone-off space. 

\begin{defi}
\label{def: small cancellation - bunrside variable exp}
	Let $p \in \N$ and $\lambda \in (0,1)$.
	Let $\Gamma$ be a graph labelled by a set $S$.
	We say that $\Gamma$ satisfies the $C'(\lambda,p)$-small cancellation assumption if the following holds.
	\begin{enumerate}
		\item \label{enu: small cancellation - bunrside variable exp - sc}
		$\Gamma$ satisfies the $C'(\lambda)$-condition.
		\item \label{enu: small cancellation - bunrside variable exp - powers}
		 Whenever $w$ is a cyclically reduced word such that $w^p$ labels a path in $\Gamma$, then for every $n \in \N$, the word $w^n$ labels a path in $\Gamma$.
	\end{enumerate}
\end{defi}

The following definitions will enable us to do small cancellation theory that produces quotients of a given free product of groups $*_{i\in I} G_i$. We recall the definition of graphical small cancellation over free products of Gruber \cite{Gruber:2015ii}. Given a graph $\Gamma$ labelled by a set $S$, the \emph{reduction} of $\Gamma$ is the quotient of $\Gamma$ by the following equivalence relation on the edges of $\Gamma$: $e\sim e'$ if and only if $\ell(e)=\ell(e')$, and there exists a path from $\iota e$ to $\iota e'$ whose label is trivial in $\F(S)$. Here, and henceforth, $\ell(e)$ denotes the label in $S\sqcup S^{-1}$ of an edge $e$.

\begin{defi}[Completion] 
\label{def: small cancellation condition - free product - completion}
	Let $\Gamma$ be a graph labelled by a set $S:=\sqcup_{i\in I}S_i$, where each $S_i$ is a generating set of a group $G_i$. 
	The \emph{completion} $\overline\Gamma$ of $\Gamma$ is defined as the reduction of the graph obtained from $\Gamma$ by performing the following operations.
	\begin{itemize}
		\item onto each edge labelled by $s\in S_i$ for some $i$, attach a copy of $\Cay(G_i,S_i)$ along an edge labelled by $s$;
		\item if, for some $i$, no element of $S_i$ occurs as a label of $\Gamma$, then add a copy of $\Cay(G_i,S_i)$ (as its own connected component).
	\end{itemize}
	A word in the free monoid on $S\sqcup S^{-1}$ is \emph{locally geodesic} if it labels a geodesic in $\Cay(*_{i\in I}G_i,S)$, and a path in $\overline\Gamma$ (or another $S$-labelled graph) is locally geodesic if its label is locally geodesic. 
\end{defi}	
If $\{C_j\}_{j\in J}$ is a collection of copies of $\Cay(G_i,S_i)$ for some fixed $i$ such that $\cup_{j\in J}C_j$ is connected, then the reduction step identifies all the $C_j$ to a single copy of $\Cay(G_i,S_i)$. We deduce $\overline{\overline\Gamma}=\overline\Gamma$.
%

\begin{defi}[Small cancellation condition]
\label{def: small cancellation condition - free product}
		Let $\lambda\in (0,1)$. Let $\Gamma$ be a graph labelled by a set $S:=\sqcup_{i\in I}S_i$, where each $S_i$ is a generating set of a group $G_i$. We say $\Gamma$  satisfies the $C'_*(\lambda)$-condition if
	\begin{itemize}
		\item $\Gamma=\overline\Gamma$;
		\item every $\Cay(G_i,S_i)$ is an embedded subgraph of $\Gamma$;
		\item for every locally geodesic piece $w$ that is a subword of the label of a simple closed path $\gamma$ in $\Gamma$ such that the label of $\gamma$ is non-trivial in $*_{i\in I}G_i$, we have $|w|<\lambda|\gamma|$.	\end{itemize}
	Here, as usual, $|\cdot|$ denotes the (edge-)length of a path, respectively the length of a word (i.e.\ its number of letters). We call the embedded copies of $\Cay(G_i,S_i)$ \emph{attached} Cayley graphs.
	When we say an $S$-labelled graph satisfies the $C'_*(\lambda)$-condition, we shall assume that we have $S=\sqcup_{i\in I}S_i$ for given generating sets $S_i$ of given groups $G_i$. 
	If $\Gamma=\overline\Gamma$, the group $G(\Gamma)$, using our previous definition in \autoref{section22}, coincides with the quotient of $*_{i\in I}G_i$ by all the words read on closed paths in $\Gamma$.
\end{defi}

The assumption $\Gamma=\overline\Gamma$ is a mere technicality to allow for efficient notation. We recall from \cite{Gruber:2015ii} that if $\Gamma$ satisfies the $C'_*(1/6)$-condition, then each generating factor $G_i$ is a subgroup of $G(\Gamma)$, and each component of $\Gamma$ injects into $\Cay(G(\Gamma),S)$.

\medskip

The case that $\Gamma$ can be realized as the completion of a disjoint union of cycle graphs and that $S_i=G_i$ recovers the classical $C'_*(\lambda)$-condition, whose power-free simplification is given in \autoref{def: power-free small cancellation condition - free product}, see \autoref{rema:cylinder_free_classical}.

In order to prove our acylindricity statement also in the free product case, we require the following property for $\Gamma$. It will be used in \autoref{lem:tripleautomorphism}.

\begin{defi}[Cylinder-free]\label{defi:cylinder-free} Let $\Gamma$ satisfy the $C_*'(\lambda)$-condition for some $\lambda$. We say $\Gamma$ is \emph{cylinder-free} if for every connected component $\Gamma_0$ and every two disjoint attached Cayley graphs $C_1$ and $C_2$ of $\Gamma_0$, any label-preserving automorphism $\phi$ of $\Gamma_0$ with $\phi(C_1)=C_1$ and $\phi(C_2)=C_2$ is the identity.
\end{defi}

\begin{rema}\label{rema:cylinder_free_classical}
Let $\Theta$ be a cycle graph labelled by a set $S:=\sqcup_{i\in I}S_i$, where each $S_i$ is a generating set of a group $G_i$. Suppose there is a simple closed path $\gamma$ in $\Theta$ with label $w=w_1w_2\dots w_k$ for $k>1$, where each $w_j$ is in the free monoid on $S_{i_j}\sqcup S_{i_j}^{-1}$ for some $i_j\in I$, such that each $w_j$ is non trivial in $G_{i_j}$, and $i_j\neq i_{j+1}$ for each $j$, and $i_k\neq i_1$. In other words, if $g_j$ is the element of $G_{i_j}$ represented by $w_j$, then $g_1g_2\dots g_k\in *_{i\in I}G_i$ is in normal form and cyclically reduced in the sense of \autoref{section22}. We can realize $\overline\Theta$ as follows: onto each subpath $\gamma_j$ of $\gamma$ with label $w_j$, attach a copy $C_j$ of $\Cay(G_{i_j},S_{i_j})$ along a path in $\Cay(G_{i_j},S_{i_j})$ with label $w_j$. Since $\gamma_j$ is a simple path, $C_j$ is an embedded copy of $\Cay(G_{i_j},S_{i_j})$. Since $w_j$ is non trivial in $G_{i_j}$, the image $\overline\gamma_j$ of $\gamma_j$ in $C_j$ has distinct endpoints. Therefore, for no $j$ two attached copies of $\Cay(G_{i_j},S_{i_j})$ intersect, whence the graph we obtain is reduced. Thus, up to possibly adding as separate connected components copies of $\Cay(G_i,S_i)$ for those $S_i$ which do not appear in $w$, we have obtained $\overline\Theta$. Observe that any attached Cayley graph $C_j$ shares exactly two vertices with other attached Cayley graphs by construction.

%
%
%
%
%
%

More generally, suppose $\Gamma$ is the completion of a disjoint union of cycle graphs with cyclically reduced labels, as discussed above, and let $C$ be an attached Cayley graph in a component $\Gamma_0$ of $\Gamma$. Then there are at most two vertices $v_1$ and $v_2$ that $C$ shares with other attached Cayley graphs. A label-preserving automorphism $\phi$ of $\Gamma_0$ that preserves $C$ also preserves the set $\{v_1,v_2\}$. As $\phi$ is uniquely determined by the image of any one vertex, this means that there are at most two options for $\phi$: being the identity, and permuting $v_1$ and $v_2$. In particular, the order of $\phi$ divides 2.

We also observe: if an automorphism $\phi$ of $\Gamma_0$ preserves an attached $C=\Cay(G_i,S_i)$, then the action of $\phi$ on $C$ corresponds to left-multiplication by the element of $G_i$ represented by the label of any path in $C$ from $v$ to $\phi(v)$ for any vertex $v$ in $C$. Thus, the order of $\phi$ equals the order of an element of $G_i$. In particular, if $\Gamma$ is the completion of a disjoint union of cycle graphs with cyclically reduced labels, as discussed above, and no generating factor has even torsion, then $\Gamma$ is cylinder-free.
\end{rema}

\begin{exam} A classical $C_*'(1/6)$-presentation that is not cylinder-free is given, for example, by the quotient of $\Z*\Z*\Z*\Z/2\Z$, where each copy $G_i$ of $\Z$ is generated by $t_i$, $i=1,2,3$, and $\Z/2\Z$ is generated by $s$, by the relation $t_1t_2t_3st_3^{-1}t_2^{-1}t_1^{-1}s$. 
The corresponding graph $\Gamma$ has one non-trivial automorphism $\phi$ that leaves invariant the two attached Cayley graphs corresponding $\Z/2\Z$ and, for each $i=1,2,3$, exchanges the two attached Cayley graphs corresponding to $G_i$.

Up to taking inverses and cyclic conjugates, all locally geodesic simple closed paths are labelled by $t_1t_2t_3s^{\epsilon_1}t_3^{-1}t_2^{-1}t_1^{-1}s^{\epsilon_2}$, where $\epsilon_1,\epsilon_2\in\{1,-1\}$, and hence have length 8. All pieces have length at most 1 (observe, e.g.\, that by the above considerations, the paths labelled by $t_1t_2t_3$ are \emph{not} pieces). Thus, the $C_*'(1/8+\epsilon)$-condition is satisfied for any $\epsilon>0$.

\end{exam}

\begin{defi}
\label{def: small cancellation condition -free product - burnside}
	Let $\lambda \in (0,1)$, let $p \in \N$.
	Let $\Gamma$ be a graph labelled by a set $S:=\sqcup_{i\in I}S_i$, where each $S_i$ is a generating set of a group $G_i$. 
	We say $\Gamma$ satisfies the $C_*'(\lambda, p)$-condition if 
	\begin{enumerate}
		\item it satisfies the $C_*'(\lambda)$-condition;
		\item for every cyclically reduced word $w$ over the alphabet $S$, if $w^p$ is the label of a path in $\Gamma$, the for every $n \in \N$, there is a path in $\Gamma$ labelled by $w^n$;
		\item it is cylinder-free.
	\end{enumerate}
\end{defi}

We have argued in Remark~\ref{rema:cylinder_free_classical} that a presentation satisfying the power-free $C_*(\lambda,p)$-condition of Definition~\ref{def: power-free small cancellation condition - free product} and for which no generating factor contains elements of order two can be regarded as satisfying the (graphical) $C_*(\lambda,p)$-condition we just defined.

\medskip

\paragraph{The hyperbolic cone-off space $\dot X$.} 
Let $\Gamma$ be a graph labelled be a set $S$. We associate to $\Gamma$ its cone-off space defined by Gruber and Sisto \cite{Gruber:2014wo}.
\begin{defi}[The cone-off space]
\label{def: cone-off space}
	Let $\Gamma$ be a graph labelled by $S$.
	The \emph{cone-off space}, denoted by $\dot X(\Gamma)$ (or simply $\dot X$) is the Cayley graph of $G(\Gamma)$ with respect to $S \cup W$, where $W$ stands for the set of all elements of $G(\Gamma)$ represented by the label of a path in $\Gamma$.
\end{defi}
Observe that in the free product case, if $\Gamma$ is its own completion (as is the case for a $C_*'(\lambda)$-graph), then the image in $G(\Gamma)$ of each one of the generating factors is contained in $W$. 
We record  an immediate consequence of \cite[Remark~4.11]{Gruber:2014wo}:

\begin{theo}[Uniform hyperbolicity of $\dot X$]\label{thm:hyperbolicity} Let $\Gamma$ be a $C'(1/6)$-labelled graph or a $C'_*(1/6)$-labelled graph. Then the vertex set of any geodesic triangle in $\dot X$ is 5-slim. 
\end{theo}

In particular, the vertex set of $\dot X$ is 40-hyperbolic in the sense of \autoref{section:hyperbolic} by \cite[Chapitre 1, Proposition 3.6]{CooDelPap90}.

\medskip

In the following, we will show results concerning quotients of free groups and quotients of free products, which are, in principle, separate cases, whence we have to carry out two proofs. However, we shall see that the geometric arguments involved in both proofs are very similar and, in many cases, exactly the same. We now introduce standing assumptions and notation that will let us efficiently handle the two cases in parallel.

\medskip

\noindent {\bf Notation.} We denote by $\Gamma$ a labelled graph and by $S$ the set of labels. Recall our convention that if we say $\Gamma$ satisfies the $C'_*(\lambda)$-condition, then we assume  $S=\sqcup_{i\in I} S_i$, where the $S_i$ are generating sets of groups $G_i$. We denote $X:=\Cay(G(\Gamma),S)$ and $\dot X$ the cone-off space from \autoref{def: cone-off space}. We will consider $X$ as a subgraph of $\dot X$.

\medskip
We from now on assume that, in the free case, the $\scc$-condition is satisfied and, in the free product case, the $\scsc$-condition is satisfied. Moreover, for simplicity, we assume that no component of $\Gamma$ is a single vertex, and that every generator occurs on some edge of $\Gamma$. (The latter is no restriction: in the free product case, it follows from the definition of $\Gamma$. In the free case we may just add, for any generator that does not occur already, a new component to $\Gamma$ that is merely an edge labelled by the generator. This does not change the metric on the vertex set of $\dot X$.) A \emph{relator} is the image of a component of $\Gamma$ in $X$ under a label-preserving graph homomorphism. Note that such a relator is in fact an (isometrically) \emph{embedded} image of a component \cite{Oll06,Gruber:2015fu,Gruber:2014wo}.

We denote by $F$ the free group on $S$ in the free case, respectively the free product of the generating factors $G_i$ in the free product case.

\medskip

\noindent {\bf $F$-reduced paths.}  In the free product case, we define the following terminology, which should be thought of as a way of dealing with homotopy classes of paths not necessarily in a tree (which corresponds to the free group case) but in a tree of Cayley complexes (which corresponds to the free product case).

We say a path in $X$ is $F$-reduced if its label has the form $w_1w_2\dots w_k$, where each $w_i$ is a word contained in a single generating factor that does not represent the identity, and any two consecutive $w_i$ come from distinct generating factors. (In other words, if $w_i$ represents an element $g_i$ of a generating factor, then $g_1g_2\dots g_k$ is in normal form in the sense of \autoref{section22}.) A closed path is cyclically $F$-reduced if every one of its cyclic shifts is $F$-reduced. (In other words, $g_1g_2\dots g_k$ is weakly cyclically reduced.) We say two paths are $F$-equivalent if they have the same starting vertex and have the same label as elements of $F$.  
A path is $F$-homotopically trivial if its label is trivial in $F$. 
An $F$-tree is a subgraph of $X$ where every two vertices are connected by a unique $F$-equivalence class of $F$-reduced paths or, equivalently, a connected subgraph where any closed path is $F$-homotopically trivial.
Given an $F$-reduced path $p$, an $F$-subpath $q$ is an $F$-reduced path $F$-equivalent to a subpath of $p$ such that the label $\ell(q)$ of $q$ is a  subword of $\ell(p)$ in the free product sense. This means: if the element of $F$ represented by $\ell(p)$ is written as $g_1g_2\dots g_k$ in normal form, then there exist $1\leq i\leq j\leq k$ such that $p$ may be written as $p_1p_2p_3$ with $\ell(p_1)$ representing $g_1g_2\dots g_{i-1}$, $\ell(p_2)$ representing $g_i g_{i+1}\dots g_j$, and $q$ being $F$-equivalent to $p_2$.

\medskip

We have already explained the notion of locally geodesic paths in the free product setting. In the free group case, we shall take locally geodesic to mean reduced (i.e. without back-tracking).

\medskip

In the free group case, all terms defined above are defined in the free group case by simply omitting the prefix ``$F-$'' (i.e.\ an $F$-reduced path is simply a reduced path, a an $F$-tree is simply a tree, \dots). Using the same words for both cases will allow us to streamline statements.

\subsection{Acylindricity of the action on $\dot X$}

We set out to prove the following result. Notice that the constants we produce are universal, i.e.\ independent of the specific graph $\Gamma$ under consideration.

\begin{theo}[Acylindricity theorem]\label{thm:acylindricity} For all $p\in\N$ and $\epsilon\geq 0$ there exist $L>0$ and $N\in\N$ with the following property.
Let $\Gamma$ be a labelled graph satisfying the graphical $C'(1/6,p)$-condition or the graphical $C_*'(1/6,p)$ condition. Then  the action of $G(\Gamma)$ on the cone-off space $\dot X$ is $\bigl(\epsilon, L,N\bigr)$-acylindrical.
\end{theo}

The actual constants we obtain are $L=18\epsilon+25$ and $N=(9\epsilon+4)^3(8p+100)$, i.e.\ $L$ does not depend on $p$, while $N$ does.

\begin{rema} 
When considering the action of $G(\Gamma)$ on $\dot X$, one may instead consider the action of $G(\Gamma)$ on the graph $R$ with vertex set $\{\text{relators in }X\}$ and where any two vertices are connected by an edge if their corresponding relators in $X$ intersect.  The graph $R$ comes with the $G(\Gamma)$-action induced by the action of $G(\Gamma)$ on $X$, and $R$ is $G(\Gamma)$-equivariantly $(1,1)$-quasi-isometric to $\dot X$. (Recall here that we assume that every generator occurs on $\Gamma$.) The two main propositions in our proof of \autoref{thm:acylindricity}, namely Propositions~\ref{prop:parallel} and \ref{prop:action}, are phrased purely in terms of $R$, as are many of our intermediate results. Nonetheless, we will often need the underlying space $X$ in our arguments. 
\end{rema}

\subsubsection{Convexity of geodesics in $\dot X$} 
In this subsection, we link the metric properties of geodesics in $\dot X$ to metric properties of $X$, strengthening \cite[Proposition~3.6]{Gruber:2014wo}. 
We remark here that Lemmas~\ref{lem:chords} and \ref{lem:convex} are our only applications of van Kampen diagrams in the proof of \autoref{thm:acylindricity}. 
While more extensive usage of diagrams (following techniques of \cite{Gruber:2014wo, Arzhantseva:2016uk}) would allow us to provide better acylindricity constants, limiting their usage enables us to more clearly present our proof, in particular in view of the dual approach to both free group and free product cases.

\medskip

We recall a tool from graphical small cancellation theory and basic facts about it, see \cite{Gruber:2015fu,Gruber:2015ii} for details.

\medskip

\noindent {\bf $\Gamma$-reduced diagrams.} A \emph{diagram} over a presentation $\langle S\mid R\rangle$ is a finite connected $S$-labelled graph $D$ with a fixed embedding in $\R^2$, such that each bounded region (\emph{face}) has a boundary word in $R$. Given a closed path $\gamma$ in $\Cay(G,S)$, where $G$ is the group defined by $\langle S\mid R\rangle$, a diagram for $\gamma$ is a diagram that admits a label-preserving map $\partial D\to\gamma$, where $\partial D$ is the boundary of the unbounded component defined by $D$ inside $\R^2$. It is a classical fact that for every closed path in $\Cay(G,S)$ there exists a diagram. To simplify notation, we will often not distinguish in notation between subpaths of $\gamma$ and their preimages in $\partial D$ where this does not cause ambiguity.  A disk diagram is a diagram without cut-vertices.

Suppose we have an $S$-labelled graph $\Gamma$ and the presentation $\langle S\mid \text{labels of simple}$ $\text{closed paths }$ $\text{in }\Gamma\rangle$. A diagram $D$ over $\Gamma$ is a diagram over this presentation. If $\Pi$ is a face of $D$, then $\partial \Pi$ admits a map $\partial\Pi\to\Gamma$ (possibly more than one), which we call lift. We say $D$ is $\Gamma$-reduced if for any two faces $\Pi$ and $\Pi'$ and any path $a$ in $\Pi\cap\Pi'$, no two lifts $\partial\Pi\to\Gamma$ and $\partial\Pi'\to\Gamma$ restrict to the same map on $a$ and any path $a$ in $\Pi\cap\Pi'$ is locally geodesic. In the free product case, we also require that any face whose boundary word is trivial in $F$ actually has a boundary word that is contained in a single generating factor. 

If $\Gamma$ satisfies the $\scc$-condition or the $\scsc$-condition, then any closed path $\gamma$ in $\Cay(G(\Gamma),S)$ admits a $\Gamma$-reduced diagram $D$ over $\Gamma$, see \cite[Lemma~2.13]{Gruber:2015fu} or \cite[Theorem~1.23]{Gruber:2015vc} for the free group case and \cite[Lemma~3.8]{Gruber:2015ii} or \cite[Theorem~1.35]{Gruber:2015vc} for the free product case.

In a $\Gamma$-reduced diagram, any face is simply connected, and for any two faces $\Pi$ and $\Pi'$ and any path $\alpha$ in $\Pi\cap\Pi'$ we have $|\alpha|<\min\{|\partial\Pi|,|\partial\Pi'|\}/6$, because $\alpha$ is a locally geodesic piece. Finally, we have that every face that intersects at least one other face in at least one edge has a label that is non-trivial in $F$. 

An arc in a diagram is an embedded line graph whose endpoints have degrees different from 2 and all whose other vertices have degree 2. Given a face $\Pi$, $d(\Pi)$ is the number of arcs in $\partial D$, $i(\Pi)$ is the number of arcs in $\partial \Pi$ that $\Pi$ shares with other faces (interior arcs), $e(\Pi)$ is the number of arcs it shares with $\partial D$ (exterior arcs). A $(3,7)$-diagram is a diagram where $e(\Pi)=0$ implies $i(\Pi)\geq 7$. Note that a $\Gamma$-reduced diagram over a $\scc$-labelled graph or a $\scsc$-labelled graph $\Gamma$ is a $(3,7)$-diagram. 

\medskip

The following is a classical fact from small cancellation theory, see e.g.\ \cite[Chapter V]{LynSch77}.

\begin{lemm}[Greendlinger's lemma] Let $D$ be a $(3,7)$-disk diagram that is not a single face. Then $D$ contains two faces $\Pi_1$ and $\Pi_2$ with $e(\Pi_1)=1=e(\Pi_2)$ and $i(\Pi_1)\leq 3$ and $i(\Pi_2)\leq 3$.
\end{lemm}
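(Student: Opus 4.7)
The plan is to proceed by the combinatorial Gauss--Bonnet / Euler characteristic argument that is standard for $(3,7)$-diagrams. First, I would reduce $D$ to its arc complex $\tilde D$: the $0$-cells are the endpoints of arcs (vertices of $D$ of degree $\neq 2$, together with all vertices on $\partial D$), the $1$-cells are the arcs, and the $2$-cells are the faces of $D$. Since $D$ is a disk diagram, $\tilde D$ is a disk, so Euler's formula gives $V - E + F = 1$ when the unbounded face is discarded.

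Next, I would set up a counting argument. Splitting arcs into interior and exterior and splitting vertices into interior (with degree $\geq 3$ in $\tilde D$) and boundary ones, I have $\sum_{\Pi} d(\Pi) = 2 E_i + E_e$ and $\sum_{\Pi}(i(\Pi)+e(\Pi)) = 2E_i + E_e$, where $E_i,E_e$ are the numbers of interior and exterior arcs respectively. The key inequalities are then:
\begin{enumerate}
\item Every interior face $\Pi$ satisfies $i(\Pi) \geq 7$ (the $(3,7)$-hypothesis).
\item Every interior vertex of $\tilde D$ has degree $\geq 3$, so $\sum_{v \text{ interior}} \deg(v) \geq 3 V_i$.
\item The boundary $\partial \tilde D$ is a topological circle, contributing its standard count to the Euler formula.
\end{enumerate}

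Then I would apply a combinatorial Gauss--Bonnet computation: assign to each corner $(\Pi,v)$ at an interior vertex of degree $k$ an angle $2\pi/k$, and at a boundary vertex of degree $k$ (counted appropriately) an angle $\pi/k$. Interior vertices then carry total angle $2\pi$ and boundary vertices total angle $\pi$. Define the curvature of a face $\Pi$ by $\kappa(\Pi) = 2\pi - \pi\,d(\Pi) + \sum_{v \in \partial \Pi} \alpha(\Pi,v)$. Summing over all faces and using the Euler formula gives $\sum_\Pi \kappa(\Pi) = 2\pi$. Using that interior vertices have degree $\geq 3$, an interior face with $d(\Pi) = i(\Pi) \geq 7$ contributes $\kappa(\Pi) \leq 2\pi - 7\pi + 7\cdot\tfrac{2\pi}{3} < 0$. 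A boundary face $\Pi$ with $e(\Pi) = 1$ and $i(\Pi) \geq 4$ is similarly checked to have $\kappa(\Pi) \leq 0$ (the exterior arc contributes at most $\pi$ and the $\geq 4$ interior corners at interior vertices contribute at most $4 \cdot \tfrac{2\pi}{3}$), and a face with $e(\Pi) \geq 2$ has its curvature bounded by a constant that is $< 2\pi$.

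Assuming for contradiction that there is at most one face with $e(\Pi) = 1$ and $i(\Pi) \leq 3$, the faces fall into three classes: interior faces, boundary faces with $e(\Pi) = 1$ and $i(\Pi) \geq 4$, boundary faces with $e(\Pi) \geq 2$, plus (possibly) a single distinguished face. The total curvature is then bounded above by the contribution of that single face, which is strictly less than $2\pi$. This contradicts $\sum_\Pi \kappa(\Pi) = 2\pi$, yielding the existence of at least \emph{two} faces with $e(\Pi) = 1$ and $i(\Pi) \leq 3$. The main obstacle is carrying out the curvature bookkeeping cleanly, in particular for the boundary faces with $e(\Pi) \geq 2$ and near corners of $\partial D$, to ensure the threshold is exactly $2$ rather than $1$; this is the point where the $(3,7)$-hypothesis must be used with room to spare rather than tightly.
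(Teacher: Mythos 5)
The paper gives no proof of this lemma; it simply cites Lyndon--Schupp. Your combinatorial Gauss--Bonnet strategy is exactly the standard Lyndon-style argument and is the right route, so in spirit your proposal matches what the reference does. However, the parenthetical calculation you offer for the key case $e(\Pi)=1$, $i(\Pi)\geq 4$ is numerically wrong and would not, as written, yield $\kappa(\Pi)\leq 0$. For such a face you have $d(\Pi)=i(\Pi)+1$ sides (in the arc complex) and hence $d(\Pi)$ corners: two of them lie at the endpoints of the single exterior arc, so there are $i(\Pi)-1$ corners at interior vertices, not $i(\Pi)$. If you plug in your own bound (total corner contribution at most $\pi + 4\cdot\tfrac{2\pi}{3} = \tfrac{11\pi}{3}$) for $i(\Pi)=4$, you get $\kappa(\Pi)\leq 2\pi - 5\pi + \tfrac{11\pi}{3} = \tfrac{2\pi}{3} > 0$, contradicting what you claim. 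The correct accounting uses two further observations you did not record: first, since $D$ is a disk diagram it has no cut-vertices and no spurs, so every arc-complex vertex (interior or boundary) has degree at least $3$; in particular each boundary vertex supports at least two corners, so each boundary corner has angle at most $\pi/2$, not $\pi$. Second, the interior corner count is $i(\Pi)-1$. With these corrections, for a face with $e(\Pi)=1$ one gets
\[
\kappa(\Pi)\ \leq\ 2\pi - \bigl(i(\Pi)+1\bigr)\pi + 2\cdot\tfrac{\pi}{2} + \bigl(i(\Pi)-1\bigr)\cdot\tfrac{2\pi}{3}\ =\ \frac{\pi\bigl(4 - i(\Pi)\bigr)}{3},
\]
which is $\leq 0$ for $i(\Pi)\geq 4$ and is at most $\pi$ when $i(\Pi)\leq 3$.

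There is a second gap you acknowledge but do not close: the faces with $e(\Pi)\geq 2$. For these, each exterior arc contributes two boundary corners (again $\leq\pi/2$ each), and the $e(\Pi)$ exterior arcs are separated along $\partial\Pi$ by $e(\Pi)$ nonempty blocks of interior arcs (no cut-vertices), so $i(\Pi)\geq e(\Pi)$. The same computation gives $\kappa(\Pi)\leq \pi\bigl(6-i(\Pi)-2e(\Pi)\bigr)/3 \leq \pi\bigl(6-3e(\Pi)\bigr)/3 \leq 0$. With $\sum_\Pi\kappa(\Pi)=2\pi$ (which requires, and is justified by, the identity $E_e = V_b$ coming from the absence of cut-vertices), every face has $\kappa(\Pi)\leq\pi$ and only faces with $e(\Pi)=1$ and $i(\Pi)\leq 3$ have $\kappa(\Pi)>0$; since one such face cannot by itself account for $2\pi$, there must be at least two of them. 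So your framework is correct and is the standard one, but the argument as written does not go through until the corner count, the $\pi/2$ bound at boundary vertices, and the $e(\Pi)\geq 2$ case are repaired as above.
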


\begin{lemm}\label{lem:chords} Let $\Gamma_1,\Gamma_2,\dots,\Gamma_k$ be relators such that $\Gamma_{i}\cap\Gamma_{i+1}\neq\emptyset$ (indices mod $k$). If $k=3$, then $\Gamma_1\cap\Gamma_2\cap\Gamma_3\neq\emptyset$. If $k=4$, then $\Gamma_1\cap\Gamma_3\neq\emptyset$ or $\Gamma_2\cap\Gamma_4\neq\emptyset$.
\end{lemm}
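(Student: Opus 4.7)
My plan is to associate to the cyclic sequence of relators a closed path in $X$ and to analyze a $\Gamma$-reduced disk diagram bounding it. Concretely, for each $i$ taken modulo $k$ I choose a vertex $x_i \in \Gamma_i \cap \Gamma_{i+1}$, and then a locally geodesic path $\alpha_{i+1}$ inside $\Gamma_{i+1}$ from $x_i$ to $x_{i+1}$, selected as one of the two shorter arcs of the embedded relator so that $|\alpha_{i+1}| \leq |\partial \Gamma_{i+1}|/2$. The concatenation $\gamma := \alpha_1 \alpha_2 \cdots \alpha_k$ is then a closed path in $X$, and I fix a $\Gamma$-reduced disk diagram $D$ for $\gamma$, chosen to minimize the number of faces among all possible choices of the intersection vertices $x_i$, the arcs $\alpha_i$, and the diagram itself.

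The central claim will be that $D$ has no face. Suppose otherwise; then $D$ is a $(3,7)$-disk diagram, so Greendlinger's lemma yields a face $\Pi$ with exactly one exterior arc $\beta$ and at most three interior arcs. Since each interior arc is a piece of length strictly less than $|\partial\Pi|/6$, one obtains $|\beta| > |\partial\Pi|/2$. For $k \in \{3,4\}$, the arc $\beta$ contains at most one corner vertex $x_j$ in its interior, so by splitting at such a corner if necessary one finds a sub-arc of $\beta$ of length at least $|\partial \Pi|/4$ lying inside a single bounding relator $\Gamma_i$. If the relator $\Gamma_\Pi$ associated with $\Pi$ differs from $\Gamma_i$ as a subgraph of $X$, then this sub-arc is a locally geodesic piece between $\Gamma_\Pi$ and $\Gamma_i$, and the $\scc$ or $\scsc$ assumption bounds its length strictly below $|\partial \Pi|/6$, contradicting the Greendlinger estimate. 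If instead $\Gamma_\Pi = \Gamma_i$, I will replace the sub-arc of $\alpha_i$ covered by $\beta$ with the complementary arc of $\partial \Pi$ read inside $\Gamma_i$, producing a new closed path bounded by a $\Gamma$-reduced diagram with strictly fewer faces, contradicting the minimality of $D$. The hard part will be carrying out this replacement cleanly in the free product case: it must be performed within $F$-reduced paths, and the cylinder-free hypothesis from \autoref{defi:cylinder-free} will be essential to rule out pathological label-preserving automorphisms of $\Gamma_i$ that could obstruct the reduction of area.

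Once $D$ is face-free, $\gamma$ is $F$-homotopically trivial, hence lifts to a closed loop in the $F$-tree $T$: the Cayley tree of the free group in the classical case, and the Bass-Serre tree of attached Cayley graphs in the free product case. The lifts $\tilde\alpha_1,\dots,\tilde\alpha_k$ are $F$-reduced, hence geodesic segments in $T$, and they concatenate to a closed loop. For $k=3$, the three corner lifts $\tilde x_1,\tilde x_2,\tilde x_3$ admit a unique median vertex $m$ in $T$, which necessarily lies on each of the three geodesics $\tilde\alpha_i$ between pairs of them; projecting back to $X$ yields a vertex of $\Gamma_1 \cap \Gamma_2 \cap \Gamma_3$. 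For $k=4$, the four corner lifts either share a common median in $T$ (in which case all four relators meet at a single vertex and both diagonal conclusions hold) or determine an ``H-shape'' in $T$ with two distinct median vertices $v,v'$; in the latter case one of the pairs $\{\tilde\alpha_1,\tilde\alpha_3\}$ or $\{\tilde\alpha_2,\tilde\alpha_4\}$ traverses both $v$ and $v'$, delivering either $\Gamma_1 \cap \Gamma_3 \neq \emptyset$ or $\Gamma_2 \cap \Gamma_4 \neq \emptyset$.
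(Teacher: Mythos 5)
Your approach shares the basic scaffolding with the paper's (close a cyclic path $\gamma = \alpha_1\cdots\alpha_k$ in the relators, pick a minimal $\Gamma$-reduced disk diagram, apply Greendlinger), and the median/tree argument for the zero-face case is a clean, explicit way of deriving the conclusion, more detailed than the paper's terse assertion that ``the label of $\partial D$ is non-trivial in $F$.'' However, the step that is supposed to rule out faces has a genuine gap.

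You assert that for $k\in\{3,4\}$ the Greendlinger exterior arc $\beta$ contains at most one corner vertex $x_j$ in its interior, and you derive from this a sub-arc of length $\geq |\partial\Pi|/4$ inside a single $\Gamma_i$. That claim is simply false: for $k=4$, $\beta$ can pass through three corner vertices, i.e.\ intersect all four $\alpha_i$'s, in which case the pigeonhole sub-arc only has length $> |\partial\Pi|/8$, which is not a contradiction with the $C'(1/6)$ bound. (For $k=3$ the flaw is harmless: at most two corners sit inside $\beta$, so $\beta$ decomposes into at most three pieces and one of them has length $> |\partial\Pi|/6$; the stated ``at most one corner'' is still wrong but the weaker pigeonhole constant saves the argument.) The $k=4$ case genuinely needs the full strength of Greendlinger's lemma, which produces \emph{two} faces with $e(\Pi)=1$ and $i(\Pi)\leq 3$. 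Only one face can have its exterior arc touching all four $\gamma_i$'s (since that arc already swallows three of the four corner vertices), so the second Greendlinger face either has a long piece-sub-arc of its exterior arc, or only meets at most three $\gamma_i$'s, and in either case yields the contradiction. This two-face argument is exactly what the paper uses and it cannot be replaced by a single Greendlinger face.

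Two smaller issues. First, your dichotomy ``$\Gamma_\Pi \neq \Gamma_i$ versus $\Gamma_\Pi = \Gamma_i$'' is not quite the right one: a sub-arc of $\beta$ in $\Gamma_i$ is a piece iff the lifts of that sub-arc coming from $\partial\Pi$ and from $\gamma_i$ disagree, which can happen even when $\Gamma_\Pi = \Gamma_i$ (the lifts may differ by a non-trivial label-preserving automorphism); the paper sidesteps this by minimizing the number of \emph{edges} so that agreeing lifts can always be surgered away, independently of whether the two relators coincide. Second, the cylinder-freeness hypothesis plays no role here; it is used in \autoref{lem:tripleautomorphism}, not in this lemma, so invoking it to ``rule out pathological automorphisms in the free product case'' is a red herring — the free product case is handled through the $F$-reduced-path language without needing cylinder-freeness.
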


\begin{proof} Assume the claim is false for $3\leq k\leq 4$ and corresponding $\Gamma_i$. Let $\gamma=\gamma_1\gamma_2\dots\gamma_k$ be a closed path, where each $\gamma_i$ is in $\Gamma_i$. Let $D$ be a $\Gamma$-reduced diagram for $\gamma$. Choose $\gamma$ such that, among all possible choices, the number of edges of $D$ is minimal. Then $D$ is a disk diagram, and the label of $\partial D$ is non-trivial in $F$.

Consider a path $\pi$ in the intersection of a face $\Pi$ with some $\gamma_i$. Then there are lifts $\pi\to\partial\Pi\to\Gamma$ and $\pi\to\gamma_i\to\Gamma_i$. By our assumption on edge-minimality, these two lifts may never coincide: otherwise, we could replace the copy of $\pi$ that is a subpath of $\gamma_i$ by a copy of the complement of $\pi$ in $\Pi$. Thus $\gamma_i$ is a piece (with respect to $\Gamma$) and, by minimality, it is locally geodesic. Hence, by the small cancellation condition, $D$ is not a single face.

Let $\Pi$ be a face with $i(\Pi)\leq 3$ and $e(\Pi)=1$. Then the exterior arc of $\Pi$ is not a concatenation of at most 3 pieces, since the label of $\partial \Pi$ is non-trivial in $F$. Hence, if $k=3$, we have a contradiction. If $k=4$, this arc intersects all the $\gamma_i$ in edges. This can be true for at most one face $\Pi$, which contradicts Greendlinger's lemma.
\end{proof}

\begin{defi}[{\cite[Definition~2.11]{Gruber:2014wo}}]\label{defi:n-gons}
 A \emph{$(3,7)$-bigon} is a $(3,7)$-diagram with a decomposition of $\partial D$ into two reduced subpaths $\partial D=\gamma_1\gamma_2$ with the following property: Every face $\Pi$ of $D$ with $e(\Pi)=1$ for which the exterior arc in $\partial\Pi$ is contained in one of the $\gamma_i$ satisfies $i(\Pi)\geqslant 4$. 
A face $\Pi$ for which there exists an exterior arc in $\partial\Pi$ that is not contained in any $\gamma_i$ is called \emph{distinguished}.
\end{defi}

\begin{lemm}[Strebel's bigons {\cite[Theorem~35]{Strebel:1990wa}}]\label{lem:bigons}
 Let $D$ be a $(3,7)$-bigon. Then any one of its disk components is either a single face, or it has the shape $I_1$ depicted in Figure~\ref{figure:bigons}. This means $D$ has exactly two distinguished faces that have interior degree 1 and exterior degree 1. Moreover, any non-distinguished face has interior degree 2 and exterior degree 2 and intersects both sides of $D$. 
\end{lemm}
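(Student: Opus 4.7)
The plan is to proceed via combinatorial Gauss--Bonnet applied to a disk component $D$ of the bigon. If $D$ is a single face there is nothing to do, so I assume it is not, and I classify the faces of $D$ into three types: \emph{interior} faces ($e(\Pi)=0$), which satisfy $i(\Pi)\geq 7$ by the $(3,7)$-hypothesis; \emph{non-distinguished boundary} faces, which satisfy $i(\Pi)\geq 4$ by Definition~\ref{defi:n-gons}; and \emph{distinguished} faces, for which I only have the generic lower bound $d(\Pi)=i(\Pi)+e(\Pi)\geq 2$ but of which there are at most two, one at each of the two points where $\gamma_1$ and $\gamma_2$ meet.

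Next I would fix a standard angle assignment of $2\pi/3$ at every interior corner, which makes interior vertices of degree $\geq 3$ contribute non-positive curvature. With this choice, the curvature of a face $\Pi$ becomes $\kappa(\Pi)=2\pi-d(\Pi)\pi/3$, modulo a correction along the boundary that replaces each missing interior-corner contribution by a $\pi$-term. After calibrating the boundary-vertex angles so that every boundary vertex of degree $\geq 2$ also has $\kappa(v)\leq 0$, the interior faces contribute $\kappa\leq -\pi/3$ and the non-distinguished boundary faces (with $i(\Pi)\geq 4$) contribute $\kappa\leq 0$, with equality exactly when $i(\Pi)=2$ and $e(\Pi)=2$. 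Combinatorial Gauss--Bonnet then yields $\sum_v\kappa(v)+\sum_\Pi\kappa(\Pi)=2\pi$, so the full positive curvature $2\pi$ must be concentrated on the at most two distinguished faces together with the two endpoints of the decomposition $\partial D=\gamma_1\gamma_2$.

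A direct enumeration of curvature contributions then forces exactly two distinguished faces, each with $i(\Pi)=1$ and $e(\Pi)=1$, and forces every non-distinguished boundary face to realize equality, i.e.\ to have $i(\Pi)=2$ and $e(\Pi)=2$. A final planarity argument gives that these non-distinguished faces line up in a chain between the two distinguished ones; the condition $e(\Pi)=2$ combined with the fact that two consecutive exterior arcs on the same side of $\partial D$ would merge into one, contradicting that they are separate arcs, shows that each non-distinguished face has one exterior arc on $\gamma_1$ and one on $\gamma_2$, and thus intersects both sides of $D$. Reading off the face pattern yields exactly the shape $I_1$ depicted in Figure~\ref{figure:bigons}.

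The main obstacle I expect is the calibration of angles at boundary vertices and, in particular, at the two corners of the decomposition $\partial D=\gamma_1\gamma_2$. The curvature budget is tight: the total $2\pi$ must be split between at most two distinguished faces and two corner vertices, and the bookkeeping only closes because the bigon hypothesis rules out any strictly positive contribution from non-distinguished boundary faces. Striking the correct balance between the boundary-vertex and boundary-face corrections is the delicate technical heart of the argument, and is the step where I would closely follow Strebel's original treatment.
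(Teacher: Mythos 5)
The paper does not contain its own proof of this lemma; it is cited verbatim from Strebel's appendix \cite[Theorem~35]{Strebel:1990wa}, so there is no internal argument to compare against. Your curvature/Euler-characteristic strategy is in fact the route Strebel himself takes, so at the level of method you are aligned with the cited source.

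That said, your face classification contains an internal inconsistency that would derail the bookkeeping if you carried it out literally. You assert that every non-distinguished boundary face satisfies $i(\Pi)\geq 4$ ``by Definition~\ref{defi:n-gons},'' but that definition only imposes $i(\Pi)\geq 4$ on faces with $e(\Pi)=1$ whose single exterior arc lies on one of the two sides. Faces with $e(\Pi)\geq 2$ are not constrained by the bigon hypothesis at all, and indeed the conclusion you are trying to prove is precisely that the surviving non-distinguished faces have $i(\Pi)=2$ and $e(\Pi)=2$ --- which contradicts $i(\Pi)\geq 4$. Two sentences later you write that equality in $\kappa\leq 0$ occurs ``exactly when $i(\Pi)=2$ and $e(\Pi)=2$,'' which is incompatible with the lower bound you just claimed. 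To close the curvature budget correctly you need to treat the $e(\Pi)\geq 2$ faces as a separate class and argue directly that, in a disk component that is not a single face, an $e(\Pi)=2$ face with its two exterior arcs on opposite sides must have $i(\Pi)\geq 2$ (else an interior arc would disconnect the disk, pinching $\Pi$ off as a single-face component), while an $e(\Pi)\geq 2$ face with two exterior arcs on the same side, or $e(\Pi)\geq 3$, has to be ruled out by a separate planarity argument rather than absorbed as ``$\kappa\leq 0$.'' Your plan to lean on Strebel's original treatment for the delicate calibration is reasonable, but as written the sketch misstates the hypotheses on exactly the class of faces on which the whole classification turns.
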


\begin{figure}
\begin{center}
\begin{tikzpicture}[line cap=round,line join=round,x=1.0cm,y=1.0cm,line width=1pt]
\draw [shift={(2,-3)}] plot[domain=0.93:2.21,variable=\t]({1*5*cos(\t r)+0*5*sin(\t r)},{0*5*cos(\t r)+1*5*sin(\t r)});
\draw [shift={(2,5)}] plot[domain=4.07:5.36,variable=\t]({1*5*cos(\t r)+0*5*sin(\t r)},{0*5*cos(\t r)+1*5*sin(\t r)});
\draw (0,1.58)-- (0,0.42);
\draw (4,1.58)-- (4,0.42);
\draw (1,1.9)-- (1,0.1);
\draw (3,1.9)-- (3,0.1);
\draw [dotted] (1.5,1)-- (2.5,1);
\end{tikzpicture}
\end{center}
\caption{A diagram $D$ of \emph{shape $I_1$}. 
All faces except the two distinguished ones are optional, i.e. $D$ may have as few as 2 faces. 
}
\label{figure:bigons}
\end{figure}
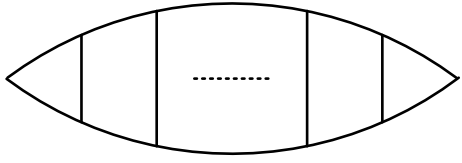

\begin{defi}\label{def:geodesic} We say a sequence of relators $\Gamma_1,\Gamma_2,\dots,\Gamma_n$ is geodesic if $\Gamma_i\cap\Gamma_{i+1}\neq\emptyset$, and if there exists no sequence of relators $\Gamma_1=\Theta_1,\Theta_2,\dots,\Theta_k=\Gamma_n$ with $\Theta_i\cap\Theta_{i+1}\neq\emptyset$ and $k<n$.

\end{defi}

Note that since we assume that every generator occurs on $\Gamma$, we have that for any two relators, there exists a geodesic sequence containing them. Moreover, given vertices $x,y\in X$ with $d_{\dot X}(x,y)=n$, there exists a geodesic sequence of relators $\Gamma_1,\Gamma_2,\dots,\Gamma_n$ with $x\in \Gamma_1$ and $y\in\Gamma_n$ by the very definition of $\dot X$.

\begin{lemm}\label{lem:convex} If $\Gamma_1,\Gamma_2,\dots,\Gamma_n$ is a geodesic sequence, then $\Gamma_1\cup\Gamma_2\cup\dots\cup\Gamma_n$ is convex in $X$.
\end{lemm}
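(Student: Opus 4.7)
The plan is to argue by induction on $n$, combining a Strebel-style bigon argument with \autoref{lem:chords} to exploit the minimality of the geodesic sequence $\Gamma_1,\ldots,\Gamma_n$.

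For the base case $n=1$, I would establish that a single relator is convex in $X$. Given $x,y\in\Gamma_1$ and an $X$-geodesic $\alpha$ from $x$ to $y$, pick a locally geodesic path $\beta$ in $\Gamma_1$ from $x$ to $y$ and take a $\Gamma$-reduced diagram $D$ for the closed path $\alpha\bar\beta$. The $\scc$ or $\scsc$ condition, together with the fact that $\alpha$ is an $X$-geodesic and $\beta$ is locally geodesic inside a single relator, forces $D$ to contain no faces via a standard bigon argument: any interior face would force one of its arcs to be a piece longer than $|\partial\Pi|/2$, contradicting $\scc$. Hence $\alpha=\beta\subseteq\Gamma_1$.

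For the inductive step, set $U':=\Gamma_1\cup\cdots\cup\Gamma_{n-1}$ and $U:=U'\cup\Gamma_n$, and suppose $U'$ is convex. Given $x,y\in U$ and an $X$-geodesic $\alpha$ between them, the cases $x,y\in U'$ and $x,y\in\Gamma_n$ are handled by induction and the base case, so I focus on $x\in U'\setminus\Gamma_n$ and $y\in\Gamma_n\setminus U'$. Pick $v\in\Gamma_{n-1}\cap\Gamma_n$; let $\beta_1$ be an $X$-geodesic from $x$ to $v$ (contained in $U'$ by induction) and $\beta_2$ an $X$-geodesic from $v$ to $y$ (contained in $\Gamma_n$ by the base case); set $\beta:=\beta_1\beta_2$. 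Take a $\Gamma$-reduced disc diagram $D$ for $\alpha\bar\beta$.

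The heart of the argument is the face-by-face analysis of $D$. Each face $\Pi$ carries an associated relator $\Gamma(\Pi)\subseteq X$, and faces with exterior arc on $\beta$ correspond to relators that are among the $\Gamma_k$. The goal is to rule out faces whose exterior arc lies entirely in $\alpha$: if such a face $\Pi$ existed, tracing the sequence of adjacencies along $\partial D$ and iteratively applying \autoref{lem:chords} (which provides chords in triangles and quadrilaterals of pairwise intersecting relators) would yield a chain in the relator graph from $\Gamma_1$ to $\Gamma_n$ of length strictly less than $n$, contradicting the geodesic hypothesis on $\Gamma_1,\ldots,\Gamma_n$. Consequently every face of $D$ corresponds to some $\Gamma_k$, which in turn forces $\alpha\subseteq U$.

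I expect the main obstacle to be the fine control on these ``$\alpha$-faces''. Being an $X$-geodesic only forces the exterior arc of such a face $\Pi$ to have length at most $|\partial\Pi|/2$ and does not, by itself, yield the Strebel bigon condition $i(\Pi)\geq 4$ needed to apply \autoref{lem:bigons} off-the-shelf. Handling these configurations will require carefully matching each $\alpha$-face to specific neighbouring $\beta$-faces and combining the combinatorial constraints from $\scc$/$\scsc$ pieces with \autoref{lem:chords} to produce the desired chord of the chain $\Gamma_1,\ldots,\Gamma_n$.
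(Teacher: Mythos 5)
Your overall plan — build a bigon between $\alpha$ and a path $\beta$ lying inside the union, show the resulting $\Gamma$-reduced diagram is a degenerate $(3,7)$-bigon, and conclude $\alpha = \beta$ — is the right shape. But you take an inductive route that the paper does not take, and your analysis misplaces where the real difficulty lies.

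First, the obstacle you flag at the end is not actually an obstacle. If a face $\Pi$ has $e(\Pi)=1$ with exterior arc $e$ contained in the $X$-geodesic $\alpha$, then $|e|\leq|\partial\Pi|/2$, so the complementary boundary subpath has length $\geq|\partial\Pi|/2$. That complementary subpath is a concatenation of $i(\Pi)$ interior arcs, each of which (in a $\Gamma$-reduced diagram) is a locally geodesic piece of length strictly less than $|\partial\Pi|/6$. Hence $i(\Pi)\cdot|\partial\Pi|/6>|\partial\Pi|/2$, forcing $i(\Pi)\geq 4$. The paper uses this off-the-shelf.

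The genuine gap in your proposal is on the $\beta$-side, and you do not address it. You assert that ``faces with exterior arc on $\beta$ correspond to relators that are among the $\Gamma_k$,'' but this is not automatic: a face touching $\beta_2\subseteq\Gamma_n$ along a \emph{piece} need not have $\Gamma_n$ as its relator. More seriously, you need that whenever a face $\Pi$ meets $\beta$ in a nontrivial path $\pi$ with $\Gamma(\Pi)\neq\Gamma_t$, that $\pi$ is a piece — otherwise you cannot get the $(3,7)$-bigon condition on the $\beta$-side, and the whole Strebel machinery is unavailable. The paper secures this by a specific construction: it does \emph{not} take $\beta$ to be a pair of geodesics; instead it takes $\sigma=\sigma_i\sigma_{i+1}\cdots\sigma_j$ with each $\sigma_t\subseteq\Gamma_t$, and crucially minimizes the number of edges of $D$ \emph{over all choices of the $\sigma_t$}. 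This minimality is what forces the two lifts of $\Pi\cap\sigma_t$ to $\Gamma$ to disagree, hence makes each such intersection a piece. Your $\beta_1,\beta_2$ are fixed geodesics and cannot be varied this way, so this argument is not available to you; the induction hypothesis (convexity of $U'$) does not supply a substitute.

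Finally, your final step is a different contradiction from the paper's, and it is left vague. The paper does not trace adjacencies with \autoref{lem:chords} to build a short chain. It uses the decomposition $\sigma=\sigma_i\cdots\sigma_j$ to bound how many consecutive $\sigma_t$ a single face can meet (at most three, by geodesicity of the sequence), deduces $i(\Pi)\geq 4$ for faces with exterior arc in $\sigma$, applies Strebel's shape-$I_1$ classification to each disk component, and then gets the contradiction from the two faces of interior degree one (which cannot meet $\sigma$ in $\leq 2$ pieces). If you insist on induction, you would still need to reproduce this structure on $\beta$, and the chosen decomposition of $\beta$ into $\beta_1\beta_2$ is too coarse to do so. The simplest fix is to abandon the induction and follow the paper's choice of $\sigma$ with the double minimization (over $|i-j|$ and over the number of edges of $D$) directly.
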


\begin{proof} Let $x$ be a vertex in $\Gamma_i$ and $y$ a vertex in $\Gamma_j$. If $i=j$, then any geodesic from $x$ to $y$ stays in $\Gamma_i$, because $\Gamma_i$ is convex by \cite[Lemma~2.15]{Gruber:2014wo}. Thus we may assume $i<j$ and, if (given $x$ and $y$) the choices of $i$ and $j$ are not unique, choose them such that $|i-j|$ is minimal. Then there exist non-trivial paths $\sigma_t$ in $\Gamma_t$ for $i\leq t\leq j$ such that $\sigma:=\sigma_i\sigma_{i+1}\dots\sigma_j$ is a path from $x$ to $y$. Let $\gamma$ be a geodesic in $X$ from $x$ to $y$. Let $D$ be a $\Gamma$-reduced diagram for $\gamma\sigma^{-1}$ and, given $x$ and $y$, among the possible choices for the $\sigma_t$, choose them such that the number of edges of $D$ is minimal. Observe that this implies that $\sigma$ is reduced.

Consider a non-trivial path $\pi$ in the intersection of a face $\Pi$ with some $\sigma_t$. Then, as in the proof of \autoref{lem:chords}, $\pi$ is a locally geodesic piece and the label of $\Pi$ is non-trivial in $F$. Since $\Gamma_1,\Gamma_2,\dots,\Gamma_n$ is a geodesic sequence, $\Pi$ can intersect at most 3 consecutive $\sigma_t$. (Recall that the 1-skeleton of any face maps to a subgraph of a relator in $X$.) Therefore, if $\Pi$ has $e(\Pi)=1$ and its exterterior arc contained in $\sigma$, then $i(\Pi)\geq 4$. The same conclusion holds if $\Pi$ has $e(\Pi)=1$ and its exterior arc contained in $\gamma$, because $\gamma$ is a geodesic in $X$. Therefore, $D$ is a $(3,7)$-bigon.

Let $\Delta$ be a disk-component of $D$. If it were a single face $\Pi$, then, as observed above, $\Pi$ would intersect $\sigma$ in at most 3 locally geodesic pieces and the geodesic $\gamma$ in a path of length at most $|\partial\Pi|/2$. This contradicts the fact that any locally geodesic piece has length less than $|\partial\Pi|/6$. Therefore, by \autoref{lem:bigons}, it has shape $I_1$ as shown in Figure~\ref{figure:bigons}. Thus any face $\Pi$ has interior degree at most 2, whence its intersection with $\sigma$ has length greater than $|\partial\Pi|/6$ -- therefore, this intersection is not a piece and not contained in a single $\sigma_t$. This shows that in $\Delta$ there may be at most one face intersecting 3 consecutive $\sigma_t$ in edges -- otherwise we would have a path $\sigma_t\sigma_{t+1}\dots\sigma_{t+k}$ contained in $k$ faces, where $i<t$ and $t+k<j$. This would contradict the fact that $\Gamma_1,\Gamma_2,\dots,\Gamma_n$ is a geodesic sequence. However, $\Delta$ has two faces with interior degree 1, neither of which can intersect $\sigma$ in at most 2 locally geodesic pieces. This is a contradiction to the existence of a disk component, whence $\gamma=\sigma$.
\end{proof}

\subsubsection{Parallels in $\dot X$}

We show that geodesic quadrangles in $\dot X$ are uniformly slim in the following sense:

\begin{defi}\label{def:parallel} 
We say two geodesic sequences of relators $\Gamma_1,\Gamma_2,\dots,\Gamma_n$ and $\Theta_1,\Theta_2,\dots,\Theta_{n'}$ are \emph{parallel} if $n=n'$, and $\Gamma_i\cap\Theta_i\neq\emptyset$ for all $i$. 
We say they are \emph{properly} parallel if $\Gamma_i\neq\Theta_i$ for all $i$.
\end{defi}

\begin{prop}\label{prop:parallel} Let $\epsilon\geq 0$, and let $\Gamma_1,\Gamma_2,\dots,\Gamma_n$ and $\Theta_1,\Theta_2,\dots,\Theta_{n'}$ be geodesic sequences such that $d_{\dot X}(\Gamma_1,\Theta_1)\leq \epsilon$ and $d_{\dot X}(\Gamma_n,\Theta_{n'})\leq \epsilon$. Then there exist $k,k',l,l'$ with $\max\{k,k',l,l'\}\leq 9\epsilon +3$ and $k+l=k'+l'$ such that $\Gamma_k,\Gamma_{k+1},\dots,\Gamma_{n-l+1}$ and $\Theta_{k'},\Theta_{k'+1},\dots,\Theta_{n'-l'+1}$ are parallel.
\end{prop}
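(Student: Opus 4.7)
The natural setting for the argument is the auxiliary graph $R$ whose vertices are the relators of $X$ and whose edges connect pairs of intersecting relators; as noted in the remark following \autoref{thm:acylindricity}, $R$ is $G(\Gamma)$-equivariantly $(1,1)$-quasi-isometric to $\dot X$ and is therefore uniformly hyperbolic by \autoref{thm:hyperbolicity}. A geodesic sequence of relators in the sense of \autoref{def:geodesic} is precisely a geodesic edge-path in $R$, and two relators have $\Gamma \cap \Theta \neq \emptyset$ if and only if $d_R(\Gamma,\Theta) \leq 1$, so parallelism in the sense of \autoref{def:parallel} is exactly $R$-distance at most $1$, index by index.

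First I would form a geodesic quadrangle in $R$ whose two short sides, produced from $\dot X$-geodesics realising $d_{\dot X}(\Gamma_1,\Theta_1)\leq \epsilon$ and $d_{\dot X}(\Gamma_n,\Theta_{n'})\leq\epsilon$, have length at most $\epsilon + c$ for a universal constant $c$ coming from the $(1,1)$-quasi-isometry. By the uniform thinness of geodesic quadrangles (a consequence of the 5-slim vertex sets of triangles in $\dot X$), for every index $i$ in a middle range $[k_0,\, n-l_0+1]$ with $k_0, l_0 = O(\epsilon)$, the vertex $\Gamma_i$ lies within $R$-distance at most a universal constant $D$ of some $\Theta_{\sigma(i)}$. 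Choosing $\sigma$ monotone and using that both long sides are $R$-geodesics, a triangle-inequality argument forces the increment $\sigma(i+1)-\sigma(i)$ to average to $1$ throughout the middle range, so that $\sigma(i)=i+c'$ for a fixed integer $c'$ on that range, up to trimming by another $O(D)$ on each side. This already yields index-by-index fellow-travelling at $R$-distance at most $D$ for the middle segment.

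The second step is to upgrade $d_R(\Gamma_i,\Theta_{\sigma(i)})\leq D$ to $\Gamma_i\cap\Theta_{\sigma(i)}\neq \emptyset$, i.e.\ $d_R\leq 1$, at the cost of a further bounded shift in indices. For this I would apply \autoref{lem:chords} iteratively to the $4$-cycles of relators of the form $\Gamma_i,\Gamma_{i+1},\Theta_{\sigma(i+1)},\Theta_{\sigma(i)}$ (subdivided via intermediate $R$-paths when the ``vertical'' distance exceeds $2$): each application produces a diagonal intersection that replaces a near-parallel pair by an exactly-parallel one after shifting one index by a unit. Performing this straightening at each end of the middle range absorbs another $O(D)$ into $k,k',l,l'$, and finally the balancing condition $k+l=k'+l'$ is arranged by enlarging the smaller side by the difference, itself $O(\epsilon)$, producing the stated bound $\max\{k,k',l,l'\}\leq 9\epsilon+3$. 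The main obstacle will be this second step: the chord lemma is inherently local and yields only one diagonal at a time, so a careful induction is needed to guarantee the total index shift is bounded uniformly in $n$ and $n'$; a cleaner alternative I would consider is to work directly in $X$, exploiting the convexity of the union of relators along a geodesic sequence (\autoref{lem:convex}) to control long diagonals in one go and thereby obtain the explicit constant $9\epsilon+3$ by tracking the slimness and quasi-isometry offsets from the first step.
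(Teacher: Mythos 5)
Your proposal takes a genuinely different route from the paper's, and its second step has a real gap. The paper never passes through an intermediate state of ``bounded but not intersecting'' fellow-travelling: instead of invoking hyperbolicity of $\dot X$, it crosses the $\epsilon$-gap at the endpoints purely combinatorially. It interpolates $K\leq\epsilon$ intermediate geodesic sequences $\overline\Xi_1,\dots,\overline\Xi_{K+1}$ between the $\Gamma$-side and the $\Theta$-side and repeatedly applies \autoref{lem:parallel_skip} to advance from $\overline\Xi_i$ to $\overline\Xi_{i+1}$, paying eight indices per step. \autoref{lem:parallel_skip} is the key mechanism: it takes two geodesic sequences whose endpoints are each bridged by a \emph{single} relator and outputs term-by-term intersecting (parallel) subsequences, and it rests on \autoref{cor:5_intersection} and \autoref{cor:intersection_implies_parallel}, which in turn depend on convexity (\autoref{lem:convex}), the $F$-tree structure of intersections (\autoref{lem:intersection_tree}, \autoref{lem:uniqueintersectionpath}), the rigidity of two-piece paths in relators (\autoref{lem:2pieceunique}), and the fixed-path argument of \autoref{lem:fixed_interesection_path}. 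No hyperbolicity estimate appears anywhere in the argument.

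The gap is in your step 2, precisely where you flag the difficulty. Knowing $d_R(\Gamma_i,\Theta_{\sigma(i)})\leq D$ for a universal constant $D$ is far from knowing $\Gamma_i\cap\Theta_{\sigma(i)}\neq\emptyset$: bounded fellow-travelling is a generic feature of hyperbolic geometry, whereas term-by-term intersection is a rigidity phenomenon specific to the small cancellation combinatorics, and this is the entire content of the proposition. \autoref{lem:chords} applies only to cycles of three or four mutually intersecting relators; the moment the vertical gap exceeds $2$ you no longer have such a cycle, and ``subdividing via intermediate $R$-paths'' reintroduces the original difficulty with no termination argument or potential function controlling the cumulative index shift. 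Moreover step 1 does not actually give you $\sigma(i)=i+c'$ on the nose — slimness only forces $|\sigma(i)-(i+c')|=O(D)$, so even setting up the straightening requires further work. To make step 2 rigorous you would essentially have to rediscover \autoref{lem:parallel_skip} and the combinatorial chain behind it, at which point step 1 (the slim-quadrangle estimate in $R$) is superfluous and, if anything, only degrades the explicit constant $9\epsilon+3$ to something depending on the hyperbolicity slack. Your closing suggestion to work in $X$ via \autoref{lem:convex} is closer in spirit to what the paper actually does, but as written it leaves all of that work still to be supplied.
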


\begin{lemm}\label{lem:intersection_tree}
Let $\Gamma_1,\dots,\Gamma_n$ and $\Theta_1,\dots,\Theta_{n'}$ be geodesic sequences such that $\Gamma_i\neq \Theta_j$ for every $i,j$. Then $T:=(\Gamma_1\cup\Gamma_2\cup\dots\cup\Gamma_n)\cap(\Theta_1\cup\Theta_2\cup\dots\cup\Theta_{n'})$ is an $F$-tree.
\end{lemm}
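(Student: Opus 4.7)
The plan is to show two things: that $T$ is convex in $X$ (hence connected), and that every $F$-reduced closed path in $T$ is $F$-homotopically trivial.

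For convexity, I would invoke Lemma~\ref{lem:convex}, which guarantees that both $\bigcup_{i=1}^n \Gamma_i$ and $\bigcup_{j=1}^{n'} \Theta_j$ are convex in $X$. Then for any vertices $x,y\in T$, any geodesic in $X$ between them lies in both convex sets, hence in $T$. In particular, $T$ is connected.

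For the triviality of closed paths in $T$, I would argue by contradiction. Suppose $\gamma$ is a closed $F$-reduced path in $T$ whose label is non-trivial in $F$, chosen of minimum length; let $D$ be a $\Gamma$-reduced diagram for $\gamma$ with minimum number of edges. The goal is to exhibit a face $\Pi$ of $D$ whose associated relator $\Gamma'_\Pi$ in $X$ coincides with both some $\Gamma_i$ and some $\Theta_j$, so that $\Gamma_i=\Theta_j$, contradicting the hypothesis. To find such a $\Pi$, I would apply Greendlinger's Lemma to $D$ (or to the subdiagram spanned by $F$-non-trivial faces in the free product case), producing a face with $e(\Pi)=1$ and $i(\Pi)\leq 3$, so that its exterior arc $\alpha\subseteq\gamma$ satisfies $|\alpha|>|\partial\Pi|/2$. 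Since $\alpha\subseteq T\subseteq\bigcup_i\Gamma_i$ (and similarly for the $\Theta_j$), one should extract from $\alpha$ a subpath of length at least $|\partial\Pi|/6$ contained in a single $\Gamma_i$. Such a subpath sits in both $\Gamma_i$ and $\Gamma'_\Pi$ and is too long to be a piece, forcing $\Gamma'_\Pi=\Gamma_i$; the same reasoning with $\Theta_j$'s gives the contradiction.

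The main obstacle is precisely this extraction step. The strategy is to exploit the geodesic-sequence structure: since $\Gamma_a\cap\Gamma_b\neq\emptyset$ forces $|a-b|\leq 1$ for a geodesic sequence, and since, by Lemma~\ref{lem:chords}, any three pairwise-intersecting relators share a common vertex, consecutive pieces $\Gamma'_\Pi\cap\Gamma_a$ and $\Gamma'_\Pi\cap\Gamma_{a+1}$ overlap at least at a vertex in the cycle $\Gamma'_\Pi$. A careful analysis of this overlapping chain of pieces along $\Gamma'_\Pi$, combined with the bounds $|\alpha|>|\partial\Pi|/2$ and ``piece length'' $<|\partial\Pi|/6$, should then force some piece to contribute a subarc of $\alpha$ of length at least $|\partial\Pi|/6$. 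An alternative formulation, should the elementary pigeonhole not immediately suffice, would run a sphere argument: fill $\gamma$ by a $\Gamma$-reduced disk diagram $D_\Gamma$ using only $\Gamma_i$-faces (feasible because $\bigcup_i\Gamma_i$ retracts appropriately onto a tree of cycles by the geodesic-sequence structure), build $D_\Theta$ analogously, and glue $D_\Gamma$ to $D_\Theta^{-1}$ along $\gamma$ to obtain a spherical diagram $S$; the hypothesis $\Gamma_i\neq\Theta_j$ rules out cross-reductions, so $S$ is $\Gamma$-reduced, and a non-empty $\Gamma$-reduced spherical $(3,7)$-diagram cannot exist by Euler-characteristic considerations, forcing $D_\Gamma$ and $D_\Theta$ to have no $F$-non-trivial faces and $\gamma$ to be $F$-trivial.
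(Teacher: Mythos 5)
Your approach differs substantially from the paper's. The paper avoids diagrams entirely: it introduces a weight $\omega(\gamma)$ measuring how many consecutive $\Gamma_i$'s are needed to contain a closed non-$F$-trivial path $\gamma$, takes a cyclically $F$-reduced $\gamma$ of minimal weight, and performs a local replacement of a subarc of $\gamma$ by a path in $\Gamma_i\cap\Gamma_{i+1}$ to force $\gamma$ into a single $\Gamma_{i_0}$, then repeats with the $\Theta_j$'s to force $\gamma\subseteq\Gamma_{i_0}\cap\Theta_{j_0}$, so that $\gamma$ is a piece and hence $F$-trivial. No Greendlinger, no diagrams (those appear elsewhere in the paper, e.g.\ in Lemmas~\ref{lem:chords} and \ref{lem:convex}, whose conclusions are then used here as black boxes).

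The gap you flag yourself -- the ``extraction step'' -- is a genuine one, and your sketch does not close it. After Greendlinger you have an exterior arc $\alpha\subseteq\partial\Pi$ with $|\alpha|>|\partial\Pi|/2$, lying in $\Gamma'_\Pi\cap\bigcup_i\Gamma_i$. Geodesicity does bound the indices that can occur (at most three consecutive $\Gamma_a,\Gamma_{a+1},\Gamma_{a+2}$ meet $\Gamma'_\Pi$), and any simple subpath of $\alpha$ inside a single $\Gamma_k\cap\Gamma'_\Pi$ is a piece of length $<|\partial\Pi|/6$. But nothing you say prevents $\alpha$ from oscillating many times between $\Gamma_a$ and $\Gamma_{a+1}$, producing arbitrarily many short maximal subarcs each shorter than $|\partial\Pi|/6$, with total length exceeding $|\partial\Pi|/2$. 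The pigeonhole ``one subarc of length $\geq|\partial\Pi|/6$'' simply does not follow. You would need an additional argument bounding the number of returns of $\alpha$ to each $\Gamma_k$ (e.g.\ by exploiting convexity of $\Gamma_k\cap\Gamma_{k+1}$ and that $\partial\Pi$ is a simple closed path), and this is precisely the kind of analysis the paper's weight-function argument is designed to sidestep. Two further loose ends in this route: $\gamma$ need not be simple, so $D$ need not be a disk diagram and Greendlinger must be applied to disk components with care about where their exterior arcs lie; and in the free-product case you must account for $F$-trivial faces (the paper's lemma~\ref{lem:chords}/\ref{lem:convex} proofs show the extra bookkeeping required).

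Your alternative sphere argument is a different and in principle viable strategy, but as written it is only a sketch and leans on two unproven assertions: that one can fill $\gamma$ by a $\Gamma$-reduced diagram $D_\Gamma$ using only faces coming from the $\Gamma_i$ (this requires knowing that $\bigcup_i\Gamma_i$ with all $\Gamma_i$-cells attached is simply connected -- plausible via van Kampen's theorem and the line nerve given by geodesicity, but it needs to be said), and that the glued sphere is $\Gamma$-reduced, which requires checking that no cancellations occur internally to $D_\Gamma$ or $D_\Theta$ and not merely across $\gamma$. Neither observation is automatic. In short: the route is genuinely different, the key lemma cited (Greendlinger or Euler characteristic on spheres) is not the one the paper uses, and the extraction step remains an open gap in the proposal as submitted.
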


\begin{proof} Since $T$ the intersection of two convex subgraphs (see \autoref{lem:convex}), it is connected. 
We proceed by contradiction: assume there is a closed path in $T$ that is not $F$-trivial. 
Given a path $\gamma$ in $T$, we define its weight to be  $\omega(\gamma):=(k,e)$, where $k$ is minimal such that $\gamma$ is contained in $\Gamma_1\cup\Gamma_2\cup\dots\cup\Gamma_k$ and $e$ is the number of edges of $\gamma$ that are in $\Gamma_k$ but not in $\Gamma_{k-1}$. Consider $\N\times\N$ with the lexicographic order. Let $\gamma$ be a closed path in $T$, not $F$-trivial, such that $\omega(\gamma)$ is minimal among all such paths. We choose $\gamma$ to be cyclically $F$-reduced.

We claim: $\gamma$ is contained in a single $\Gamma_{i_0}$. Suppose it is not. Let $\omega(\gamma)=(k,e)$. Then (some cyclic shift of) $\gamma$ contains a subpath $\pi$ with:  $\iota \pi$ and $\tau \pi$ are contained in $\Gamma_{k-1}\cap\Gamma_{k}$, all edges of $\pi$ are in $\Gamma_{k}$ but not in $\Gamma_{k-1}$, $\pi$ has at least one edge, and $\gamma$ does not contain an edge of $\Gamma_{k+1}$. We have that $T$ is convex, and so are $\Gamma_{k-1}$ and $\Gamma_{k}$.  Therefore, $T\cap\Gamma_{k-1}\cap\Gamma_{k}$ is convex and hence connected, and there exists an $F$-reduced path $\pi'$ in $T\cap\Gamma_{k-1}\cap\Gamma_{k}$ with the same endpoints as $\pi$.  Now we may replace the subpath $\pi$ of (the cyclic shift of) $\gamma$ by $\pi'$ to obtain a closed path $\gamma'$ with either $\omega(\gamma')=(k-1,e')$ for some $e'$ or $\omega(\gamma')=(k,e')$ for some $e'<e$. In any case, $\omega(\gamma')<\omega(\gamma)$ whence, by our minimality assumption, $\gamma'$ is $F$-trivial. We may write a cyclic shift of $\gamma'$ as $\pi'\eta$, where both $\pi'$ and $\eta$ are $F$-reduced. This implies that ${\pi'}^{-1}$ and $\eta$ are in the same $F$-equivalence class of $F$-reduced paths. Since $\pi'$ is contained in $\Gamma_{k-1}\cap\Gamma_{k}$, so is $\eta$. We deduce that the original path $\gamma$ was contained in $\Gamma_{k}$, contradicting our assumption.

Let $T_{i_0}:=\Gamma_{i_0}\cap T$. Analogously to above, we define a weight function $\omega'$ with respect to the geodesic sequence $\Theta_1,\dots,\Theta_{n'}$ and show that a closed path $\theta$ in $T_{i_0}$ that is not $F$-trivial and that minimizes $\omega'$ is contained in a single $\Theta_{j_0}$. Thus, such a $\theta$ is contained in $\Gamma_{i_0}\cap\Theta_{j_0}$. But any path in this intersection is a piece, whence $\theta$ cannot exist, and $T$ is an $F$-tree.
\end{proof}

\begin{lemm}\label{lem:uniqueintersectionpath} Let $n\geq 3$, and let $\Gamma_1,\dots,\Gamma_n$ and $\Gamma_1,\Theta_2,\Theta_3,\dots,\Theta_{n-1},\Gamma_{n}$ be geodesic sequences such that $\Gamma_i\neq \Theta_j$ for every $2\leq i,j\leq {n-1}$. Then there is an up to $F$-equivalence unique $F$-reduced path in $(\Gamma_{2}\cup\Gamma_3\cup\dots\cup\Gamma_{n-1})\cap(\Theta_2\cup\Theta_3\cup\dots\cup\Theta_{n-1})$ that connects a vertex in $\Gamma_1$ to a vertex in $\Gamma_n$ and that does not contain any edge in $\Gamma_1$ or in $\Gamma_n$.
\end{lemm}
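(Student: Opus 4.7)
The plan is to combine the tree structure from Lemma~\ref{lem:intersection_tree} with Lemma~\ref{lem:chords} to first produce an $F$-tree containing the range of the desired path, then to locate suitable endpoints for the path in $\Gamma_1$ and $\Gamma_n$, and finally to deduce uniqueness automatically from tree-ness. Concretely, I would first apply Lemma~\ref{lem:intersection_tree} to the two geodesic subsequences $\Gamma_2,\dots,\Gamma_{n-1}$ and $\Theta_2,\dots,\Theta_{n-1}$, which are disjoint by hypothesis. This gives that
\[
T \;:=\; \bigl(\Gamma_2\cup\cdots\cup\Gamma_{n-1}\bigr)\cap\bigl(\Theta_2\cup\cdots\cup\Theta_{n-1}\bigr)
\]
is an $F$-tree, hence any two of its vertices are joined by an $F$-reduced path that is unique up to $F$-equivalence.

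The next step is to locate a vertex of $T$ in $\Gamma_1$ and a vertex of $T$ in $\Gamma_n$. Because $\Gamma_1,\dots,\Gamma_n$ is geodesic, $\Gamma_1\cap\Gamma_i=\emptyset$ for every $i\geq 3$ (otherwise $d_{\dot X}(\Gamma_1,\Gamma_i)\leq 1<i-1$). Similarly $\Gamma_1\cap\Theta_j=\emptyset$ for $j\geq 3$. Thus any vertex of $T\cap\Gamma_1$ must lie in $\Gamma_1\cap\Gamma_2\cap\Theta_2$, and by the triangle case of Lemma~\ref{lem:chords} this triple intersection is nonempty as soon as $\Gamma_2\cap\Theta_2\neq\emptyset$. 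Symmetrically one needs $\Gamma_{n-1}\cap\Theta_{n-1}\neq\emptyset$ and then obtains $w\in\Gamma_n\cap\Gamma_{n-1}\cap\Theta_{n-1}\subset T\cap\Gamma_n$.

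The main obstacle is therefore showing $\Gamma_2\cap\Theta_2\neq\emptyset$ (and symmetrically). The base case $n=3$ is immediate: the four relators $\Gamma_1,\Gamma_2,\Gamma_3,\Theta_2$ pairwise intersect cyclically, and geodesicity rules out $\Gamma_1\cap\Gamma_3\neq\emptyset$, so the 4-cycle part of Lemma~\ref{lem:chords} forces $\Gamma_2\cap\Theta_2\neq\emptyset$. For $n\geq 4$ the short-cycle version of Lemma~\ref{lem:chords} does not apply directly, and I would instead form a closed loop by concatenating: a locally geodesic path $\gamma$ from $v_0\in\Gamma_1\cap\Gamma_2$ to $v_{n-1}\in\Gamma_{n-1}\cap\Gamma_n$ visiting each $\Gamma_i\cap\Gamma_{i+1}$, an analogous locally geodesic path $\sigma$ along the $\Theta_j$'s from $u_0\in\Gamma_1\cap\Theta_2$ to $u_{n-1}\in\Theta_{n-1}\cap\Gamma_n$, and closing paths $\alpha\subset\Gamma_1$, $\beta\subset\Gamma_n$. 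Choose these paths so as to minimize the total number of edges of a $\Gamma$-reduced diagram $D$ for the resulting loop. As in the proof of Lemma~\ref{lem:convex}, minimality and geodesicity of the two sequences force $D$ to be a $(3,7)$-bigon in the sense of Definition~\ref{defi:n-gons}; Strebel's classification (Lemma~\ref{lem:bigons}) then restricts $D$ to shape $I_1$, and any internal face of such a diagram must share arcs with subpaths of $\gamma$ coming from $\Gamma_2$ and of $\sigma$ coming from $\Theta_2$, producing a vertex of $\Gamma_2\cap\Theta_2$.

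Once the endpoints $v\in\Gamma_1\cap T$ and $w\in\Gamma_n\cap T$ are in hand, the tree-ness of $T$ delivers a path $\pi$ in $T$ from $v$ to $w$, unique up to $F$-equivalence. If $\pi$ contains an edge of $\Gamma_1$ I move $v$ to the last vertex of $\pi$ lying in $T\cap\Gamma_1$ and restrict $\pi$ accordingly, and symmetrically for $\Gamma_n$; this does not affect uniqueness up to $F$-equivalence. Finally, the uniqueness assertion in the lemma follows because any two candidates $\pi,\pi'$ (with possibly distinct endpoints in $\Gamma_1$ and $\Gamma_n$) can be closed up by connecting paths inside $\Gamma_1$ and $\Gamma_n$; the resulting loop lives in $T\cup\Gamma_1\cup\Gamma_n$, which is a tree of convex pieces attached to $T$ at finite intersections, and hence is $F$-homotopically trivial by an argument identical to that of Lemma~\ref{lem:intersection_tree}.
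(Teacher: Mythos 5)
Your proof overcomplicates both the existence and the uniqueness parts, and the uniqueness argument has a genuine gap.

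For existence, the key observation you missed is that Lemma~\ref{lem:convex} applies not only to the ``inner'' sequences but also to the two full sequences: $\Gamma_1\cup\Gamma_2\cup\dots\cup\Gamma_n$ and $\Gamma_1\cup\Theta_2\cup\dots\cup\Theta_{n-1}\cup\Gamma_n$ are \emph{both} convex. Their intersection is therefore convex, hence connected, and it contains both $\Gamma_1$ and $\Gamma_n$; so it contains a path from $\Gamma_1$ to $\Gamma_n$ for free. Inspecting the first edge of such a path leaving $\Gamma_1$ immediately gives a point of $\Gamma_2\cap\Theta_2$, and symmetrically for $\Gamma_{n-1}\cap\Theta_{n-1}$. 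This replaces your entire secondary argument. Your proposed $(3,7)$-bigon argument for $n\geq 4$ is not carried out in detail; it may well be salvageable, but it duplicates work already encapsulated in Lemma~\ref{lem:convex}, and the step from ``$D$ has shape $I_1$'' to ``some face produces a vertex of $\Gamma_2\cap\Theta_2$'' is not a one-line consequence and would need a careful analysis of how the faces of $D$ sit relative to the decomposition of $\partial D$.

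For uniqueness, the claim that $T\cup\Gamma_1\cup\Gamma_n$ is an $F$-tree (or that loops in it are $F$-homotopically trivial) is false in general: a relator is the embedded image of a component of $\Gamma$, which may contain closed paths whose labels are non-trivial in $F$ (this is precisely what happens in the graphical case, e.g.\ for expanders). So a loop living in $\Gamma_1$ can fail to be $F$-homotopically trivial. Moreover, even if you restrict the connecting subpaths $\alpha,\beta$ to lie in $T\cap\Gamma_1$ and $T\cap\Gamma_n$ (so the whole loop sits inside the $F$-tree $T$), $F$-triviality of $\alpha\pi\beta\pi'^{-1}$ alone does not yield $F$-equivalence of $\pi$ and $\pi'$ without further argument that $\alpha$ and $\beta$ themselves are $F$-trivial. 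The clean argument is to observe that $T\cap\Gamma_1$ and $T\cap\Gamma_n$ are \emph{disjoint} sub-$F$-trees of the $F$-tree $T$ (disjoint because $\Gamma_1\cap\Gamma_n=\emptyset$ by geodesicity with $n\geq 3$), and a standard fact about $F$-trees gives a unique (up to $F$-equivalence) $F$-reduced path between two disjoint sub-$F$-trees that meets each only in a single vertex.
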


\begin{proof} As both $\Gamma_1\cup\Gamma_2\cup\dots\cup\Gamma_{n-1}\cup\Gamma_n$ and $\Gamma_1\cup\Theta_2\cup\dots\cup\Theta_{n-1}\cup\Gamma_n$ are convex by \autoref{lem:convex}, their intersection is convex and, in particular, connected. Hence, the intersection contains a path $\alpha$ connecting a vertex of $\Gamma_1$ to a vertex of $\Gamma_n$. Observe that the first edge of $\alpha$ outside $\Gamma_1$ must be contained in $\Gamma_2\cap\Theta_2$ and, likewise, the last edge outside $\Gamma_n$ must be contained in $\Gamma_{n-1}\cap\Theta_{n-1}$. Denote $T:=(\Gamma_2\cup\Gamma_{3}\cup\dots\cup\Gamma_{n-1})\cap(\Theta_2\cup\Theta_3\cup\dots\cup\Theta_{n-1})$. By Lemmas~\ref{lem:convex} and \ref{lem:intersection_tree}, $T$ is a convex $F$-tree, and it intersects both convex graphs $\Gamma_1$ and $\Gamma_n$. Since $T\cap\Gamma_1$ and $T\cap\Gamma_2$ are sub-$F$-trees of $T$, there is an up to $F$-equivalence unique $F$-reduced path in $T$ connecting them as in the claim.
\end{proof}

\begin{lemm}\label{lem:2pieceunique}
Let $\Gamma_1,\Gamma_2,\Gamma_3$ be a geodesic sequence of relators. Up to $F$-equivalence, there exists at most one $F$-reduced path that is a concatenation of at most two pieces in $\Gamma_2$ that intersects $\Gamma_1$ exactly in a vertex and $\Gamma_3$ exactly in a vertex.
\end{lemm}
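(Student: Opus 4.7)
The plan is to construct a closed loop from $\alpha$ and $\beta$ that lies entirely in $\Gamma_2$, and then to analyze a $\Gamma$-reduced disk diagram for it using the piece-counting argument standard in graphical small cancellation. Specifically, given two such paths $\alpha$ and $\beta$, I would let $\sigma_1$ be an $F$-reduced path in $\Gamma_1\cap\Gamma_2$ from $\iota\beta$ to $\iota\alpha$ and $\sigma_3$ an analogous path in $\Gamma_2\cap\Gamma_3$ from $\tau\alpha$ to $\tau\beta$ (unique up to $F$-equivalence since $\Gamma_1\cap\Gamma_2$ and $\Gamma_2\cap\Gamma_3$ are pieces, hence single arcs). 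The resulting closed path $\ell:=\sigma_1\alpha\sigma_3\beta^{-1}$ is contained in $\Gamma_2$, and I would consider a $\Gamma$-reduced disk diagram $D$ for $\ell$.

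The key observation is that $\partial D$ decomposes into at most six arcs, each of which is a piece: the two subpaths $\sigma_1\subset\Gamma_1\cap\Gamma_2$ and $\sigma_3\subset\Gamma_2\cap\Gamma_3$ are pieces (being subpaths of intersections of distinct relators), and each of the at most two arcs forming $\alpha$ and each of the at most two forming $\beta$ are pieces by hypothesis. I would then show that $D$ has no faces: if $D$ were a single face $\Pi$, then $\partial\Pi$ would be a concatenation of at most six pieces, each of length strictly less than $|\partial\Pi|/6$ by the $C'(1/6)$-condition, a contradiction; if $D$ has more than one face, Greendlinger's lemma yields a face $\Pi$ with $e(\Pi)=1$ and $i(\Pi)\leq 3$, whose exterior arc is a subpath of one of the six boundary pieces and is therefore itself a piece, so $\partial\Pi$ is a concatenation of at most four pieces, again contradicting $C'(1/6)$.

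Consequently $\ell$ has trivial label in $F$, so $\alpha$ and $\sigma_1^{-1}\beta\sigma_3^{-1}$ are two $F$-reduced paths starting at $\iota\alpha$ with the same label in $F$, and therefore coincide as paths in $X$. Comparing first edges at $\iota\alpha$: the first edge of $\alpha$ lies in $\Gamma_2\setminus\Gamma_1$ by hypothesis, whereas the first edge of $\sigma_1^{-1}$ (if non-trivial) lies in $\Gamma_1\cap\Gamma_2$; since distinct edges at the same vertex carry distinct labels, this forces $\sigma_1$ to be trivial, i.e.\ $\iota\alpha=\iota\beta$. An analogous argument on the terminal edges shows $\sigma_3$ is trivial, hence $\alpha$ and $\beta$ are $F$-equivalent. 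The main obstacle I anticipate is correctly bookkeeping the piece decomposition on $\partial D$ in the free-product case, where one must verify that the local geodesicity required of pieces interacts properly with $F$-reducedness of the loop $\ell$ at the junctions between the six boundary arcs, and possibly invoke the cylinder-free hypothesis to control the intersections $\Gamma_1\cap\Gamma_2$ and $\Gamma_2\cap\Gamma_3$.
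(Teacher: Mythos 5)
Your overall strategy matches the paper's: connect the two candidate paths by arcs $\sigma_1\subset\Gamma_1\cap\Gamma_2$ and $\sigma_3\subset\Gamma_2\cap\Gamma_3$ to form a closed loop $\ell$ in $\Gamma_2$ that is a concatenation of at most six pieces, show $\ell$ is $F$-trivial, and then deduce $F$-equivalence. However, your execution of the $F$-triviality step has a genuine gap. You assert that Greendlinger's lemma yields a face $\Pi$ with $e(\Pi)=1$, $i(\Pi)\leq 3$, ``whose exterior arc is a subpath of one of the six boundary pieces.'' This is not part of Greendlinger's lemma and is in fact generically false: since $i(\Pi)\leq 3$ and each interior arc is a piece of length $<|\partial\Pi|/6$, the exterior arc of a Greendlinger face has length strictly greater than $|\partial\Pi|/2$, whereas each of your six boundary arcs has length strictly less than $|\partial\Pi|/6$. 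The exterior arc therefore \emph{must} span at least four of your six boundary arcs, so the decomposition of $\partial\Pi$ into at most four pieces does not follow, and the contradiction you want does not come for free.

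The paper avoids the diagram entirely: since $\ell$ is a closed path lying inside a single relator $\Gamma_2$ and is a concatenation of at most six pieces, its cyclic $F$-reduction, if non-trivial in $F$, would contain a simple closed subpath of $\Gamma_2$ that is itself a concatenation of at most six pieces, directly contradicting the $C'(1/6)$-condition. Your diagram approach can be salvaged, but differently from what you propose: the exterior arc $e$ of the Greendlinger face lies in both $\Pi$ and $\Gamma_2$, so if the two lifts to $\Gamma$ are not automorphism-equivalent, $e$ is itself a single (long) piece and $|e|>|\partial\Pi|/2>|\partial\Pi|/6$ gives the contradiction; if they \emph{are} equivalent, you need an edge-minimality argument as in the paper's Lemma~\ref{lem:chords} to push $\Pi$ through. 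You never address this latter case. Finally, your conclusion that $\sigma_1$ and $\sigma_3$ are trivial by comparing first and last edges is sound in the free group case but fails in the free product case (two $F$-equivalent $F$-reduced paths need not share edges). The paper instead uses the observation that $F$-equivalence preserves the property of intersecting $\Gamma_1$ (resp.\ $\Gamma_3$) in exactly a vertex, which works uniformly; you partially anticipate this issue at the end of your write-up but do not resolve it.
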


\begin{proof}
Suppose there are $F$-reduced paths $\pi$ and $\hat\pi$ as in the statement. Since $\Gamma_i\cap\Gamma_{i+1}$ is connected, there exists an $F$-reduced path $\rho$ in $\Gamma_1\cap\Gamma_2$ from $\iota \pi$ to $\iota \hat \pi$; similarly, there exists an $F$-reduced path $\hat \rho$ in $\Gamma_2\cap\Gamma_3$ from $\tau\hat\pi$ to $\tau \pi$. By construction, the concatenation $\rho\hat \pi\hat \rho$ is $F$-reduced. The path $\rho\hat \pi \hat \rho \pi^{-1}$ is closed, and it is a concatenation of at most 6 pieces. Therefore, it is $F$-homotopically trivial. Hence the two $F$-reduced paths $\rho\hat \pi\hat \rho$ and $\pi$ are $F$-equivalent. Observe that the property of intersecting $\Gamma_1$, respectively $\Gamma_3$, in exactly a vertex is preserved by $F$-equivalence. Thus, $\rho$ and $\hat \rho$ must be trivial (i.e. length 0), and $\pi$ and $\hat \pi$ are $F$-equivalent.
\end{proof}

\begin{lemm}\label{lem:fixed_interesection_path} Let $\Gamma_1,\Gamma_2,\dots,\Gamma_n$ be a geodesic sequence of relators with $n\geq 5$. Then there exists a geodesic path $\pi$ in $X$ contained in $\Gamma_3\cup\Gamma_4\cup\dots\cup\Gamma_{n-2}$ intersecting both $\Gamma_2$ and $\Gamma_{n-1}$ in exactly a vertex each such that: if $\Theta_1,\Theta_2,\dots,\Theta_n$ is a properly parallel geodesic sequence for $\Gamma_1,\Gamma_2,\dots,\Gamma_n$, then $\Theta_1\cup\Theta_2\cup\dots\cup\Theta_{n}$ contains $\pi$. 
\end{lemm}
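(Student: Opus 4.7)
The plan is to construct $\pi$ as a geodesic in $X$ confined to the convex subgraph $\Gamma_3\cup\dots\cup\Gamma_{n-2}$, and then force it into any properly parallel $\Theta$-union via convexity. By \autoref{lem:convex} the union $Y:=\Theta_1\cup\dots\cup\Theta_n$ is convex in $X$, so once both endpoints of $\pi$ lie in $Y$, the geodesic $\pi$ itself must lie in $Y$. The task therefore reduces to choosing $\pi$, together with its endpoints in $\Gamma_2\cap\Gamma_3$ and $\Gamma_{n-2}\cap\Gamma_{n-1}$, canonically from the $\Gamma$-data so that they automatically land in every such $Y$.

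First I would build $\pi$ as follows. Take a geodesic $\gamma$ of $X$ from a vertex of $\Gamma_1$ to a vertex of $\Gamma_n$; by \autoref{lem:convex} we have $\gamma\subset\Gamma_1\cup\dots\cup\Gamma_n$. Let $x$ be the last vertex of $\gamma$ in $\Gamma_2$ and $y$ the first vertex of $\gamma$ after $x$ in $\Gamma_{n-1}$, and define $\pi$ to be the subpath of $\gamma$ from $x$ to $y$. Because the geodesic sequence condition (\autoref{def:geodesic}) gives $\Gamma_i\cap\Gamma_j=\emptyset$ whenever $|i-j|\geq 2$, the extremality of $x$ and $y$ forces $x\in\Gamma_2\cap\Gamma_3$, $y\in\Gamma_{n-2}\cap\Gamma_{n-1}$, $\pi\cap\Gamma_2=\{x\}$, $\pi\cap\Gamma_{n-1}=\{y\}$, and $\pi\subset\Gamma_3\cup\dots\cup\Gamma_{n-2}$. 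As a subpath of $\gamma$, $\pi$ is itself a geodesic in $X$.

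For the parallel inclusion, I would combine \autoref{lem:chords} with the uniqueness results \autoref{lem:intersection_tree} and \autoref{lem:uniqueintersectionpath}. Applying \autoref{lem:chords} to the 4-cycle $(\Gamma_2,\Gamma_3,\Theta_3,\Theta_2)$, whose consecutive pairwise intersections are all non-empty by parallelism together with the adjacencies $\Gamma_2\cap\Gamma_3\neq\emptyset$ and $\Theta_2\cap\Theta_3\neq\emptyset$, yields that either $\Gamma_2\cap\Theta_3\neq\emptyset$ or $\Gamma_3\cap\Theta_2\neq\emptyset$; either outcome produces, via the $k=3$ case of \autoref{lem:chords} on the resulting three pairwise-intersecting relators, a vertex of $\Gamma_2\cap\Gamma_3\cap Y$, and symmetrically a vertex of $\Gamma_{n-2}\cap\Gamma_{n-1}\cap Y$. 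To upgrade this $\Theta$-dependent non-emptiness to the statement $\pi\subset Y$, I would invoke the $F$-tree structure of \autoref{lem:intersection_tree} and the uniqueness of the $F$-reduced path from \autoref{lem:uniqueintersectionpath}, applied to the sub-sequences $\Gamma_2,\Gamma_3,\dots,\Gamma_{n-1}$ and $\Gamma_2,\Theta_3,\dots,\Theta_{n-2},\Gamma_{n-1}$, to conclude that the $F$-reduced path in $(\Gamma_3\cup\dots\cup\Gamma_{n-2})\cap Y$ connecting $\Gamma_2$ to $\Gamma_{n-1}$ is $F$-equivalently unique and, through \autoref{lem:2pieceunique} applied to consecutive triples $(\Gamma_{i-1},\Gamma_i,\Gamma_{i+1})$, coincides with the $\pi$ already constructed.

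The hardest step is turning the $\Theta$-dependent endpoints produced by \autoref{lem:chords} into a canonical choice determined by $\Gamma$ alone. The obstruction is twofold: one must verify that the auxiliary sequence $\Gamma_2,\Theta_3,\dots,\Theta_{n-2},\Gamma_{n-1}$ is actually geodesic (which requires an additional case analysis using \autoref{lem:chords}), and one must handle the degenerate situations in which some $\Theta_i$ coincides with $\Gamma_{i-1}$ or $\Gamma_{i+1}$ -- the only permissible coincidences between two properly parallel geodesic sequences, since any other would yield a shortcut -- which prevent direct application of \autoref{lem:intersection_tree}. In such degenerate cases the plan is to shorten the effective sequences and re-apply \autoref{lem:uniqueintersectionpath} and \autoref{lem:2pieceunique} to the remaining non-degenerate portions, following the case-analytic template already present in the proofs of those two lemmas.
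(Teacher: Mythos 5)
Your construction of $\pi$ does not achieve the canonicity that the lemma demands, and this is a genuine gap. You take an arbitrary geodesic $\gamma$ from $\Gamma_1$ to $\Gamma_n$ and extract $\pi$ as a subpath. But its endpoints $x$ and $y$ are then arbitrary vertices of $\Gamma_2\cap\Gamma_3$ and $\Gamma_{n-2}\cap\Gamma_{n-1}$ respectively, and these intersections are pieces that may contain many vertices. The lemma asserts that a \emph{single} path $\pi$ works for \emph{all} properly parallel $\Theta$-sequences simultaneously, so the choice of $\pi$ must be pinned down $\Theta$-independently; an arbitrary $\pi$ need not lie in $\Theta_1\cup\dots\cup\Theta_n$. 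Your application of \autoref{lem:chords} to the 4-cycle $(\Gamma_2,\Gamma_3,\Theta_3,\Theta_2)$ produces \emph{some} vertex of $\Gamma_2\cap\Gamma_3\cap Y$, but this vertex depends on $\Theta$, and there is no argument that it agrees with $x$.

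The step intended to close this gap does not hold up. You invoke \autoref{lem:uniqueintersectionpath} to say the $F$-reduced path in $(\Gamma_3\cup\dots\cup\Gamma_{n-2})\cap Y$ from $\Gamma_2$ to $\Gamma_{n-1}$ is unique, and then claim via \autoref{lem:2pieceunique} ``applied to consecutive triples'' that this unique path coincides with $\pi$. But the uniqueness in \autoref{lem:uniqueintersectionpath} concerns paths already lying in the intersection with $Y$, which is precisely what we are trying to establish for $\pi$ -- so this is circular. Moreover \autoref{lem:2pieceunique} applies only to paths that are concatenations of \emph{at most two pieces} in a middle relator, and there is no reason the portion of your $\pi$ inside a given $\Gamma_i$ is a 2-piece path; it is merely a geodesic subpath. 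The missing idea is the one the paper uses: one must \emph{choose} the endpoints of $\pi_0$ canonically, namely $x_0$ as the endpoint in $\Gamma_1$ of the (unique, if it exists, by \autoref{lem:2pieceunique}) $F$-reduced 2-piece path in $\Gamma_2$ joining $\Gamma_1$ to $\Gamma_3$, and symmetrically for $y_0$. The case analysis then shows that whenever $\Gamma_2$ fails to meet both $\Theta_1$ and $\Theta_3$, the intersections with the $\Theta$-relators manufacture exactly such a 2-piece path inside $Y$, which by \autoref{lem:2pieceunique} must be the one defining $x_0$; and when $\Gamma_2$ does meet both, one instead argues via convexity of the hybrid sequence $\Gamma_2,\Theta_3,\dots$. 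Without this canonical choice of endpoints, the inclusion $\pi\subset Y$ for all $\Theta$ cannot be derived.
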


\begin{proof} If there is an $F$-reduced path from $\Gamma_1$ to $\Gamma_3$ in $\Gamma_2$ intersecting $\Gamma_1$ and $\Gamma_3$ only in a vertex, respectively, that is made up of at most 2 pieces, let $x_0$ be the first vertex in it (i.e. the one still in $\Gamma_1$); otherwise let $x_0$ be any vertex in $\Gamma_1\cap\Gamma_2$. If there is an $F$-reduced path from $\Gamma_{n-2}$ to $\Gamma_n$ in $\Gamma_{n-1}$ made up of 2 pieces, let $y_0$ be the last vertex in it (i.e. the one already in $\Gamma_n$); otherwise let $y_0$ be any vertex in $\Gamma_{n-1}\cap\Gamma_n$. Let $\pi_0$ be a geodesic in $X$ from $x_0$ to $y_0$. By convexity, $\Gamma_2\cup\Gamma_3\cup\dots\cup\Gamma_{n-1}$ contains $\pi_0$. Let $\pi$ be the maximal subpath that intersects $\Gamma_2$ only in a vertex and $\Gamma_{n-1}$ only in a vertex. Let $x$ be its initial vertex and $y$ its terminal vertex.

\medskip

\noindent {\bf Case 1.} Suppose $\Gamma_2$ does not intersect both $\Theta_1$ and $\Theta_3$, and $\Gamma_{n-1}$ does not intersect both $\Theta_{n-2}$ and $\Theta_n$. First, assume $\Gamma_2$ does not intersect $\Theta_1$ but does intersect $\Theta_3$. Then $\Theta_2$ intersects $\Gamma_1$ by \autoref{lem:chords}. Using again \autoref{lem:chords}, we deduce that there exist vertices $v_1$ in  $\Gamma_1\cap\Gamma_2\cap\Theta_2$, $v_2$ in $\Gamma_2\cap\Theta_2\cap\Theta_3$, and $v_3$ in $\Gamma_2\cap\Gamma_3\cap\Theta_3$. Hence, there is an $F$-reduced path in $\Gamma_2\cap\Theta_2$ from $v_1$ to $v_2$ and an $F$-reduced path in $\Gamma_2\cap\Theta_3$ from $v_2$ to a vertex $v_3$, and we may assume that both paths do not contain edges of $\Gamma_1\cup\Gamma_3$. Note that each of the two paths is a piece. The concatenation of the two paths (after possibly performing a reduction) is an $F$-reduced path as in \autoref{lem:2pieceunique}, whence it must contain $x_0$, because $F$-equivalence preserves endpoints. Therefore, $\Theta_1\cup\Theta_2\cup\Theta_3$ contains $x_0$. 

The case that $\Gamma_2$ does not intersect $\Theta_3$ but does intersect $\Theta_1$ is symmetric. The case that $\Gamma_2$ intersects neither $\Theta_1$ nor $\Theta_3$ produces a path from $\Gamma_1$ to $\Gamma_3$ that lies in $\Gamma_2\cap\Theta_2$, i.e.\ that is a single piece, and we again deduce that $\Theta_1\cup\Theta_2\cup\Theta_3$ contains $x_0$. 

Symmetrically, we also deduce that $\Theta_{n-2}\cup\Theta_{n-1}\cup\Theta_n$ contains $y_0$ and conclude that $\Theta_1\cup\Theta_2\cup\dots\cup\Theta_n$ contains $\pi_0$ by convexity and thus also the subpath $\pi$.

\medskip

\noindent {\bf Case 2.} Suppose $\Gamma_2$ intersects both $\Theta_1$ and $\Theta_3$ or $\Gamma_{n-1}$ intersects both $\Theta_{n-2}$ and $\Theta_n$. First, assume $\Gamma_2$ intersects both $\Theta_1$ and $\Theta_3$. The possibilities are as follows:

\begin{itemize} 
\item $\Gamma_{n-1}$ intersects $\Theta_{n-2}$. Then $\Gamma_2,\Theta_3,\Theta_4,\dots,\Theta_{n-2},\Gamma_{n-1}$ is a geodesic sequence and, hence, contains $\pi_0$ by convexity and thus the subpath $\pi$. Since $\pi$ has no edges in $\Gamma_2\cup\Gamma_{n-1}$, we deduce that $\Theta_3\cup\Theta_4\cup\dots\cup\Theta_{n-2}$ contains $\pi$.
\item $\Gamma_{n-1}$ does not intersect $\Theta_{n-2}$. Then $\Theta_{n-2}\cup\Theta_{n-1}\cup\Theta_n$ contains $y_0$ as discussed in case 1. The sequence $\Gamma_2,\Theta_3,\Theta_4,\dots,\Theta_n$ is geodesic, because it is a connected subsequence of $\Theta_1,\Gamma_2,\Theta_3,\Theta_4,\dots,\Theta_n$, and it contains the endpoints of $\pi_0$. Hence, by convexity, it contains $\pi_0$ and its subpath $\pi$. Since $\pi$ has no edges in $\Gamma_2$, we have that that $\Theta_3\cup\Theta_4\cup\dots\cup\Theta_{n}$ contains $\pi$.
\end{itemize}
The case that $\Gamma_{n-1}$ intersects both $\Theta_{n-2}$ and $\Theta_n$ is symmetric.
\end{proof}

\begin{coro}\label{cor:5_intersection} Let $\Gamma_1,\Gamma_2,\dots,\Gamma_5$ and $\Theta_1,\Theta_2,\dots,\Theta_5$ be geodesic sequences that are both parallel to a geodesic sequence $\Xi_1,\Xi_2,\dots,\Xi_5$. Then $(\Gamma_1\cup\Gamma_2\cup\dots\cup\Gamma_5)\cap(\Theta_1\cup\Theta_2\cup\dots\cup\Theta_5)\neq\emptyset$.
\end{coro}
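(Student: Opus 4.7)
My plan is to apply Lemma \ref{lem:fixed_interesection_path} to the geodesic sequence $\Xi_1,\Xi_2,\dots,\Xi_5$, after reducing to the case in which both $\Gamma_\bullet$ and $\Theta_\bullet$ are \emph{properly} parallel to $\Xi_\bullet$. The lemma is tailor-made for the case $n=5$: it produces a geodesic path $\pi$ contained in $\Xi_3$ (since $n-2=3$), intersecting both $\Xi_2$ and $\Xi_4$ in exactly one vertex each, with the property that any geodesic sequence properly parallel to $\Xi_1,\dots,\Xi_5$ has union containing $\pi$. Thus if both $\Gamma_\bullet$ and $\Theta_\bullet$ enjoy this proper-parallelism, the two unions both contain $\pi$ and in particular meet.

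So the only work is to handle the degenerate situations. I would first dispose of the following trivial cases in order. If $\Gamma_i=\Theta_i$ for some $i$, then $(\Gamma_1\cup\dots\cup\Gamma_5)\cap(\Theta_1\cup\dots\cup\Theta_5)\supseteq \Gamma_i\neq\emptyset$. If $\Gamma_i=\Xi_i$ for some $i$, then parallelism of $\Theta_\bullet$ and $\Xi_\bullet$ gives $\emptyset\neq\Theta_i\cap\Xi_i=\Theta_i\cap\Gamma_i$, again yielding intersection. Symmetrically for $\Theta_i=\Xi_i$.

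Hence we may assume $\Gamma_i,\Theta_i,\Xi_i$ are pairwise distinct for every $i\in\{1,\dots,5\}$. Then both $\Gamma_\bullet$ and $\Theta_\bullet$ are properly parallel to $\Xi_\bullet$ in the sense of Definition \ref{def:parallel}, so Lemma \ref{lem:fixed_interesection_path} applied to $\Xi_1,\dots,\Xi_5$ furnishes a path $\pi\subseteq \Xi_3$ contained in both unions, finishing the proof. There is no serious obstacle: the argument is essentially a direct invocation of Lemma \ref{lem:fixed_interesection_path}, with the preliminary case analysis serving only to rule out coincidences among the three sequences.
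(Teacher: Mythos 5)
Your proposal is correct and takes essentially the same route as the paper: reduce to the case where both $\Gamma_\bullet$ and $\Theta_\bullet$ are properly parallel to $\Xi_\bullet$ (the paper likewise dismisses $\Gamma_i=\Xi_i$ or $\Theta_i=\Xi_i$ as obvious), then invoke Lemma~\ref{lem:fixed_interesection_path} to obtain the common path $\pi$. Your extra preliminary case $\Gamma_i=\Theta_i$ is harmless but unnecessary, since it is already subsumed by the proper-parallelism dichotomy on $\Xi_\bullet$.
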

\begin{proof}
 If both are properly parallel to $\Xi_1,\Xi_2,\dots,\Xi_5$, then \autoref{lem:fixed_interesection_path} implies the claim. If $\Gamma_i=\Xi_i$ or $\Theta_i=\Xi_i$ for some $i$, the claim is obvious.
\end{proof}

\begin{lemm}\label{lem:intersection_implies_parallel} If $\Gamma_1,\Gamma_2,\dots,\Gamma_{n+k}$ and $\Theta_1,\Theta_2,\dots,\Theta_n$ are geodesic sequences with $\Gamma_1\cap\Theta_1\neq\emptyset$ and $\Gamma_{n+k}\cap\Theta_n\neq\emptyset$ for $k\geq 0$, then $k\in\{0,1,2\}$. Moreover:
\begin{itemize}
\item[0.] If $k=0$, then $\Gamma_1,\dots,\Gamma_n$ and $\Theta_1,\dots,\Theta_n$ are parallel, or $\Gamma_2,\dots,\Gamma_n$ and $\Theta_1,\dots,\Theta_{n-1}$ are parallel, or $\Gamma_1,\dots,\Gamma_{n-1}$ and $\Theta_2,\dots,\Theta_n$ are parallel. If $k=0$, and $\Theta_1=\Gamma_1$ or $\Theta_n=\Gamma_n$, then $\Gamma_1,\dots,\Gamma_n$ and $\Theta_1,\dots,\Theta_n$ are parallel. 
\item[1.] If $k=1$, then $\Gamma_1,\dots,\Gamma_n$ and $\Theta_1,\dots,\Theta_n$ are parallel, or $\Gamma_2,\dots,\Gamma_{n+1}$ and $\Theta_1,\dots,\Theta_{n}$ are parallel.
 \item[2.] If $k=2$, then $\Gamma_2,\dots,\Gamma_{n+1}$ and $\Theta_1,\dots,\Theta_{n}$ are parallel.
\end{itemize}
\end{lemm}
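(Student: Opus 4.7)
My strategy is to first establish the numerical bound $k\leq 2$ by a triangle-inequality argument in the relator graph, and then to handle each of the three cases $k=0,1,2$ by applying \autoref{lem:chords} to appropriate short cycles of relators. The key observation throughout is that geodesicity of a sequence of relators (\autoref{def:geodesic}) is very restrictive: if $\Gamma_1,\ldots,\Gamma_m$ is geodesic, then $\Gamma_i\cap\Gamma_j=\emptyset$ whenever $|i-j|\geq 2$, and any alternative connecting sequence of relators of the same length must also be geodesic. This will provide the structural rigidity needed to force parallelism.

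For the upper bound, I will concatenate: a one-step hop from $\Gamma_1$ to $\Theta_1$ (using $\Gamma_1\cap\Theta_1\neq\emptyset$), the entire $\Theta$-sequence, and a one-step hop from $\Theta_n$ to $\Gamma_{n+k}$ (using $\Gamma_{n+k}\cap\Theta_n\neq\emptyset$). This produces a sequence of at most $n+2$ relators from $\Gamma_1$ to $\Gamma_{n+k}$, hence at most $n+1$ consecutive intersections. Geodesicity of $\Gamma_1,\ldots,\Gamma_{n+k}$ forces $(n+k)-1\leq n+1$, so $k\leq 2$. (Possible identifications among the $\Gamma$'s and $\Theta$'s only make the conclusion stronger, and in fact drive the bookkeeping that distinguishes the three cases.) For $k=2$, the concatenated sequence $(\Gamma_1,\Theta_1,\ldots,\Theta_n,\Gamma_{n+2})$ attains the geodesic length $n+1$, hence is itself geodesic. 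The conclusion then reduces to showing: two geodesic sequences of the same length with coinciding initial and terminal relators must be parallel. I will prove this by induction on $n$, where at each step I apply \autoref{lem:chords} to the $4$-cycle $\Gamma_1,\Gamma_2,\Theta_2,\Theta_1$ (noting $\Gamma_1=\Theta_1$ in the degenerate endpoint sense); since the diagonal $\Gamma_1\cap\Theta_2$ would contradict geodesicity of the alternative sequence, the lemma forces $\Gamma_2\cap\Theta_1\neq\emptyset$, after which induction can be applied to the truncated sequences.

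For $k=1$ and $k=0$, I will argue similarly. Case $k=1$ splits into two sub-cases according to whether the alternative sequence $(\Gamma_1,\Theta_1,\ldots,\Theta_n,\Gamma_{n+1})$ has $n+2$ distinct relators (then it is geodesic and falls into the $k=2$-style fellow-traveling argument with the $\Gamma$-sequence) or undergoes an identification (which directly produces the claimed parallelism with one shift). Case $k=0$ further subdivides: if $\Gamma_1=\Theta_1$ or $\Gamma_n=\Theta_n$, then the induction from the $k=2$ argument applies directly and yields parallelism without shift; otherwise, \autoref{lem:chords} applied to the $3$-cycle $\Gamma_1,\Gamma_2,\Theta_1$ (or $\Gamma_1,\Theta_2,\Theta_1$) together with an analogous analysis at the other end will determine which of the three shift options is realized, depending on whether $\Gamma_2\cap\Theta_1\neq\emptyset$, $\Theta_2\cap\Gamma_1\neq\emptyset$, or $\Gamma_2\cap\Theta_2\neq\emptyset$ holds. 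The main obstacle I anticipate is the non-degenerate sub-case of $k=0$: ruling out the configuration where $\Gamma_2\cap\Theta_2=\emptyset$ yet neither of the shifted parallelisms holds, which requires combining \autoref{lem:chords} at both ends simultaneously and invoking the geodesic constraint $\Gamma_i\cap\Gamma_j=\emptyset$ for $|i-j|\geq 2$ to eliminate any ``crossing'' pattern of intersections between the two sequences.
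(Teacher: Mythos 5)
The numerical bound $k\leq 2$ is fine: concatenating $\Gamma_1,\Theta_1,\dots,\Theta_n,\Gamma_{n+k}$ gives a connecting sequence of at most $n+2$ relators, so geodesicity of the $\Gamma$-sequence forces $k\leq 2$. This matches what the paper treats as immediate.

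However, the engine you propose for propagating intersections along the two sequences does not work, and this is a genuine gap. You want to apply \autoref{lem:chords} to the ``4-cycle'' $\Gamma_1,\Gamma_2,\Theta_2,\Theta_1$ with $\Gamma_1=\Theta_1$, conclude that one of the two diagonals is nonempty, rule out $\Gamma_1\cap\Theta_2$, and obtain $\Gamma_2\cap\Theta_1\neq\emptyset$. But \autoref{lem:chords} requires that \emph{all} consecutive pairs in the cycle intersect; in your configuration $\Gamma_1\cap\Gamma_2\neq\emptyset$ and $\Theta_1\cap\Theta_2\neq\emptyset$ are known, but $\Gamma_2\cap\Theta_2\neq\emptyset$ is not --- it is, up to relabelling, precisely the conclusion you are trying to reach. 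You are attempting to close a near-cycle that is missing one consecutive intersection; \autoref{lem:chords} only tells you about chords once you already have a cycle. The same circularity infects your treatment of the $k=0$ endpoint-coinciding case, and your plan for the non-degenerate $k=0$ case (``\autoref{lem:chords} at both ends plus geodesic constraints'') is too vague to assess, but it leans on the same misuse.

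The paper resolves this with a different mechanism that you do not invoke. From a point $x\in\Gamma_i\cap\Theta_j$ and $y\in\Gamma_{n+k}\cap\Theta_n$, one takes a geodesic in $X$ (the Cayley graph, not the coned-off graph). By \autoref{lem:convex} this geodesic is contained in both $\Gamma_i\cup\dots\cup\Gamma_{n+k}$ and $\Theta_j\cup\dots\cup\Theta_n$, so its first edge leaving $\Gamma_i\cap\Theta_j$ is forced to lie in $\Gamma_{i+1}$ or $\Theta_{j+1}$, and simultaneously in the other union; this yields $\Gamma_{i+1}\cap\Theta_j\neq\emptyset$ or $\Gamma_i\cap\Theta_{j+1}\neq\emptyset$ (observation~A in the paper's proof). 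This, together with two geodesicity-based exclusion observations (B and C) and an iterated application, is what makes the fellow-traveling induction go through. To repair your proof, you would need to replace your appeals to \autoref{lem:chords} with this convexity-of-unions argument or an equivalent one.
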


\begin{proof} The claim that $k\in\{0,1,2\}$ follows immediately from geodesicity. We observe: 

\begin{itemize}
\item[A.] Let $1\leq i<n+k$ and $1\leq j<n$, and let $x$ be a vertex in $\Gamma_i\cap\Theta_{j}$ and $y$ a vertex in  $\Gamma_{n+k}\cap\Theta_{n}$. Let $\alpha$ be a geodesic from $x$ to $y$ in $X$. If $\alpha$ is contained in $\Gamma_i\cap\Theta_{j}$, then $\Gamma_i\cap\Theta_{j}\cap\Gamma_{n+k}\cap\Theta_{n}\neq\emptyset$, and we have $i+1=n+k$ and $j+1=n$. Otherwise, by convexity, the first edge of $\gamma$ outside $\Gamma_i\cap\Theta_{j}$ must lie in $\Gamma_{i+1}\cup\Theta_{j+1}$. Hence, in both cases we have: $\Gamma_i$ intersects $\Theta_{j+1}$, or $\Theta_{j}$ intersects $\Gamma_{i+1}$.
\item[B.] If $\Gamma_i$ intersects $\Theta_{i+ 2}$ for some $i$, then for every $j\notin \{i,i+1\}$, $\Theta_j$ cannot intersect $\Gamma_{j+1}$, because the sequence is geodesic. The symmetric observation holds when exchanging the variables $\Gamma$ and $\Theta$.
\item[C.] $\Gamma_i$ cannot intersect $\Theta_{i\pm k}$ for $k\geq 3$.
\end{itemize}

\medskip

\noindent{\bf Case 0.} We prove claim 0, i.e.\ the case $k=0$. We break this up into three subcases.

\medskip

\noindent{\bf Case 0.1.} Suppose $\Gamma_i$ intersects $\Theta_{i+2}$ for some $1\leq i\leq n-2	$. We claim: $\Gamma_j$ intersects $\Theta_{j+1}$ for every $i\leq j\leq n-1$. Note that if $i=n-2$, then \autoref{lem:chords} yields the claim. Otherwise, we may apply observation A to $\Gamma_i$ and $\Theta_{i+2}$ to deduce: $\Gamma_i$ intersects $\Theta_{i+3}$, or $\Gamma_{i+1}$ intersects $\Theta_{i+2}$. Now C forbids the former, whence we have that $\Gamma_{i+1}$ intersects $\Theta_{i+2}$.

Now assume we have proven our claim for $j$ with $i+1\leq j\leq n-2$, and consider $\Gamma_j$ and $\Theta_{j+1}$. Observation A shows that $\Gamma_{j}$ intersects $\Theta_{j+2}$, or  $\Gamma_{j+1}$ intersects $\Theta_{j+1}$. In the first case, we are at the beginning of case 0.1 with the index $i$ replaced by $j$ and, hence, are able to show that $\Gamma_{j+1}$ intersects $\Theta_{j+2}$. In the second case, we deduce from A that $\Gamma_{j+2}$ intersects $\Theta_{j+1}$, or that $\Gamma_{j+1}$ intersects $\Theta_{j+2}$. Now, as the former is ruled out by B, the latter holds. Thus, we may use induction to conlude our claim.
Observe that proving that $\Gamma_j$ intersects $\Theta_{j+1}$ for every $1\leq j\leq i$ is symmetric. Thus, we conclude that $\Gamma_j$ intersects $\Theta_{j+1}$ for every $1\leq j\leq n-1$.

\medskip

\noindent {\bf Case 0.2.} Suppose $\Theta_i$ intersects $\Gamma_{i+2}$ for some $i$. This is symmetric to case 0.1, and $\Theta_j$ intersects $\Gamma_{j+1}$ for every $1\leq j\leq n-1$. 

\medskip

\noindent {\bf Case 0.3.} Suppose for no $i$ we have $\Gamma_i$ intersects $\Theta_{i\pm2}$. Then, by iteratively applying A, we deduce that $\Gamma_i$ intersects $\Theta_i$ for every $1\leq i\leq n$. Observe that if $\Gamma_1=\Theta_1$ or if $\Gamma_n=\Theta_n$, then we must be in this case by geodesicity.

\medskip

\noindent {\bf Case 1.} Let $k=1$. Suppose $\Gamma_1$ intersects $\Theta_2$ or $\Theta_{n-1}$ intersects $\Gamma_{n+1}$. In the first case, $\Gamma_1,\Theta_2,\Theta_3,\dots,\Theta_{n},\Gamma_{n+1}$ is a geodesic sequence, and claim 0 shows that for each $2\leq i\leq n$, $\Gamma_i$ and $\Theta_i$ intersect. In the second case, $\Gamma_1,\Theta_1,\Theta_2,\dots,\Theta_{n-1},\Gamma_{n+1}$ is a geodesic sequence and, again, claim 0 shows that for each $2\leq i\leq n$, $\Gamma_i$ and $\Theta_{i-1}$ intersect.
 
Now assume that neither $\Gamma_1$ intersects $\Theta_2$, nor does $\Gamma_{n+1}$ intersect $\Theta_{n-1}$. Then observation A implies that $\Gamma_2$ intersects $\Theta_1$ and $\Gamma_n$ intersects $\Theta_n$. If $\Gamma_i$ intersects $\Theta_i$ for all $2\leq i\leq n-1$, then we are done. If $\Gamma_i$ does not intersect $\Theta_i$ for some $2\leq i\leq n-1$, then, since $\Gamma_{n+1}$ does not intersect $\Theta_{n-1}$, claim 0 shows that $\Gamma_2,\Gamma_3,\dots,\Gamma_{n+1}$ and $\Theta_1,\Theta_2,\dots,\Theta_n$ are parallel.
 
\medskip

\noindent {\bf Case 2.} Let $k=2$. Then $\Gamma_1,\Theta_1,\Theta_2,\dots, \Theta_n,\Gamma_{n+2}$ is a geodesic sequence. We apply claim 0.
\end{proof}

\begin{coro}\label{cor:intersection_implies_parallel} Let $\Gamma_1,\Gamma_2,\dots,\Gamma_n$ and $\Theta_1,\Theta_2,\dots,\Theta_{n'}$ be geodesic sequences such that $\Gamma_1\cap\Theta_1\neq \emptyset$ and $\Gamma_n\cap\Theta_{n'}\neq\emptyset$. Then $\Gamma_2,\Gamma_3,\dots,\Gamma_{n-1}$ is parallel to a connected subsequence of $\Theta_1,\Theta_2,\dots,\Theta_{n'}$, and this subsequence contains $\Theta_3,\dots,\Theta_{n'-2}$.
\end{coro}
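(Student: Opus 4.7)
The plan is to deduce the corollary directly from Lemma~\ref{lem:intersection_implies_parallel} by a short case analysis; no new geometric input is required. First I would observe that both the hypotheses and the desired conclusion are symmetric in the two sequences, so I may assume without loss of generality that $n\leq n'$. Setting $k:=n'-n$, I would then match the present statement to the format of the lemma by playing the $\Theta$-sequence of the corollary (which has length $n+k=n'$) in the role of the lemma's $\Gamma$-sequence, and the $\Gamma$-sequence of the corollary (of length $n$) in the role of the lemma's $\Theta$-sequence. The first assertion of Lemma~\ref{lem:intersection_implies_parallel} then forces $k\in\{0,1,2\}$.

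Next I would split into cases on $k$ and read off the parallel pairings from the lemma. For $k=0$ one of three full-length parallel pairings holds; for $k=1$ one of two length-$n$ pairings inside $\Theta_1,\dots,\Theta_{n+1}$ holds; and for $k=2$ the unique length-$n$ pairing of $\Theta_2,\dots,\Theta_{n+1}$ with $\Gamma_1,\dots,\Gamma_n$ inside $\Theta_1,\dots,\Theta_{n+2}$ holds. In each of these subcases, restricting the pairing to the sub-sequence $\Gamma_2,\dots,\Gamma_{n-1}$ immediately yields a parallel connected subsequence of $\Theta$'s of length $n-2$ whose indices form a connected block inside $\{1,\dots,n'\}$.

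Finally I would verify the containment claim, namely that this parallel $\Theta$-subsequence contains $\Theta_3,\dots,\Theta_{n'-2}$. This amounts to pure index arithmetic: in every one of the six subcases the starting index of the parallel $\Theta$-subsequence differs from $1$ by at most $2$, and its ending index differs from $n'$ by at most $2$, so the middle segment $\Theta_3,\dots,\Theta_{n'-2}$ is always included. The only mild obstacle I anticipate is keeping the bookkeeping straight through the symmetry $n\leftrightarrow n'$ and through the three/two/one subcases produced by the lemma, but each verification is a direct substitution.
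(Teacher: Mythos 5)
Your overall plan is the right one: \autoref{cor:intersection_implies_parallel} is stated without proof in the paper and is meant to follow from \autoref{lem:intersection_implies_parallel} by exactly the kind of case-by-case bookkeeping you describe. Your index arithmetic in the six sub-cases ($k=0,1,2$) is correct when $n\leq n'$.

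However, the opening reduction is wrong: the conclusion of the corollary is \emph{not} symmetric in the two sequences. It asserts that the trimmed segment $\Gamma_2,\dots,\Gamma_{n-1}$ (trimmed by \emph{one} index on each end) is parallel to a connected subsequence of the $\Theta$'s containing $\Theta_3,\dots,\Theta_{n'-2}$ (trimmed by \emph{two} indices on each end). Swapping $\Gamma$ and $\Theta$ gives a different statement, namely that $\Theta_2,\dots,\Theta_{n'-1}$ is parallel to something in $\Gamma$ containing $\Gamma_3,\dots,\Gamma_{n-2}$, and this does not imply the original: when $n>n'$, the swapped pairing only provides intersections for $n'-2<n-2$ of the $\Gamma$'s, so it cannot cover all of $\Gamma_2,\dots,\Gamma_{n-1}$. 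So you cannot ``assume without loss of generality that $n\leq n'$.''

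The fix is routine: handle the case $n>n'$ separately by applying \autoref{lem:intersection_implies_parallel} with the $\Gamma$-sequence in the longer (``$\Gamma$'') slot and the $\Theta$-sequence in the shorter (``$\Theta$'') slot. The lemma again gives $n-n'\in\{0,1,2\}$, and the same direct substitution as in your other cases verifies that $\Gamma_2,\dots,\Gamma_{n-1}$ pairs with a connected $\Theta$-interval whose endpoints lie within distance $2$ of $1$ and $n'$, respectively. With that added case, the proof is complete.
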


\begin{lemm}\label{lem:parallel_skip} Let $\Gamma_1,\Gamma_2,\dots,\Gamma_n$ and $\Theta_1,\Theta_2,\dots,\Theta_{n'}$ be geodesic sequences and $\Xi$ and $\hat\Xi$ be relators such that both $\Gamma_1$ and $\Theta_1$ intersect $\Xi$ and both $\Gamma_n$ and $\Theta_{n'}$ intersect $\hat\Xi$. Then  $\Gamma_9,\Gamma_{10},\dots,\Gamma_{n-8}$ is parallel to a connected subsequence of $\Theta_7,\Theta_8,\dots,\Theta_{n'-6}$. 
\end{lemm}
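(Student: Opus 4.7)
The plan is to bridge the $\Gamma$- and $\Theta$-sequences through an auxiliary geodesic sequence in the relator graph $R$ running from $\Xi$ to $\hat\Xi$, and then to invoke Corollary~\ref{cor:intersection_implies_parallel} twice to establish parallelism.

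First I would fix a geodesic sequence $\Omega_1=\Xi,\Omega_2,\dots,\Omega_p=\hat\Xi$ in $R$. From the hypotheses, $\Omega_1\cap\Gamma_1\neq\emptyset$ and $\Omega_p\cap\Gamma_n\neq\emptyset$, so Corollary~\ref{cor:intersection_implies_parallel} applied to $(\Gamma_1,\dots,\Gamma_n)$ and $(\Omega_1,\dots,\Omega_p)$ produces an index $s_\Gamma\in\{1,2,3\}$ with $\Gamma_i\cap\Omega_{i+s_\Gamma-2}\neq\emptyset$ for every $i\in\{2,\dots,n-1\}$, the associated $\Omega$-subsequence containing $\Omega_3,\dots,\Omega_{p-2}$. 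The analogous application to the $\Theta$-sequence yields $s_\Theta\in\{1,2,3\}$ with $\Theta_j\cap\Omega_{j+s_\Theta-2}\neq\emptyset$ for every $j\in\{2,\dots,n'-1\}$. Setting $\sigma:=s_\Gamma-s_\Theta\in\{-2,\dots,2\}$, one immediately gets that, for every $m\in\{3,\dots,p-2\}$, the relators $\Gamma_{m-s_\Gamma+2}$ and $\Theta_{m-s_\Gamma+2+\sigma}$ share the common neighbor $\Omega_m$ in $R$, so that $d_R(\Gamma_i,\Theta_{i+\sigma})\leq 2$ throughout the overlapping range of $i$.

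The next step, and the heart of the argument, is to upgrade these indirect length-$2$ bridges to direct intersections $\Gamma_i\cap\Theta_{i+\sigma}\neq\emptyset$ for $i$ well inside the overlap. For this I would use the $4$-cycles in $R$ formed by consecutive bridges: for each interior $i$, the quadruple $\Gamma_{i-1},\Omega_{m-1},\Omega_m,\Gamma_i$ (with $m=i+s_\Gamma-2$) forms a $4$-cycle in $R$, so by Lemma~\ref{lem:chords} one obtains additional intersections between $\Gamma$'s and $\Omega$'s; the analogous cycle on the $\Theta$-side provides the symmetric gain. Combining these with the geodesicity of the $\Omega$-sequence to rule out the "wrong" branches of the chord alternatives (as in observations A, B, C in the proof of Lemma~\ref{lem:intersection_implies_parallel}), one argues by induction on $m$ that the required direct intersections $\Gamma_i\cap\Theta_{i+\sigma}\neq\emptyset$ do hold for all interior $i$, which yields parallelism.

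Finally, the cutoffs $9$ and $7$ arise from the bookkeeping of buffers. In the first step, Corollary~\ref{cor:intersection_implies_parallel} consumes up to $3$ indices at each end of the $\Gamma$- and $\Theta$-sequences respectively (once per application), the upgrade step costs an additional buffer of roughly $4$--$6$ indices at each end to keep $m$ inside $\{3,\dots,p-2\}$ and to absorb the shift $\sigma$ of magnitude up to $2$, and the cumulative loss balances to $9$ on the $\Gamma$-side and $7$ on the $\Theta$-side (the $\Theta$-side being smaller because the $\Theta$-parallel subsequence is only required to lie inside an interval rather than to start at a specified index). The main obstacle will be the upgrade step: producing the actual intersections $\Gamma_i\cap\Theta_{i+\sigma}\neq\emptyset$ from the length-$2$ bridges is where the bulk of the combinatorial work lies, and this is what forces the relatively generous buffer constants.
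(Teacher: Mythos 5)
Your opening moves match the paper's: you fix a geodesic sequence $\Omega_1=\Xi,\dots,\Omega_p=\hat\Xi$ and apply \autoref{cor:intersection_implies_parallel} twice to get both $\Gamma_2,\dots,\Gamma_{n-1}$ and $\Theta_2,\dots,\Theta_{n'-1}$ parallel to subsequences of the $\Omega$'s. The divergence -- and the gap -- is at your ``upgrade step.'' You want to promote the length-two bridges $\Gamma_i \sim \Omega_m \sim \Theta_{i+\sigma}$ in $R$ to actual intersections $\Gamma_i\cap\Theta_{i+\sigma}\neq\emptyset$, but the mechanism you propose does not produce them. The $4$-cycles you name, $\Gamma_{i-1},\Omega_{m-1},\Omega_m,\Gamma_i$, live entirely among the $\Gamma$'s and $\Omega$'s; feeding them to \autoref{lem:chords} can only yield $\Gamma$--$\Omega$ intersections, never a $\Gamma$--$\Theta$ intersection. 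To close a cycle through both a $\Gamma_i$ and a $\Theta_j$ with \autoref{lem:chords}, you need a relator already known to meet both, and the only such candidate is $\Omega_m$ itself, which gives a $3$-cycle whose missing edge $\Gamma_i\cap\Theta_j$ is precisely what you are trying to prove. Likewise, observations A, B, C from the proof of \autoref{lem:intersection_implies_parallel} are propagation arguments: they need a seed pair $\Gamma_i\cap\Theta_j\neq\emptyset$ at one end to bootstrap from, and that seed is exactly what you lack. Two relators that are each adjacent in $R$ to a common $\Omega_m$ have no reason to meet each other, so there is no cheap way to do this.

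The missing ingredient is \autoref{cor:5_intersection}, which in turn rests on \autoref{lem:fixed_interesection_path}: any two geodesic sequences parallel to a common length-$5$ geodesic sequence have intersecting supports, because there is a canonical geodesic $\pi$ contained in every properly parallel sequence. The paper uses this on the windows $\Gamma_2,\dots,\Gamma_8$ and $\Theta_2,\dots,\Theta_8$ (both containing sequences parallel to $\Xi_3,\dots,\Xi_7$) to produce one intersection pair $\Gamma_i\cap\Theta_j$ with $i,j\leq 8$ near the $\Xi$-end, and symmetrically near the $\hat\Xi$-end; a final application of \autoref{cor:intersection_implies_parallel} then gives the parallelism, and the cutoffs $9$ and $7$ drop out of the index bookkeeping. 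Note also that the paper never establishes intersections ``for all interior $i$'' directly -- it only needs one near each end, and then \autoref{cor:intersection_implies_parallel} takes care of the interior. You should replace your upgrade step with an invocation of \autoref{cor:5_intersection}.
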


\begin{proof}
 Choose a geodesic sequence $\Xi=\Xi_1,\Xi_2,\dots,\Xi_{n''}=\hat \Xi$. \autoref{cor:intersection_implies_parallel} implies that both $\Gamma_2,\Gamma_3,\dots,\Gamma_{n-1}$ and $\Theta_2,\Theta_3,\dots,\Theta_{n'-1}$ are parallel to connected subsequences of $\Xi_1,\Xi_2,\dots,\Xi_{n''}$ that both contain $\Xi_3,\Xi_4,\dots,\Xi_{n''-2}$. 
 Hence both $\Gamma_2,\Gamma_3,\dots,\Gamma_8$ and $\Theta_2,\Theta_3,\dots,\Theta_8$ contain parallel sequences for $\Xi_3,\Xi_4,\Xi_5,\Xi_6,\Xi_7$. Thus we may invoke  \autoref{cor:5_intersection} to deduce that there exist $1\leq i,j\leq 8$ such that $\Gamma_i$ and $\Theta_j$ intersect.
 Similarly, we obtain $0\leq i',j'\leq 7$ such that $\Gamma_{n-i'}$ and $\Theta_{n'-j'}$ intersect. Now \autoref{cor:intersection_implies_parallel} implies that $\Gamma_9,\Gamma_{10},\dots,\Gamma_{n-8}$ is parallel to a connected subsequence of $\Theta_1,\Theta_2,\dots,\Theta_{n'}$.
 
 Since $\Gamma_9$ cannot intersect $\Theta_k$ for $k\leq 6$ and $\Gamma_{n-8}$ cannot intersect $\Theta_{n'-k}$ for $k\leq 5$, our claim follows.
\end{proof}

\begin{proof}[Proof of \autoref{prop:parallel}] By assumption, there exist $k,k'\leq\epsilon$ and geodesic sequences $\Xi_1,\Xi_2,\dots, \Xi_k$ and $\hat \Xi_1,\hat \Xi_2,\dots \hat \Xi_{k'}$ such that $\Xi_1$ intersects $\Gamma_1$, $\Xi_k$ intersects $\Theta_1$, $\hat \Xi_1$ intersects $\Gamma_n$ and $\hat \Xi_{k'}$ intersects $\Theta_{n'}$. Let $K:=\max\{k,k'\}$. Now for each $1\leq i\leq K$, choose a geodesic sequence $\overline\Xi_i$ from $\Xi_{\min\{i,k\}}$ to $\hat \Xi_{\min\{i,k'\}}$. Denote by $\overline \Xi_{K+1}$ the sequence $\Theta_1,\Theta_2,\dots,\Theta_{n'}$.

We claim: for each $2\leq i\leq K+1$, $\Gamma_{1+8(i-1)},\dots,\Gamma_{n-8(i-1)}$ is parallel to a connected subsequence of $\xi_i$ of $\overline\Xi_i$ that does not contain the endpoints of $\overline\Xi_i$. We argue by induction. For $i=2$, the claim follows from \autoref{lem:parallel_skip}. Now suppose we have shown our claim for some $2\leq k\leq K+1$. Then $\xi_k$ is a parallel to a connected subsequence of $\Xi_{k+1}$ by \autoref{lem:intersection_implies_parallel}. Therefore, \autoref{lem:parallel_skip} implies that $\Gamma_{1+8k},\dots,\Gamma_{n-8k}$ is parallel to a connected subsequence of $\Xi_{k+1}$, and the claim is proved.


Thus $\Gamma_{1+8\lfloor\epsilon\rfloor},\dots,\Gamma_{n-8\lfloor\epsilon\rfloor}$ is parallel to a connected subsequence of $\Theta_{1},\dots,\Theta_{n'}$. Now since $d_{\dot X}(\Gamma_1,\Theta_1)\leq \epsilon$, we have that $\Gamma_{1+8\lfloor\epsilon\rfloor}$ cannot intersect $\Theta_j$ with $j> 9\epsilon+3$, and a similar obervation holds of $\Gamma_{n-8\lfloor\epsilon\rfloor}$ and $j<n'-9\epsilon-2$, whence our result follows.
\end{proof}

\subsubsection{The action on parallels in $\dot X$}

We show that, given two geodesic sequences $\gamma$ and $\theta$ of relators, there are only boundedly many elements in $g\in G(\Gamma)$ such that $\gamma$ and $g\theta$ are parallel.

\begin{prop}\label{prop:action} Let $\Gamma$ be a $C'(1/6,p)$-labelled graph, or a $C_*'(1/6,p)$-labelled graph. Let $\Gamma_1,\Gamma_2,\dots,\Gamma_n$ and $\Theta_1,\Theta_2,\dots,\Theta_n$ be parallel geodesic sequences with $n\geq 21$. Then there exist at most $8p+100$ elements $g\in G(\Gamma)$ such that $\Gamma_1,\Gamma_2,\dots,\Gamma_n$ and $g\Theta_1,g\Theta_2,\dots,g\Theta_n$ are parallel.
\end{prop}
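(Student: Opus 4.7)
The approach is to bound the number of admissible $g$ by counting placements of a single canonical long path. I first apply Lemma~\ref{lem:fixed_interesection_path} to $\Gamma_1,\dots,\Gamma_n$ (legal since $n\geq 21\geq 5$) to obtain the canonical $F$-reduced path $\pi\subseteq\bigcup_{i=3}^{n-2}\Gamma_i$. For every $g$ such that $g\Theta_1,\dots,g\Theta_n$ is properly parallel to $\Gamma_1,\dots,\Gamma_n$, the same lemma forces $\pi\subseteq\bigcup_i g\Theta_i$, so $P_g:=g^{-1}\pi$ is an $F$-reduced path with label $\ell(\pi)$ lying inside the convex set $\bigcup_i\Theta_i$. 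Since $G(\Gamma)$ acts freely on the vertices of $X$, the element $g$ is recovered from the starting vertex of $P_g$. Thus counting $g$'s reduces to counting placements inside $\bigcup_i\Theta_i$ of a label-fixed path whose initial vertex lies in $g^{-1}\Gamma_2\cap g^{-1}\Gamma_3$ and whose terminal vertex lies in $g^{-1}\Gamma_{n-2}\cap g^{-1}\Gamma_{n-1}$, both of which are pinned down to a bounded region by the parallel condition.

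Given two placements $P_{g_1}$ and $P_{g_2}$, set $h:=g_2g_1^{-1}$, so $hP_{g_1}=P_{g_2}$. By Lemma~\ref{lem:intersection_tree}, the intersection $\bigcup_i\Gamma_i\cap\bigcup_i h\Gamma_i$ is an $F$-tree, and the uniqueness statements of Lemmas~\ref{lem:uniqueintersectionpath} and~\ref{lem:fixed_interesection_path} force the middle portions of $\pi$ and $h\pi$ to coincide up to $F$-equivalence inside this tree. Transporting back to $\bigcup_i\Theta_i$, the two placements $P_{g_1}$ and $P_{g_2}$ must overlap in a long common $F$-subpath; the shift between them, i.e.\ the length difference of their initial segments along an ambient $F$-geodesic, then imposes a period on the label $\ell(\pi)$.

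To bound the number of admissible shifts, I invoke the power-boundedness clause of Definition~\ref{def: small cancellation - bunrside variable exp}\ref{enu: small cancellation - bunrside variable exp - powers} (respectively Definition~\ref{def: small cancellation condition -free product - burnside}): if the shift length equals $|w|$ for some cyclically reduced $w$ and the common overlap has length at least $p\cdot|w|$, then $w^p$ labels a path in $\Gamma$, whence $w^n$ labels a path in $\Gamma$ for every $n\in\N$. Combined with Lemma~\ref{lem:convex}, this would force arbitrarily long periodic structure in the middle of the parallel sequence, contradicting the finite-piece bound $|u|<|\partial\Pi|/6$ coming from the $C'(1/6)$ condition. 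Consequently at most $8p$ shifts are possible in the interior (the factor $8$ arising from cyclic-conjugate and two-direction bookkeeping, and from the additional short-shift options allowed when $\bigcup\Theta_i$ contains attached Cayley graphs in the free product case). The additive constant $100$ absorbs the boundary contributions: those $g$'s for which $\Gamma$ and $g\Theta$ are parallel but not properly parallel (so $g\Theta_i=\Gamma_i$ for some $i$) are handled by the triviality of the pointwise relator stabilizer in the free group case and by the cylinder-free hypothesis (Definition~\ref{defi:cylinder-free}) in the free product case, together with the end-buffer arising in Proposition~\ref{prop:parallel}.

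The main obstacle will be the third step: cleanly converting an overlap of two label-identical $F$-reduced paths in $\bigcup_i\Theta_i$ into a genuine $p$-th power $w^p$ labelling a path on the graph $\Gamma$ itself, rather than merely on its Cayley graph $X$. This requires passing from the ambient geometry of $X$ back to $\Gamma$, which is where the cylinder-free property prevents attached Cayley graphs from producing spurious non-rigidity in the free product case, and where the $C'(1/6)$ piece-length bound controls overlaps between distinct relators in $X$. Once a genuine $p$-th power is extracted, the power-free part of Definition~\ref{def: small cancellation - bunrside variable exp} (respectively Definition~\ref{def: small cancellation condition -free product - burnside}) closes the argument.
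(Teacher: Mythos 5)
Your overall strategy -- fix the canonical path from Lemma~\ref{lem:fixed_interesection_path}, observe that every admissible $g$ translates a fixed labelled path into a convex union of relators, and count the admissible translations by bounding the allowed shift using the power-restriction clause -- is the same as the paper's. However, there is a genuine gap precisely at the step you flag as ``the main obstacle,'' and it is not resolved by the ideas you sketch.

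Concretely: when two placements of the canonical path overlap with a shift of length $|w|$, all you obtain is that $w^p$ labels a path in the Cayley graph $X$. The $C'(1/6,p)$-condition says nothing about paths in $X$; its power clause only restricts powers that label paths \emph{on $\Gamma$}. Going from one to the other is the entire content of Lemma~\ref{lem:path_intersection_shift} in the paper, and it rests on an ingredient you never invoke: a geodesic path in $X$ labelled by a word $y$ can be covered by at most $|y|_{\dot X}$ relators (\cite[Proposition~3.6]{Gruber:2014wo}). That covering bound, combined with the fact that the path being shifted has $\dot X$-diameter $>1$ (so not all powers of $w$ can label paths on $\Gamma$, whence at most the $(p-1)$-st power does), is what converts ``$w^p$ appears along a path in $X$'' into the inequality $\epsilon \geq |r^k|_{\dot X} > k/p$, and hence $k < \epsilon p$. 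Your replacement -- appealing to cylinder-freeness and the $1/6$ piece bound -- does not bridge this gap: cylinder-freeness only rules out automorphisms fixing two disjoint attached Cayley graphs, and the piece bound controls interfaces between distinct relators, not the relationship between paths in $X$ and paths on $\Gamma$. Your final logical step is also backwards: you try to derive a contradiction from ``$w^p$ labels a path on $\Gamma$ implies $w^n$ does for all $n$,'' but the argument should go by contrapositive (since not all powers occur on $\Gamma$, neither does $w^p$), and this requires knowing the path under discussion has $\dot X$-length $>1$, which you never establish for your overlap.

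The bookkeeping explanation is also off. In the paper, $8p$ comes from applying Lemma~\ref{lem:path_intersection_shift} with $\epsilon = 4$ once at each end of the sequence (so $4p + 4p$), and $100 = 10^2$ comes from the case where both the initial and terminal length-$10$ subsegments of the two parallel sequences are not properly parallel: then $\Gamma_i = g\Theta_i$ for some $1 \leq i \leq 10$ and $\Gamma_j = g\Theta_j$ for some $n-9\leq j\leq n$, and Lemmas~\ref{lem:stabilizing_segments} and \ref{lem:tripleautomorphism} pin down at most one $g$ for each of the $10\times 10$ pairs $(i,j)$. Your account (``cyclic-conjugate and two-direction bookkeeping,'' and the $100$ absorbing ``end-buffer'' contributions) does not correspond to this mechanism, and as written it would not obviously yield the stated numerical constants. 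Finally, a minor point: with $P_g := g^{-1}\pi$, the element taking $P_{g_1}$ to $P_{g_2}$ is $g_2^{-1}g_1$, not $g_2 g_1^{-1}$, and in the next sentence there is no reason to intersect $\bigcup_i\Gamma_i$ with $\bigcup_i h\Gamma_i$ rather than with $\bigcup_i\Theta_i$ or an appropriate translate; that sentence needs rewriting to be meaningful.
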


We first explain how this yields \autoref{thm:acylindricity}.

\begin{proof}[Proof of \autoref{thm:acylindricity} using \autoref{prop:action}] Let $\epsilon \geq0$, and let $x\in G$ with $d_{\dot X}(1,x)=n\geq 18\epsilon+25$.  Let $g\in G(\Gamma)$ with $d(1,g)\leq\epsilon$ and $d(x,gx)\leq\epsilon$. Let $\Gamma_1,\dots,\Gamma_n$ be a geodesic sequence of relators from $1$ to $x$. Then $d_{\dot X}(\Gamma_1,g\Gamma_1)\leq\epsilon$ and $d_{\dot X}(\Gamma_n,g\Gamma_n)\leq\epsilon$. Hence, there exist $0\leq k,k',l,l'\leq 9\epsilon+3$ for which the statement of \autoref{prop:parallel} holds. Observe that since $k+l=k'+l'$, the choice of $k,l,k'$ determines $l'$. We obtain parallel sequences of length at least $n-k-l+2\geq n-2(9\epsilon+3)+2\geq 21$. Thus, we are in the case of \autoref{prop:action} and, given $l,l',k,k'$, there are at most $8p+100$ possibilities for $g$. Therefore, there are at most $(9\epsilon+4)^3(8p+100)$ possibilities for $g$ in total.
\end{proof}

We will now prove Proposition~\ref{prop:action}. In the following, we still assume that $\Gamma$ satisfies the $\scc$-condition or the $\scsc$-condition.

\begin{lemm}\label{lem:stabilizing_segments} Let $\Gamma_1,...,\Gamma_n$ be a geodesic sequence, and let $g$ with $g\Gamma_1=\Gamma_1$ and $g\Gamma_n=\Gamma_n$. Then we have $g\Gamma_i=\Gamma_i$ for all $1\leq i\leq n$.
\end{lemm}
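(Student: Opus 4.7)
The plan is to first reduce the statement to a parallelism result, then carry out an induction on $n$ whose core is a small cancellation argument showing $g\Gamma_2 = \Gamma_2$.

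As $g$ acts by isometries on $\dot X$, the sequence $g\Gamma_1,\dots,g\Gamma_n$ is also geodesic. The hypotheses $g\Gamma_1 = \Gamma_1$ and $g\Gamma_n=\Gamma_n$ ensure $\Gamma_1\cap g\Gamma_1\neq\emptyset$ and $\Gamma_n\cap g\Gamma_n\neq\emptyset$, so we are in the setting of \autoref{lem:intersection_implies_parallel} with $k=0$, and the additional coincidence $\Theta_1=\Gamma_1$. The corresponding conclusion in case~0 gives that the two sequences are parallel, i.e.\ $\Gamma_i\cap g\Gamma_i\neq\emptyset$ for every $1\leq i\leq n$.

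I would then proceed by strong induction on $n$. The cases $n\leq 2$ are immediate. For $n\geq 3$, my goal is to establish $g\Gamma_2=\Gamma_2$; by the symmetric argument this also yields $g\Gamma_{n-1}=\Gamma_{n-1}$, after which the inductive hypothesis applied to the geodesic subsequence $\Gamma_2,\dots,\Gamma_{n-1}$ (with the isometry $g$, which now stabilises both its endpoints) gives $g\Gamma_i=\Gamma_i$ for all $2\leq i\leq n-1$.

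For the key claim $g\Gamma_2=\Gamma_2$, I would argue by contradiction and let $j$ be the maximal index with $g\Gamma_j\neq\Gamma_j$, so $g\Gamma_{j+1}=\Gamma_{j+1}$ and $2\leq j\leq n-1$. Applying \autoref{lem:chords} with $k=3$ to the triples $(\Gamma_1,\Gamma_2,g\Gamma_2)$ and $(\Gamma_{j+1},\Gamma_j,g\Gamma_j)$, whose pairwise intersections are non-empty thanks to the geodesicity of both sequences, the equalities $g\Gamma_1=\Gamma_1$ and $g\Gamma_{j+1}=\Gamma_{j+1}$, and the parallelism from the first paragraph, I obtain common intersection points $v\in\Gamma_1\cap\Gamma_2\cap g\Gamma_2$ and $w\in\Gamma_{j+1}\cap\Gamma_j\cap g\Gamma_j$. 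By \autoref{lem:convex}, a geodesic $\sigma$ from $v$ to $w$ in $X$ is contained in both $\Gamma_2\cup\dots\cup\Gamma_j$ and $g\Gamma_2\cup\dots\cup g\Gamma_j$, hence in their intersection, which by \autoref{lem:intersection_tree} is an $F$-tree. In the base case $j=2$, this forces $\sigma$ to lie inside the single piece $\Gamma_2\cap g\Gamma_2$, whose length is less than $|\partial\Gamma_2|/6$ by the $C'(1/6)$-condition; but since $\Gamma_1\cap\Gamma_3=\emptyset$ (geodesicity in $\dot X$ gives $d_{\dot X}(\Gamma_1,\Gamma_3)=2$), the pieces $\Gamma_1\cap\Gamma_2$ and $\Gamma_3\cap\Gamma_2$ are disjoint in $\Gamma_2$ and a path in $\Gamma_2$ joining them has length at least $|\partial\Gamma_2|/3$, contradicting the length of $\sigma$.

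The main obstacle will be the case $j>2$, in which $\sigma$ runs through a non-trivial $F$-tree inside the intersection of two parallel strips, rather than through a single piece. My plan here is to combine the uniqueness of intersection paths from \autoref{lem:uniqueintersectionpath} and \autoref{lem:2pieceunique} with the fact that, along $\sigma$, each maximal subpath contained in a single $\Gamma_k$ is again a piece of $\Gamma_k$; iterating the length comparison of the base case along the $F$-tree, and invoking the cylinder-free condition (\autoref{defi:cylinder-free}) in the free product setting to control the possible label-preserving automorphisms of the relators involved, should again produce a contradiction with the disjointness of the relevant pieces in $\Gamma_2,\dots,\Gamma_j$.
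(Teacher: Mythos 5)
The main mechanism of the paper's proof is missing from your attempt. After reducing (by an inductive/maximality argument such as yours) to the case $g\Gamma_i\neq\Gamma_i$ for all $2\le i\le n-1$, the paper does \emph{not} try to show that the chord $\sigma$ in the intersection is long; instead it invokes the \emph{uniqueness} statement of \autoref{lem:uniqueintersectionpath}. A geodesic $\alpha$ in $X$ from $\Gamma_1$ to $\Gamma_n$ lies, by \autoref{lem:convex}, in both $\Gamma_1\cup\dots\cup\Gamma_n$ and $g\Gamma_1\cup\dots\cup g\Gamma_n$, and the truncated path $\alpha'$ (between the last vertex in $\Gamma_1$ and the first in $\Gamma_n$) is the path described in \autoref{lem:uniqueintersectionpath}. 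So is $g\alpha'$, using $g\Gamma_1=\Gamma_1$, $g\Gamma_n=\Gamma_n$. Uniqueness up to $F$-equivalence forces $g\alpha'$ and $\alpha'$ to have the same initial vertex, and since $G(\Gamma)$ acts freely on vertices of $X$, this gives $g=1$, contradicting $g\Gamma_2\neq\Gamma_2$.

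Your base-case ($j=2$) argument has a concrete error: the claim that a path in $\Gamma_2$ joining $\Gamma_1\cap\Gamma_2$ to $\Gamma_3\cap\Gamma_2$ has length at least $|\partial\Gamma_2|/3$ is false. Even in the classical case, nothing in the $C'(1/6)$-condition or in geodesicity of the sequence prevents the two pieces $\Gamma_1\cap\Gamma_2$ and $\Gamma_3\cap\Gamma_2$ from being disjoint but very close on the relator cycle, so the chord $\sigma$ can be arbitrarily short; and in the graphical case $|\partial\Gamma_2|$ is not even a meaningful quantity, since a relator is an embedded image of an arbitrary component of $\Gamma$, not a cycle. The contradiction genuinely cannot come from a length estimate on $\sigma$; it has to come from the $F$-tree uniqueness. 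Your plan for $j>2$ is only a sketch, and the appeal to cylinder-freeness (\autoref{defi:cylinder-free}) is out of place here: that hypothesis is used only in \autoref{lem:tripleautomorphism}, not in this lemma, which holds for any $\scc$- or $\scsc$-labelled graph. So the proposal, while pointing at the relevant lemmas, does not contain a correct proof of the key step.
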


\begin{proof} We proceed by contradiction and, hence, may assume $n\geq 3$ and $g\Gamma_i\neq \Gamma_i$ for all $2\leq i\leq n-1$. Note that being geodesic implies $g\Gamma_j\neq \Gamma_i$ for every $i\neq j$. Let $\alpha$ be a geodesic 
in $X$, starting in $\Gamma_1$ and ending in $\Gamma_n$. 
Since $\alpha$ starts in $\Gamma_1=g\Gamma_1$ and ends in $\Gamma_n=g\Gamma_n$, both 
$\Gamma_1\cup\Gamma_2\cup\dots\cup\Gamma_n$ and $g\Gamma_1\cup g\Gamma_2\cup\dots\cup g\Gamma_n$ contain $\alpha$ by convexity. Let $\alpha'$ be the subpath of $\alpha$ from the last vertex in $\Gamma_1$ to the first vertex in $\Gamma_n$. Then $\alpha'$ is a path from $\Gamma_1\cap g\Gamma_1$ to $\Gamma_n\cap g\Gamma_n$ in $(\Gamma_1\cup \dots\cup\Gamma_n)\cap(g\Gamma_1\cup ...\cup g\Gamma_n)$ as in the statement of Lemma~\ref{lem:uniqueintersectionpath}.

Consider the path $g\alpha'$. The same arguments as above, together with the facts $\Gamma_1=g\Gamma_1$ and $\Gamma_n=g\Gamma_n$, show that $g\alpha'$ is a path from $\Gamma_1\cap g\Gamma_1$ to $\Gamma_n\cap g\Gamma_n$ in $(\Gamma_1\cup \dots\cup\Gamma_n)\cap(g\Gamma_1\cup \dots\cup g\Gamma_n)$ as in the statement of Lemma~\ref{lem:uniqueintersectionpath}. Hence, by the uniqueness statement of Lemma~\ref{lem:uniqueintersectionpath}, we have $g\alpha'=\alpha'$ by Lemma~\ref{lem:uniqueintersectionpath}. Therefore, $g=1$. This is a contradiction to $g\Gamma_2\neq \Gamma_2$.
\end{proof}

The following lemma is a variation on \cite[Lemma~4.11]{Gruber:2014wo}.
\begin{lemm}\label{lem:tripleautomorphism} Let $\Gamma$ be a $\scc$-labelled graph or a cylinder-free $\scsc$-labelled graph. Let $\Gamma_0,\Gamma_1,\Gamma_2$ be a geodesic sequence of relators. If $g\in G(\Gamma)$ satisfies $g\Gamma_i=\Gamma_i$ for all $i$, then $g=1$.
\end{lemm}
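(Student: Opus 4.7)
The plan is to show that the label-preserving automorphism $\phi$ induced by $g$ on $\Gamma_1$ is the identity. Since $G(\Gamma)$ acts freely on the vertex set of $X=\Cay(G(\Gamma),S)$, this will force $g=1$. Because $\Gamma_0,\Gamma_1,\Gamma_2$ is a geodesic sequence, the definition of geodesicity forces $\Gamma_0\cap\Gamma_2=\emptyset$. Therefore $p:=\Gamma_0\cap\Gamma_1$ and $q:=\Gamma_1\cap\Gamma_2$ are non-empty, disjoint, connected subgraphs of $\Gamma_1$, each preserved setwise by $\phi$.

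In the classical case, $\Gamma_1$ is a simple cycle, so $\phi$ is either a rotation or a reflection. Any non-trivial rotation of the cycle displaces every proper connected subpath to a strict translate that cannot coincide with the original setwise, which rules out rotations. Thus $\phi$ is a reflection whose axis, being forced by the preservation of $p$ and $q$, must pass through the midpoints of both. If this axis meets a vertex of $\Gamma_1$, that vertex is fixed by $g$, contradicting freeness. Otherwise the axis passes through two edge midpoints and $\phi$ reverses the oriented path $p$. Letting $w\in\F(S)$ denote the label of $p$, the label-preservation of $g$ combined with the fact that the reverse of $p$ is labelled by $w^{-1}$ forces $w=w^{-1}$, hence $w^2=1$. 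Since $\F(S)$ is torsion-free, $w=1$, so $p$ reduces to a single vertex, which is then fixed by $g$ -- a contradiction.

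In the graphical cylinder-free case, I would argue that the completion structure $\Gamma=\overline\Gamma$ together with the $\scsc$-condition forces $p$ and $q$ to be attached Cayley graphs of $\Gamma_1$: any intersection of two distinct relators in $X$ is either a short piece or an attached Cayley graph, and in our geodesic setting the relevant intersections are forced to be of the latter type. As $p$ and $q$ are then two disjoint attached Cayley graphs preserved by $\phi$, the cylinder-free hypothesis (\autoref{defi:cylinder-free}) yields $\phi=\mathrm{id}$ immediately, and freeness gives $g=1$.

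The main obstacle is the reflection subcase of the classical argument: one must carefully track label orientations under $\phi$ to produce an involution in $\F(S)$, which is then eliminated by the torsion-freeness of the free group. In the graphical case, the delicate point is identifying $p$ and $q$ as attached Cayley graphs, after which the cylinder-free hypothesis is purpose-built to finish the argument.
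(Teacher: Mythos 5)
There are two genuine gaps, one in each case.

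First, you have misread the scope of the first case. In this paper $\scc$ denotes the \emph{graphical} $C'(1/6)$-condition (\autoref{def: graphical small cancellation}), not the classical one, so $\Gamma_1$ is not a circle graph in general; it may have vertices of high valence and a complicated automorphism group. Your rotation/reflection dichotomy, and the subsequent analysis of the fixed axis, are entirely particular to cycle graphs and do not extend. The paper's argument instead fixes a geodesic $\gamma_1$ in $\Gamma_1$ joining $\Gamma_0$ to $\Gamma_2$ and studies the \emph{displacement paths} $\pi$ (in $\Gamma_0\cap\Gamma_1$, from $\iota\gamma_1$ to $\phi_1(\iota\gamma_1)$) and $\rho$ (in $\Gamma_1\cap\Gamma_2$, from $\tau\gamma_1$ to $\phi_1(\tau\gamma_1)$). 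These are pieces, and the key observation is that the iterates $\pi\,\phi_1(\pi)\cdots\phi_1^k(\pi)$ and $\rho\,\phi_1(\rho)\cdots\phi_1^k(\rho)$ are still pieces; since $\phi_1$ has infinite order (no subpath of these iterates can close up), their geodesic representatives have unbounded length, so the simple closed path $\gamma_1\hat\rho_k\phi_1^k(\gamma_1^{-1})\hat\pi_k^{-1}$ eventually violates $C'(1/6)$. This works for arbitrary graphs, not just cycles.

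Second, in the free-product case your plan hinges on the claim that $p=\Gamma_0\cap\Gamma_1$ and $q=\Gamma_1\cap\Gamma_2$ ``are forced to be attached Cayley graphs.'' This is not true in general: under the $\scsc$-condition an intersection of two relators is a piece, but a piece can straddle several attached Cayley graphs joined by locally geodesic segments that are not contained in any single attached factor. What the paper does is finer: it looks at the specific displacement paths $\pi$ and $\rho$ described above. If either $\pi$ or $\rho$ fails to lie inside a single attached Cayley graph, then the iterated-piece argument from the free case applies verbatim. Only when both $\pi$ and $\rho$ lie in single attached Cayley graphs $C_1$ and $C_2$ (which are then $\phi_1$-invariant and, because $\Gamma_0\cap\Gamma_2=\emptyset$, disjoint) does cylinder-freeness enter, and it then finishes the proof immediately. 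Your proposal skips the iterated-piece mechanism entirely, which is the engine of the proof in both cases, and consequently cannot handle the bulk of the free-product situation either.
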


\begin{proof} Let $\gamma_1$ be a path in $\Gamma_1$ intersecting both $\Gamma_0$ and $\Gamma_2$ such that $|\gamma_1|=d_X(\Gamma_0,\Gamma_2)$. 
Let $g\in G(\Gamma)$ with $g\Gamma_i=\Gamma_i$ for all $i\in\{0,1,2\}$. Then left-multiplication by $g$ gives rise to label-preserving automorphisms $\phi_i$ of the $\Gamma_i$ with $\phi_0(\iota\gamma_1)=\phi_1(\iota\gamma_1)$ and $\phi_2(\tau\gamma_1)=\phi_1(\tau\gamma_1)$. We show that $\phi_1$ is the identity, such that $g$ must be the identity as well.

Suppose $\phi_1$ is not the identity. Then there exist locally geodesic paths $\pi$ from $\iota \gamma_1$ to $\phi_1(\iota\gamma_1)$ in $\Gamma_0\cap\Gamma_1$ and $\rho$ from $\tau\gamma_1$ to $\phi_1(\tau\gamma_1)$ in $\Gamma_1\cap\Gamma_2$. Note that both $\pi$ and $\rho$ are pieces and, more strongly, for any $k$, the paths $\pi_k:=\pi\phi_1(\pi)\phi_1^2(\pi)\dots\phi_1^k(\pi)$ and $\rho_k:=\rho\phi_1(\rho)\phi_1^2(\rho)\dots\phi_1^k(\rho)$ are pieces. Moreover, so are locally geodesic paths with the same endpoints and labelled by the same elements of $F$ as $\pi_k$, respectively $\rho_k$; we denote these by $\hat \pi_k$ and $\hat \rho_k$.

We first consider the free case: if $\phi_1$ is not the identity, then $\ell(\rho_k)=\ell(\rho)^k$ in $F$, whence $\ell(\pi)^k$ is freely non-trivial. Also observe that no subpath of $\hat \pi_k$ or of $\hat \rho_k$ can be closed, as otherwise we would have a simple closed path that is a piece. In particular, $\phi_1$ must have infinite order. By construction, for any $k$, the path $\theta_k:=\gamma_1\hat\rho_k\phi_1^k(\gamma_1^{-1})\hat\pi_k^{-1}$ is a simple closed path. Since $|\hat\pi_k|\to\infty$ and $|\hat\rho_k|\to\infty$ as $k\to\infty$ and $\hat\pi_k$ and $\hat\rho_k$ are pieces, this path eventually violates the $C'(1/6)$-condition.

In the free product case, if $\pi$ or $\rho$ are not contained in a single attached Cayley graph, then, again $|\hat\pi_k|\to\infty$ or $|\hat\rho_k|\to\infty$ as $k\to\infty$, and, again, the $\scsc$-condition is violated eventually. Hence, we may assume that $\pi$ is contained in an attached component $C_1$ and $\rho$ is contained in an attached component $C_2$. Observe that, since $\phi(\iota \pi)=\tau \pi$, we have $\phi(C_1)\cap C_1\neq\emptyset$, whence $\phi$ leaves $C_1$ invariant. The same observation holds for $C_2$. Since $\Gamma_0$ and $\Gamma_2$ are disjoint, so are $C_1$ and $C_2$. Thus, cylinder-freeness implies the claim.
\end{proof}

\begin{lemm}\label{lem:path_intersection_shift}
Assume that $\Gamma$ satisfies the $C'(1/6,p)$-condition or the $C_*'(1/6,p)$ condition, and let $\epsilon> 0$. Let $\gamma$ and $\theta$ be geodesics in $X$, with $d_{\dot X}(\iota\gamma,\tau\gamma)>1$ and $d_{\dot X}(\iota\theta,\tau\theta)>1$. Then there exist at most $\lceil\epsilon p\rceil$ elements $g\in G(\Gamma)$ such that $g\theta$ is an $F$-subpath of $\gamma$ with $d_{\dot X}(\iota\gamma,g\iota\theta)\leq \epsilon$. 
\end{lemm}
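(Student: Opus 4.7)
The plan is to argue by contradiction: assume there exist more than $\lceil \epsilon p\rceil$ elements $g \in G(\Gamma)$ satisfying the conclusion, and derive a contradiction by producing a short cyclically reduced word $\rho$ such that $\rho^p$ labels a path in $\Gamma$ contained in a single relator, and then using the periodic small cancellation condition to force $\theta$ itself to lie in a single relator of $X$, contradicting $d_{\dot X}(\iota\theta,\tau\theta)>1$.

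Label the elements $g_1,\ldots,g_k$ (with $k > \lceil \epsilon p\rceil$) so that the starting vertices $x_i := g_i\iota\theta$ appear in this order along $\gamma$, and let $w$ denote the label of $\theta$. Fix a geodesic sequence of relators $\Gamma_1,\Gamma_2,\ldots$ for $\gamma$ with $\iota\gamma\in\Gamma_1$. The $\dot X$-distance bound $d_{\dot X}(\iota\gamma,x_i)\leq \epsilon$, combined with the geodesicity of the relator sequence, forces each $x_i$ to lie in one of the first few relators of that sequence; a careful accounting gives at most $\lfloor\epsilon\rfloor$ eligible starting relators, which is where the factor $\epsilon$ in the bound originates. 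Applying the pigeonhole principle to our $k > \lceil \epsilon p\rceil$ elements distributed among at most $\lfloor\epsilon\rfloor$ relators, some relator $\Gamma_0$ of the sequence contains at least $p+1$ of the $x_i$; after reindexing, we may assume $x_1,\ldots,x_{p+1}\in\Gamma_0$.

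By convexity of $\Gamma_0$ in $X$ (Lemma~\ref{lem:convex}), the subpath $\gamma_0$ of $\gamma$ from $x_1$ to $x_{p+1}$ is contained in $\Gamma_0$, and thus its label $u$ labels a path in the component of $\Gamma$ corresponding to $\Gamma_0$. The consecutive shifts $\sigma_i$ from $x_i$ to $x_{i+1}$ are each bounded by the length of the portion of $\theta$ inside its starting relator, hence by $|w|$, so the translates $g_i\theta$ and $g_{i+1}\theta$ overlap along $\gamma$, and the $p$ overlapping occurrences of $w$ force $w$ to admit each $\sigma_i$ as a period. A Fine--Wilf style argument then produces a cyclically reduced word $\rho$ (cyclically reduced because it is a subword of the locally geodesic word labelling $\gamma$) of short length $\sigma$ dividing each $\sigma_i$, such that $u$ is of the form $\rho^N$ with $N\geq p$; in particular $\rho^p$ labels a path in $\Gamma$. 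By item~\ref{enu: small cancellation - bunrside variable exp - powers} of Definition~\ref{def: small cancellation - bunrside variable exp} in the free group case, or by the analogous condition of Definition~\ref{def: small cancellation condition -free product - burnside} in the free product case, $\rho^n$ labels a path in $\Gamma$ for every $n\in\N$. Since $w$ has period $\sigma$, it is a prefix of some $\rho^M$, so $w$ itself labels a path in $\Gamma$; uniqueness of paths in the Cayley graph $X$ with a given starting vertex and label then forces $\theta$ to lie in a single relator, the desired contradiction.

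The main technical obstacles are (i) to make the pigeonhole step precise by carefully bookkeeping how $d_{\dot X}(\iota\gamma,x)$ relates to the position of $x$ along the chosen relator sequence of $\gamma$ so that the bound $\lceil \epsilon p\rceil$ is matched exactly (in particular handling boundary effects when $\iota\gamma$ lies in several relators simultaneously), and (ii) to set up the period argument correctly in the free product case, where labels are elements of $F = *_{i\in I}G_i$ and the notions of shift and period must be expressed via locally geodesic representatives and $F$-equivalence classes rather than word identities.
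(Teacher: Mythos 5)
Your proposal takes a genuinely different route from the paper's. The paper argues directly: it first extracts the primitive period $r$ of $\ell(\theta)$ (the shortest word such that $\ell(\theta)=r^Nw'$ with $w'$ a proper prefix of $r$), observes that the shift along $\gamma$ from $\iota\theta$ to $g\iota\theta$ must be a power $r^k$ by primitivity of $r$, and then bounds $k<\epsilon p$ by combining the covering statement of \cite[Proposition~3.6]{Gruber:2014wo} with the observation that, by the $p$-condition, no relator can carry more than $p$ periods of $r$ (else $\theta$ would lie in a single relator). Your proposal is essentially the contrapositive via pigeonhole: more than $\lceil\epsilon p\rceil$ translates forces $p+1$ starting points into one relator of a cover of the initial segment of $\gamma$, and then convexity of that relator plus $d_{\dot X}(\iota\theta,\tau\theta)>1$ produces a high power of a short period labelling a path in $\Gamma$, contradicting the $p$-condition. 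Both approaches hinge on the same two ingredients. What the paper's direct form buys is that the Fine--Wilf-type reasoning is done once and for all for a single fixed $r$; your approach has to extract a common short period $\rho$ from the collection of shifts $\sigma_1,\dots,\sigma_p$, which is additional work.

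Two points deserve care. First, the bound $\sigma_i<|w|$ holds, but not quite for the reason you state: it follows from the convexity of a relator (\autoref{lem:convex}, or \cite[Lemma~2.15]{Gruber:2014wo}) combined with the hypothesis $d_{\dot X}(\iota\theta,\tau\theta)>1$: if $\sigma_i\geq|w|$, then $g_i\theta\subseteq\gamma[x_i,x_{i+1}]\subseteq\Gamma_0$, so $\theta$ lies in one relator. The phrase ``bounded by the length of the portion of $\theta$ inside its starting relator'' is a consequence of this, not the cause. Second, the Fine--Wilf step genuinely needs attention in the regime where the shifts are close to $|w|$ (the Fine--Wilf hypothesis on lengths is not automatic). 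This matches a similar terseness in the paper's own argument, so your plan is sound, but the bookkeeping you flag under ``main technical obstacles'' is where the work lives; neither item is a fatal gap.
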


\begin{proof}[Proof in the free group case]
Let $r$ be a shortest word in the generators such that there exist $w$ a proper initial subword of $r$ and $N$ a positive integer with: $r^Nw$ is a reduced word and $r^Nw=\ell(\theta)$. Being shortest, $r$ is not a proper power of a word. We thus have: if $x$ is a word such that $xr$ is an initial subword of $r^Nw$, then there exists some $k$ with $x=r^k$. 

For simplicity of notation, assume that $\theta$ is a subpath of $\gamma$ as required in the assumption, i.e. $g=1$ satisfies the claim, and, moreover, $d_X(\iota\gamma,\iota\theta)$ is minimal among all possible $G(\Gamma)$-translates of $\theta$ which are subpaths of $\gamma$. (This is no restriction, up to replacing $\theta$ by a translate.) Let $g\in G(\Gamma)$ be non-trivial satisfying the assumptions, and let $x$ be the label of the subpath of $\gamma$ from $\iota\theta$ to $g\iota\theta$.  Then $xr$ is an initial subword of $r^Nw$. Thus $x=r^k$ for some $k$ by the above observation. It remains to restrict $k$.

By \cite[Proposition~3.6]{Gruber:2014wo}, a path in $X$ labelled by geodesic word $y$ can be covered by at most $|y|_{\dot X}$ relators. Since $d_{\dot X}(1,r^Nw)>1$, we have that not all powers of $r$ appear on $\Gamma$. Hence, by the $p$-condition, at most the $p-1$-st power occurs, and the same holds true for any cyclic conjugate of $r$. Hence, $\epsilon\geq|r^k|_{\dot X}> k/p$, whence $1\leq k<\epsilon p$. Thus, including the case $g=1$, we get at most $\lceil\epsilon p\rceil$ elements.
\end{proof}

We now give (local) terminology for the free product case: if $w$ is a word in the generators, then a subword $u$ is a \emph{syllable} if it is a maximal subword whose letters come from a single generating factor. A word $w$ is \emph{reduced} if all its syllables represent non-trivial elements of their respective generating factors. A concatenation of non-empty reduced words $w_1w_2\dots w_k$ is \emph{strongly reduced} if the terminal syllable of $w_i$ is in a different generating factor than the initial syllable of $w_{i+1}$. An \emph{initial $F$-subword} $u$ of $w$ is defined as follows: if $u$ is a word with $k$ syllables, then the first $k-1$ coincide with the first $k-1$ syllables of $w$, and the $k$-th syllable lives in the same generating factor as the $k$-th syllable of $w$; $u$ is a \emph{proper} initial $F$-subword if it has fewer syllables than $w$.

\begin{proof}[Proof in the free product case]
Let $r$ be a shortest word in the generators such that there exist $w$ a proper initial $F$-subword of $r$ and $N$ a positive integer with: $r^Nw$ is strongly reduced and $r^Nw=\ell(\theta)$ in $F$. Observe that, since $d_{\dot X}(\iota \theta,\tau\theta)>1$, $\ell(\theta)$ is not contained in a single generating factor and, therefore, neither is $r$. Hence, being shortest, $r$ does not represent a proper power of an element of $F$. We thus have: if $x$ is a word such that $xr$ is strongly reduced and equal in $F$ to an initial subword of $r^Nw$, then there exists some $k$ with $x=r^k$ in $F$. 

We repeat the proof as before: for simplicity of notation, assume that $\theta$ is an $F$-subpath of $\gamma$ as required in the assumption, i.e. $g=1$ satisfies the claim, and, moreover, $d_X(\iota\gamma,\iota\theta)$ is minimal among all possible $G(\Gamma)$-translates of $\theta$ which are $F$-subpaths of $\gamma$. (This is no restriction, up to replacing $\theta$ by a translate.) Let $g\in G(\Gamma)$ be non-trivial satisfying the assumptions, and let $x$ be the label of the subpath of $\gamma$ from $\iota\theta$ to $g\iota\theta$.  Then $xr$ is strongly reduced by definition of an $F$-subpath, and it is equal in $F$ to an initial subword of $r^Nw$. Thus $x=r^k$ in $F$ for some $k$ by the above observation. It remains to restrict $k$.

By \cite[Proposition~3.6]{Gruber:2014wo}, a path in $X$ labelled by geodesic word $y$ can be covered by at most $|y|_{\dot X}$ relators. Also observe: given two $F$-equivalent paths, both are covered by the same collection of relators. We conclude as in the free group case.
\end{proof}

\begin{proof}[Proof of Proposition~\ref{prop:action}] Let $N=10$. First, let $g\in G(\Gamma)$ such that $\Gamma_1,\Gamma_2,\dots,\Gamma_N$ and $g\Theta_1,g\Theta_2,\dots,g\Theta_N$ are properly parallel. Let $\gamma$ be the geodesic path in $\Gamma_3\cup\Gamma_4\cup\dots\cup\Gamma_{N-2}$ from $\Gamma_2$ to $\Gamma_{N-1}$  that is contained in every properly parallel sequence for $\Gamma_2,\Gamma_3,\dots,\Gamma_{N-1}$ obtained in Lemma~\ref{lem:fixed_interesection_path}. In particular, $\gamma$ contains no edges of $\Gamma_2$ or $\Gamma_{N-1}$. Let $\theta$ be a geodesic path in $\Theta_5\cup\Theta_6\cup\dots\cup\Theta_{N-4}$ from $\Theta_4$ to $\Theta_{N-3}$ that is contained in every sequence properly parallel to $\Theta_4,\dots,\Theta_{N-3}$ from Lemma~\ref{lem:fixed_interesection_path}. Note that $\gamma$ and $\theta$ are defined independently of $g$.

Let $T:=(\Gamma_1\cup\Gamma_2\cup\dots\cup\Gamma_N)\cap(g\Theta_1\cup g\Theta_2\cup\dots\cup g\Theta_3)$, which, by Lemma~\ref{lem:intersection_tree}, is an $F$-tree. Let $\pi_0$ be the up to $F$-equivalence unique path in $T$ from $\Gamma_1\cap g\Theta_1$ to $\Gamma_N\cap g\Theta_N$ that does not contain edges of $\Gamma_1\cap \Theta_1$ or of $\Gamma_N\cap \Theta_N$. Such a path exists by construction and is unique (up to $F$-equivalence) because we are considering connected subsets of an $F$-tree. Let $\pi$ be the $F$-subpath of $\pi_0$ that intersects $\Gamma_2$ and $\Gamma_{N-1}$ in exactly a vertex each. Since $\gamma$ is contained in $T$ and it is an $F$-reduced path that intersects $\Gamma_2$ and $\Gamma_{N-1}$ in exactly a vertex each, we conclude that, up to $F$-equivalence, $\pi$ and $\gamma$ coinicide.

Observe that $\Gamma_2\cap g\Theta_5=\emptyset$ and $g\Theta_{N-4}\cap\Gamma_{N-1}=\emptyset$, as we are considering geodesic sequences. Therefore, the initial vertex of $\gamma$ is contained in $g\Theta_1\cup g\Theta_2\cup g\Theta_3\cup g\Theta_4$ and, likewise, its terminal vertex is contained in $g\Theta_{N-3}\cup g\Theta_{N-2}\cup g\Theta_{N-1}\cup g\Theta_{N}$. Therefore, $\gamma$ contains an $F$-subpath in $g\Theta_5\cup g\Theta_6\cup\dots\cup g\Theta_{N-4}$ intersecting both $g\Theta_4$ and $g\Theta_{N-3}$ in exactly a vertex. As above, we conclude that, up to $F$-equivalence, this $F$-subpath coincides with $g\theta$. In other words: $g\theta$ is an $F$-subpath of $\gamma$. We have $d_{\dot X}(\iota\gamma,\tau\gamma)\geq (N-1)-2-1>1$ and $d_{\dot X}(\iota\theta,\tau\theta)\geq (N-3)-4-1>1$. Moreover, $d_{\dot X}(\iota \gamma,g\iota\theta)\leq 4$. Thus, we may apply Lemma~\ref{lem:path_intersection_shift} with $\epsilon=4$ to conclude: there exist at most $4p$ elements $g\in G(\Gamma)$ such that the initial subsegmens of length $N$ of $\Gamma_1,\Gamma_2,\dots,\Gamma_n$ and $g\Theta_1,g\Theta_2,\dots,g\Theta_n$ are properly parallel.

For the case that the terminal subsegments of length $N$ are properly parallel, the same observation holds, bringing our count of elements $g$ to at most $8p$.

Finally, consider the case that both the initial subsegment and the terminal subsegment of length $N$ are not properly parallel. This means there exist $1\leq i\leq N$ and $n-N+1\leq j\leq n$ with $\Gamma_i=g\Theta_i$ and $\Theta_j=g\Gamma_j$. If we have another $g'$ with  $\Gamma_i=g'\Theta_i$ and $\Theta_j=g'\Gamma_j$, then Lemma~\ref{lem:stabilizing_segments} shows that $g'\Theta_t=g\Theta_t$ for every $i\leq t\leq j$. In particular, this holds for $N\leq t\leq N+2$, because $N+2\leq n-N+1$. Therefore, given $i$ and $j$, there exists at most 1 such $g$ by Lemma~\ref{lem:tripleautomorphism}. This gives a grand total of at most $8p+N^2$ elements.
\end{proof}


\subsection{Characterization of elliptic elements}
\label{sec: elliptics}

In this section, we give a complete characterization of the elliptic elements for the action of $G(\Gamma)$ on $\dot X$. In particular, we show that every element is either elliptic or hyperbolic. As a corollary, we obtain a generalization of the Torsion Theorem for small cancellation groups to graphical small cancellation theory, even over free products, thus generalizing the corresponding classical results \cite{Greendlinger1960torsion,McCool1968}, see also \cite{Gro03,Oll06,Gruber:2015fu,Steenbock2015} for results on the torsion-freeness of certain graphical small cancellation groups.
\begin{prop}\label{prop:elliptic}
Let $\Gamma$ be a $\scc$-labelled graph or a $\scsc$-labelled graph, and let $g\in G(\Gamma)$. Then the following are equivalent:
\begin{itemize}
\item $g$ is elliptic for the action on $\dot X$,
\item $g$ is not hyperbolic for the action on $\dot X$,
\item $g$ is conjugate to an element of $G(\Gamma)$ represented by a word all whose powers occur on $\Gamma$.
\end{itemize}
\end{prop}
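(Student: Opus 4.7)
The plan is to prove the implications $(3)\Rightarrow(1)\Rightarrow(2)\Rightarrow(3)$. The first two are short. For $(3)\Rightarrow(1)$, suppose $g=hg'h^{-1}$ with $g'$ represented by a word $w$ all whose powers $w^n$ label paths in $\Gamma$. Since each component of $\Gamma$ embeds as a relator in $X$, each such path gives vertices $u_n, u_nw^n\in X$ lying in a common relator, so $d_{\dot X}(1,w^n)\leq 1$ for every $n$. Hence the orbit $\{w^n\}_n$ is bounded in $\dot X$, so $g'$, and therefore $g$, is elliptic. The implication $(1)\Rightarrow(2)$ is immediate.

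For the substantial implication $(2)\Rightarrow(3)$, hyperbolicity of $\dot X$ (\autoref{thm:hyperbolicity}) gives that $g$ is elliptic, loxodromic, or parabolic. I first rule out parabolic elements: if $g$ has zero asymptotic translation length in $\dot X$ but unbounded orbits, choose $n$ with $d_{\dot X}(v,g^n v)$ large but $d_{\dot X}(v,g^n v)/n$ small. Let $\gamma$ be a geodesic sequence of relators from $v$ to $g^n v$; its $g^n$-translate $g^n\gamma$ runs from $g^n v$ to $g^{2n}v$, with endpoints at $\dot X$-distance $d_{\dot X}(v,g^n v)$ from those of $\gamma$. Applying \autoref{prop:parallel} yields long parallel middle segments of $\gamma$ and $g^n\gamma$, and then invoking the rigidity of \autoref{lem:uniqueintersectionpath}, \autoref{lem:stabilizing_segments}, and \autoref{lem:tripleautomorphism} (as in the proof of \autoref{prop:action}) forces $g^n$ to stabilize an arbitrarily long geodesic sequence of relators. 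A fixed geodesic sequence however makes $g^n$ either elliptic (contradicting unboundedness) or loxodromic (contradicting zero translation length). Hence $g$ must be elliptic.

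To treat the elliptic case, consider the bounded orbit in $\dot X$ of a basepoint $v\in X$. Pick a geodesic sequence $\gamma$ from $v$ to $gv$; the concatenation of its translates $\gamma,\,g\gamma,\,g^2\gamma,\ldots$ traces the $g$-orbit but must repeatedly bend back by $\dot X$-boundedness. Applying \autoref{prop:parallel} to pairs $(\gamma,g^k\gamma)$ and the rigidity lemmas above, one extracts (possibly after adjusting the basepoint) a $g$-invariant relator $\Gamma_0$ on which $g$ acts as a label-preserving automorphism of the component $\Gamma_0'\subseteq\Gamma$ embedded as $\Gamma_0$. The label $w$ of any path in $\Gamma_0$ from $v$ to $gv$ then satisfies $g=vwv^{-1}$ in $G(\Gamma)$ (identifying $v$ with its group element), and iterating the label-preserving automorphism exhibits each $w^n$ as the label of a path in $\Gamma_0'\subseteq\Gamma$, establishing $(3)$.

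The main obstacle will be excluding parabolic elements: the proposition's hypothesis is only $C'(1/6)$ (or $C'_*(1/6)$) without the $p$-bound needed for the uniform acylindricity of \autoref{thm:acylindricity}, so the proof cannot invoke acylindricity directly and must leverage the parallelism and convexity structure developed throughout \autoref{section:definitions_and_notation}. A closely related difficulty in the elliptic case is extracting a single $g$-invariant relator, since the minimum $\dot X$-displacement of $g$ can exceed $1$, so the naive ``common relator'' argument does not immediately apply and one must iterate along the orbit.
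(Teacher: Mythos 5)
Your implications $(3)\Rightarrow(1)\Rightarrow(2)$ are fine and match what is implicit in the paper. The substantive implication $(2)\Rightarrow(3)$, however, diverges from the paper's argument and contains two gaps that look fatal.

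The paper does not go through the parallelism machinery at all. It picks a word $w$ of minimal length among representatives of elements some power of which is conjugate to $g$, appeals to a dichotomy from Gruber's earlier work (\cite[Section~4]{Gruber:2015ii}): either all powers of $w$ occur on $\Gamma$, or subwords of powers of $w$ occurring on $\Gamma$ have bounded length. In the first case $(3)$ holds and $g$ is elliptic; in the second case a direct $\Gamma$-reduced-diagram analysis (using Strebel's bigon classification and a bound on covering segments of $\gamma_n$) yields $d_{\dot X}(1,g^n)$ growing linearly in $n$, i.e.\ $g$ is loxodromic. Parabolics never need to be treated separately.

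Your parabolic-exclusion step is not justified. The passage from ``$\gamma$ and $g^n\gamma$ are parallel'' to ``$g^n$ stabilizes an arbitrarily long geodesic sequence'' invokes the proof of \autoref{prop:action}, but that proof depends crucially on \autoref{lem:path_intersection_shift}, which uses the $C'(1/6,p)$ (bounded-power) hypothesis to limit translation along a relator. Under the bare $C'(1/6)$ assumption of the proposition, parallelism gives you no such stabilization, and the counting/rigidity argument of \autoref{prop:action} cannot be transplanted. You correctly flag that acylindricity is unavailable, but the proposed substitute inherits exactly the same dependence on the $p$-bound.

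Your elliptic-case extraction of a $g$-invariant relator is false as stated. Consider \autoref{exa: pride}: the element $a$ is elliptic (all powers $a^k$ label paths on $\Gamma$, so $d_{\dot X}(1,a^k)\leq 1$), yet $\langle a\rangle\cong\Z$ and the relators are cycle graphs labelled by words that are not proper powers, hence have trivial label-preserving automorphism group. Therefore no nontrivial element preserves any relator, and in particular $a$ does not. Condition $(3)$ holds for $a$ because the powers of $a$ occur as labels of paths \emph{scattered across different relators}, not via iterating a single automorphism of a single component. The argument you propose (a $g$-invariant $\Gamma_0$ and iteration of an automorphism $\phi$) is essentially \autoref{cor:torsion_theorem}, which is only available when $g$ has finite order; applying it to arbitrary elliptic $g$ is exactly the step that fails.
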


\begin{proof}
Let $g\in G(\Gamma)$, and let $w$ be a word in $S$ minimizing the lengths of words representing elements of the set $\{h\in G(\Gamma):\exists n\in \N: h^n\sim g\}\subseteq G(\Gamma)$, where $\sim$ denotes conjugacy in $G(\Gamma)$. We consider the following two cases studied by Gruber in \cite[Section~4]{Gruber:2015ii}:
\begin{itemize}
\item[(1)]\label{case:elliptic} every power of $w$ occurs on $\Gamma$, or
\item[(2)]\label{case:hyperbolic} there exists some $C_0>0$ such that the longest subword of a power of $w$ occurring on $\Gamma$ has length at most $C_0$.
\end{itemize}

In case (1), the element $h$ represented by $w$ is elliptic, whence so is $g$. It remains to show that in case (2), $h$ is hyperbolic, which will imply hyperbolicity of $g$.

It is shown in \cite[Section 4]{Gruber:2015ii} that, for every $n$, there exist a geodesic word $g_n$ representing $w^n$ and a $\Gamma$-reduced diagram $B_n$ with boundary word $w^ng_n^{-1}$ such that
\begin{itemize}
\item every disk component of $B_n$ is a single face or has shape $I_1$ as in Figure~\ref{figure:bigons}, its two sides being subpaths of the sides of $B_n$ corresponding to $w^n$ (denoted $\omega_n$) and corresponding to $g_n^{-1}$ (denoted $\gamma_n$), 
\item every face $\Pi$ of $B_n$ has $|\partial\Pi|<6C_0$,
\item the intersection of every face $\Pi$ with $\omega_n$ has length less than $(2/3)|\partial \Pi|$.
\item if $\Pi$ and $\Pi'$ are consecutive faces in a disk component, then $|\Pi\cap\gamma_n|>|\partial\Pi|/6$ or $|\Pi'\cap\gamma_n|>|\partial\Pi'|/6$. (Here, consecutive means: sharing interior edges.)
\end{itemize}
The first 3 bullets are stated explicitly in \cite{Gruber:2015ii}, and the last bullet is deduced as follows: if both $\Pi$ and $\Pi'$ do not satisfy the claim, then $|\Pi\cap\omega_n|>|\Pi|/2$ and $|\Pi'\cap\omega_n|>|\partial\Pi'|/2$ by the small cancellation hypothesis. Therefore, both $|\partial\Pi|\geq2|w|$ and $|\partial\Pi'|\geq 2|w|$ by the minimality hypothesis on $w$. Hence, \cite[Lemma~4.10]{Gruber:2015ii} shows that both $\Pi$ and $\Pi'$ are special in the sense of \cite[Lemma~4.11]{Gruber:2015ii}, whence we may apply \cite[Lemma~4.11]{Gruber:2015ii} as follows: if $a$ is the arc in the intersection of $\Pi$ and $\Pi'$, then $|\Pi\cap\omega_n|+|a|<|w|+|\partial\Pi|/6\leq 2|\partial\Pi|/3$. Apart from $a$, $\Pi$ has at most one additional interior arc, and this arc must have length less than $|\partial\Pi|/6$. Therefore, $|\Pi\cap\gamma_n|>|\partial\Pi|/6$, a contradiction.

Let $\sigma$ be a covering segment, i.e. a copy of a path in $\Gamma$ that is a subpath of $\gamma_n$. If $\sigma$ completely contains $\partial\Pi\cap\gamma_n$ for a face $\Pi$ with $|\partial\Pi\cap\gamma_n|>|\partial\Pi|/6$, then $\sigma\cup\Pi^{(1)}$ lifts to $\Gamma$, because $\sigma\cap\Pi$ is not a piece.  We show that $\sigma$ cannot contain $(\Pi_1\cup\Pi_2)\cap\gamma_n$ for 2 consecutive faces $\Pi_1$ and $\Pi_2$ in a disk component: for a contradiction, assume $\sigma$ does contain it. Suppose that $|\Pi_1\cap\sigma|>|\partial\Pi_1|/6$. (Here we use the fourth bullet; the case where this holds for $\Pi_2$ follows by symmetry.) Then $\sigma\cup\Pi_1^{(1)}$ lifts to $\Gamma$, because $\sigma\cap\Pi_1$ is not a piece. Since $B_n$ is $\Gamma$-reduced, two consecutive faces in a disk component cannot lift together to $\Gamma$. Hence, $(\Pi_1\cap\Pi_2)\cup(\Pi_2\cap\sigma)$ is a piece (considering this embedded line graph as path). Thus, $\partial\Pi_2$ is a concatenation of at most 2 pieces and $\Pi_2\cap\omega_n$. But this implies $|\Pi_2\cap\omega_n|>(2/3)|\partial\Pi_2|$, a contradiction to the third bullet. Hence, $\sigma$ cannot contain the intersection of $\gamma_n$ with a disk component of $B_n$ of shape $I_1$, and, moreover, the parts of $\sigma$ contained in disk components of shape $I_1$ have total length less than $4(6C_0)/2=12C_0$ (using the second bullet).

If $\sigma$ contains $\Pi\cap\gamma_n$, where $\Pi$ is its own disk component, then, $\sigma\cup\Pi^{(1)}$ lifts to $\Gamma$, and we can replace $\sigma$ by a path $\tilde \sigma$  containing $\Pi\cap \omega_n$ instead of $\Pi\cap\gamma_n$, and $\tilde\sigma$ is also a copy of a path in $\Gamma$. Since $g_n$ is a geodesic word, we have $|\tilde\sigma|\geq |\sigma|$. We conclude that the parts of $\sigma$ contained in disk components consisting of single faces together with the parts of $\sigma$ not contained in any faces at all have total length at most $C_0$.

We conclude $|\sigma|<13C_0$. Since every disk component of $B_n$ has shape $I_1$, we have $|g_n|\geq n|w|/C_0$.  Hence, by \cite[Proposition~3.6]{Gruber:2014wo}, we obtain $d_{\dot X}(1,h^n)> n(|w|/C_0)/(13C_0)$, whence $h$ and, therefore, $g$ is hyperbolic.
\end{proof}

\begin{coro}[Torsion theorem]\label{cor:torsion_theorem} Let $\Gamma$ be a $\scc$-labelled graph or $\scsc$-labelled graph, and let $g\in G(\Gamma)$ be of order $n\in\N\setminus \{1\}$. Then there exist a connected component $\Gamma_0$ of $\Gamma$ and a label-preserving automorphism $\phi$ of $\Gamma_0$ of order $n$ such that $g$ is conjugate to the element of $G(\Gamma)$ represented by the label of a path $v\to\phi(v)$ for a vertex $v$ in $\Gamma_0$, or (only in the free product case) $g$ is conjugate to an order $n$ element of a generating factor.
\end{coro}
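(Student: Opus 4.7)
The idea is to invoke \autoref{prop:elliptic} to reduce to the elliptic case and then extract the automorphism directly from the $\scc$-condition. Since any hyperbolic element of the action on $\dot X$ has infinite order, the finite-order element $g$ must be non-hyperbolic. Thus \autoref{prop:elliptic} provides an element $h$ conjugate to $g$ and a word $w$ representing $h$ all of whose powers label paths in $\Gamma$. We may further take $w$ to be cyclically reduced and not a proper power of a shorter word. In the free product case, if $w$ is contained in a single attached Cayley graph $\Cay(G_i, S_i) \subseteq \Gamma$, then $h$ is an order-$n$ element of the generating factor $G_i$ (using that $G_i \hookrightarrow G(\Gamma)$), which yields the second alternative.

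Otherwise, select a path in $\Gamma$ labeled $w^{n+1}$ and let $\Gamma_0$ denote the connected component containing it. Write $v_0, v_1, \dots, v_{n+1}$ for the consecutive vertices on this path with $v_i \to v_{i+1}$ bearing the label $w$. The label-preserving embedding $\iota\colon \Gamma_0 \hookrightarrow \Cay(G(\Gamma), S)$ sends $v_i$ to $h^i x$, where $x = \iota(v_0)$. Since $h$ has order $n$ and $\iota$ is injective, we deduce $v_n = v_0$ and $v_{n+1} = v_1$. The closed subpath labeled $w^n$ based at $v_0$ is, by the assumptions on $w$ together with the injectivity of $\iota$, a simple cycle of length $n|w|$ in $\Gamma_0$.

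Consider the two closed $w^n$-subpaths $\alpha\colon v_0 \to \cdots \to v_n$ and $\beta\colon v_1 \to \cdots \to v_{n+1}$. If $w^n$ were a piece in the sense of \autoref{def: graphical small cancellation} (or \autoref{def: small cancellation condition - free product}), the $\scc$-condition applied to the simple loop $\alpha$ would give $|w^n| < |\alpha|/6 = n|w|/6$, an absurdity. Hence $w^n$ is not a piece, and by definition there exists a label-preserving automorphism $\phi$ of $\Gamma$ with $\phi \circ \alpha = \beta$. This forces $\phi(v_0) = v_1$ and, by label-preservation along the loop, $\phi(v_i) = v_{i+1}$ for $0 \leq i \leq n$. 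Since $\phi$ permutes components of $\Gamma$ and sends $v_0 \in \Gamma_0$ into $\Gamma_0$, it restricts to a label-preserving automorphism of $\Gamma_0$.

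It remains to verify that $\phi$ has order exactly $n$. Both $\iota \circ \phi$ and $h \circ \iota$ are label-preserving maps $\Gamma_0 \to \Cay(G(\Gamma), S)$ that agree at $v_0$ (each sends it to $hx$); since any label-preserving map into $\Cay(G(\Gamma), S)$ is determined by its value at a single vertex, we get $\iota \circ \phi = h \circ \iota$, and iterating gives $\iota \circ \phi^k = h^k \circ \iota$. From $h^n = 1$ and injectivity of $\iota$, we conclude $\phi^n = \mathrm{id}$. If $\phi^m = \mathrm{id}$ for some $0 < m < n$, then $h^m$ would fix every vertex of $\iota(\Gamma_0)$, and freeness of the $G(\Gamma)$-action on $\Cay(G(\Gamma), S)$ would force $h^m = 1$, contradicting the order of $h$. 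Hence $\phi$ has order exactly $n$, and $h$ is represented by the label of the path $v_0 \to \phi(v_0)$ in $\Gamma_0$. The principal delicate point is establishing the simplicity of the $w^n$-cycle in $\Gamma_0$, which requires careful exploitation of the choices made on $w$ together with $\iota$ being an embedding.
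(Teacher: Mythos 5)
You correctly reduce to the elliptic case via \autoref{prop:elliptic}, set up the path labelled $w^{n+1}$, and aim to deduce the non-pieceness of $w^n$ from the $\scc$-condition, which is in line with the paper's strategy. However, there is a genuine gap at the step you yourself flag as "the principal delicate point": the assertion that the closed $w^n$-subpath $\alpha$ based at $v_0$ is a \emph{simple} cycle in $\Gamma_0$. You offer no argument for this, and it is not clear that it holds. The path $\alpha$ is reduced (because $w$ is cyclically reduced), but a reduced closed path in a reduced labelled graph may self-intersect; injectivity of $\iota$ only tells you that $\alpha$ is simple in $\Gamma_0$ iff its image is simple in $\Cay(G(\Gamma),S)$, and that image is a closed path labelled $w^n$ which has no reason to be embedded (the relation $w^n=1$ in $G(\Gamma)$ does not preclude coincidences among the intermediate vertices $x\cdot\overline{u}$ for proper prefixes $u$ of $w^n$). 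Your application of the $C'(1/6)$-condition "to the simple loop $\alpha$" therefore does not go through as written.

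The paper sidesteps this: rather than claiming the whole cycle is simple, it observes that the closed path labelled $w^k$ is not $F$-homotopically trivial and hence contains a \emph{simple} non-$F$-trivial subloop $\sigma$; and that in a reduced labelled graph any subpath of a piece is itself a piece (label-preserving automorphisms are determined by their value at one vertex, so agreement on a subpath forces agreement everywhere). Thus, if $w^n$ were a piece, the subword labelling $\sigma$ would be a piece that is the entire label of a simple closed loop, violating the $C'(1/6)$-condition. Replacing your simplicity claim by this extraction argument would close the gap and leave the rest of your proof essentially intact.

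Two smaller remarks. First, writing $\iota(v_i)=h^i x$ and invoking "$h\circ\iota$" as a label-preserving map clashes with the standard right Cayley-graph convention (left multiplication is the label-preserving action, so $\iota(v_i)=xh^i$); normalizing $x=\iota(v_0)=1$ makes the argument read correctly. Second, replacing $w$ by a non-proper-power $u$ is problematic if you also insist that the represented element be conjugate to $g$, since $\overline{u}$ need not be; fortunately the "not a proper power" hypothesis is never actually used in the remainder of your argument and can be dropped.
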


\begin{proof} Since $g$ has finite order, it must act elliptically on $\dot X$. Therefore, by Proposition~\ref{prop:elliptic}, a conjugate of $g$ is represented by a word $w$ such that every power of $w$ occurs on $\Gamma$. Now since every component of $\Gamma$ embeds into $\Cay(G(\Gamma),S)$, non-triviality and finite order of $g$ imply that some proper power of $w$ must occur on a \emph{closed} path $\gamma$ in a component $\Gamma_0$ of $\Gamma$, say starting from some vertex $v$, labelled by $w^k$ for some $k>1$. We choose $k$ minimal with this property. Then $g$ has order $k$.

If (in the free product case) $w$ is contained in a single generating factor, then the second claim holds, because each generating factor embeds in $G(\Gamma)$. Otherwise, $\gamma$ is not $F$-homotopically trivial, and the small cancellation condition implies that $\gamma$ cannot be a piece. Thus, there exists an automorphism $\phi$ of $\Gamma_0$ such that $\phi(v)$ lies on $\gamma$ and the initial subpath $\pi$ of $\gamma$ from $v$ to $\phi(v)$ is labelled by $w$. Since the path $\pi\phi(\pi)\phi^2(\pi)\dots\phi^{k-1}(\pi)$ is labelled by $w^k$ and, therefore, is closed, we obtain that $\phi$ has order $k$. This argument also applies in the free group case.
\end{proof}

\subsection{Description of maximal elementary subgroups}

We show that the elementary closure of every hyperbolic element is infinite cyclic or infinite dihedral. In particular, if there is no even torsion, it must be infinite cyclic.

\begin{prop}\label{prop:virtually_cyclic} Let $\Gamma$ be a $\scc$-labelled graph or a cylinder-free $\scsc$-labelled graph. Let $g$ be a hyperbolic element for the action of $G(\Gamma)$ on $\dot X$, and let $h$ be an elliptic element such that $g$ and $h$ commute. Then $h=1$.
\end{prop}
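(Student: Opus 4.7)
We argue by contradiction, assuming $h \neq 1$.

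\emph{Step 1 (finite order of $h$).} Since $h$ is elliptic on $\dot X$, every $\langle h \rangle$-orbit is bounded: fix a vertex $x \in \dot X$ and let $\epsilon := \sup_{k \in \Z} d_{\dot X}(x, h^k x) < \infty$. By \autoref{thm:acylindricity}, the action on $\dot X$ is acylindrical, so there exist constants $L$ and $N$ for which any two vertices in $\dot X$ at distance at least $L$ are simultaneously $\epsilon$-moved by at most $N$ elements of $G(\Gamma)$. Choose $m$ so that $d_{\dot X}(x, g^m x) \geq L$, which is possible because $g$ is hyperbolic. Commutativity then gives $d_{\dot X}(h^k g^m x, g^m x) = d_{\dot X}(g^m h^k x, g^m x) = d_{\dot X}(h^k x, x) \leq \epsilon$ for every $k \in \Z$, so $\{h^k : k \in \Z\}$ is finite and $h$ has finite order.

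\emph{Step 2 (some conjugate of $h$ stabilizes a relator).} By \autoref{cor:torsion_theorem}, the finite-order element $h$ is, up to conjugation, of one of two types: (a) an element represented by the label of a path $v \to \phi(v)$ for a label-preserving automorphism $\phi$ of a component $\Gamma_0$ of $\Gamma$, or (b) -- only in the free product case -- an element of a generating factor. In case (a), a direct check using the label-preserving embedding $\Gamma_0 \hookrightarrow X$ shows that the conjugate acts on the embedded copy of $\Gamma_0$ precisely as $\phi$ acts on $\Gamma_0$, and hence stabilizes the corresponding relator. Case (b) is handled similarly, with the cylinder-free hypothesis ensuring stabilization of a full relator containing the attached Cayley graph. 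Simultaneously conjugating $h$ and $g$ by a common element preserves commutativity as well as the hyperbolic/elliptic character of the two elements, so we may assume throughout that $h$ stabilizes a relator $\Gamma_0$.

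\emph{Step 3 (long stabilized geodesic sequence).} From $hg = gh$ and $h\Gamma_0 = \Gamma_0$ we deduce $h(g^M \Gamma_0) = g^M (h \Gamma_0) = g^M \Gamma_0$ for every $M \in \Z$. Because $g$ is hyperbolic, $d_{\dot X}(\Gamma_0, g^M \Gamma_0) \to \infty$ as $|M| \to \infty$; pick $M$ so that this distance is at least $2$. A geodesic sequence of relators $\Delta_1 = \Gamma_0, \Delta_2, \ldots, \Delta_n = g^M \Gamma_0$ then has $n \geq 3$ terms. Because $h$ fixes the endpoints $\Delta_1$ and $\Delta_n$, \autoref{lem:stabilizing_segments} yields $h \Delta_i = \Delta_i$ for every $1 \leq i \leq n$.

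\emph{Step 4 (contradiction).} In particular, the three consecutive relators $\Delta_1, \Delta_2, \Delta_3$ are each fixed by $h$. \autoref{lem:tripleautomorphism}, which relies on the $\scc$-condition or on cylinder-freeness in the $\scsc$ setting, then forces $h = 1$, contradicting our standing assumption.

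\emph{Main obstacle.} The most delicate point is Step 2: establishing, particularly in the free product setting, that a conjugate of the finite-order element $h$ stabilizes an \emph{entire} relator rather than merely a proper subgraph such as an attached Cayley graph. The cylinder-free assumption from \autoref{defi:cylinder-free} is designed precisely to provide this control, and it feeds naturally into the invocation of \autoref{lem:tripleautomorphism} at the last step.
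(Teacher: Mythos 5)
Your Step 1 (using acylindricity to show $h$ has finite order) is correct and is a genuinely different route from the paper, which never establishes finite order but instead works directly from \autoref{prop:elliptic}: it produces (after conjugation) a word $w$ all of whose powers occur on $\Gamma$, and then splits into two cases according to whether some component of $\Gamma$ has a path $\pi$ labelled $w$ together with an automorphism taking $\iota\pi$ to $\tau\pi$. Your Steps~3 and 4 (stabilize a long geodesic sequence and invoke \autoref{lem:stabilizing_segments} and \autoref{lem:tripleautomorphism}) coincide with the paper's endgame.

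The gap is in your Step 2, case (b). When \autoref{cor:torsion_theorem} places $h$ (up to conjugation) inside a generating factor $F_i$, you claim that cylinder-freeness forces $h$ to stabilize a full relator containing the attached Cayley graph $C:=\Cay(F_i,S_i)$. This is not what cylinder-freeness says: it constrains automorphisms of a component that preserve \emph{two disjoint} attached Cayley graphs, and it does not promote the action of $h$ on the single subgraph $C$ to an automorphism of an entire relator. Concretely, $hC=C$ only tells you that $h$ permutes the (possibly many) relators containing $C$; it gives you no relator that $h$ fixes setwise, which is what Step~3 needs as a starting point.

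The paper avoids this by a different device: in the case where the word $w$ representing $h$ never admits such an automorphism inside $\Gamma$, it observes that every path labelled by a power of $w$ is a piece, so one may adjoin a fresh component $\Theta$ to $\Gamma$ (in the free product case, the completion of a bi-infinite line labelled by powers of $w$, which collapses to a standalone copy of $\Cay(F_i,S_i)$ when $w\in F_i$). Adjoining $\Theta$ changes neither $G(\Gamma)$, nor $X$, nor $\dot X$, nor the small cancellation condition, and now $\Theta$ itself is a relator stabilized by $h$, after which the argument runs as in your Steps 3--4. You would need to incorporate this (or a substitute) to close the free product case; in the free group case your argument is complete, since case (b) does not arise.
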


\begin{proof}
Proposition~\ref{prop:elliptic} tells us that there exists $t\in G(\Gamma)$ such that the conjugate $tht^{-1}$ of $h$ is represented by a (possibly empty) cyclically reduced word all whose powers appear on $\Gamma$. We have that $tht^{-1}$ and $tgt^{-1}$ commute if and only if $g$ and $h$ do, $h$ is elliptic if and only if $tht^{-1}$ is, and $g$ is hyperbolic if and only if $tgt^{-1}$ is. Hence, without loss of generality, we assume that $h$ itself is represented by a cyclically reduced word $w$ such that all powers of $w$ appear on $\Gamma$. In the free product case, we can additionally assume that $w=w_1w_2\dots w_k$, where each $w_j$ contains letters from a single generating factor and does not represent the identity in that factor, any two consecutive $w_j$ correspond to different generating factors, and, if $k>1$, then $w_1$ and $w_k$ do not correspond to the same generating factor.

First, assume there exists a path $\pi$ in a component $\Gamma_0$ of $\Gamma$ such that $\pi$ is labelled by $w$, and such that there exists an automorphism of $\Gamma_0$ that takes $\iota \pi$ to $\tau \pi$. This implies: if $\Gamma_1$ is the relator in $X$ that is the image of $\Gamma_0$ under the map induced by $\iota \pi\mapsto 1\in G(\Gamma)$, then $h\Gamma_1=\Gamma_1$.  Now choose $k$ such that $d_{\dot X}(1,g^k)\geq 3$. In particular, this ensures $\Gamma_1\cap g\Gamma_1=\emptyset.$ Choose a geodesic sequence $\Gamma_1,\Gamma_2,\dots,\Gamma_l=g\Gamma_1$. Then $h\Gamma_1=\Gamma_1$ and $h\Gamma_l=hg\Gamma_1=gh\Gamma_1=g\Gamma_1=\Gamma_l$. Therefore, Lemmas~\ref{lem:stabilizing_segments} and \ref{lem:tripleautomorphism} prove the claim.

Now assume that there does not exist such an automorphism as above. This implies: any path labelled by a power of $w$ is a piece in $\Gamma$. In the free group case, denote by $\Theta$ a bi-infinite line graph labelled by the powers of $w$. In the free product case, denote by $\Theta$ the completion (see Definition~\ref{def: small cancellation condition - free product}) of a bi-infinite line-graph labelled by the powers of $w$. Consider the graph $\hat \Gamma:=\Gamma\sqcup\Theta$. We claim that  $\hat\Gamma$ satisfies the same small cancellation condition as $\Gamma$: all powers of $w$ label pieces in $\Gamma$. Hence, in the free group case, adding $\Theta$ does not introduce new pieces. In the free product case, observe that whenever a path $p$ in $\Gamma$ is a piece, then so is every path in the completion of the support of $p$. Therefore, again, adding $\Theta$ does not introduce new pieces. Constructing the completion of $\Theta$ as described in \autoref{rema:cylinder_free_classical} readily shows that all closed paths in $\Theta$ are $F$-trivial. Thus, there are also no new simple closed paths to consider when checking the small cancellation condition. 

We have observed that $\hat \Gamma$ defines the same group as $\Gamma$ and thus the same Cayley graph $X$. Furthermore, the assumption that all powers of $w$ appear on $\Gamma$ implies that $\hat\Gamma$ also yields the same coned-off space $\dot X$. Now the connected component $\Theta$ of $\hat\Gamma$ admits a path $\pi$ labelled by $w$ and an autmorphism taking $\iota \pi$ to $\tau \pi$. Hence, we may apply our above argument for $\hat\Gamma$ instead of $\Gamma$.
\end{proof}

\begin{coro}\label{coro:virtually_cyclic}
 Let $H$ be a virtually cyclic subgroup of $G(\Gamma)$ that contains a hyperbolic element. Then $H$ is either infinite cyclic or infinite dihedral. In particular, if $G(\Gamma)$ has no even torsion, then $H$ is infinite cyclic.
\end{coro}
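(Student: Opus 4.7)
The plan is to use the classification of virtually cyclic groups together with Proposition~\ref{prop:virtually_cyclic}. Every virtually cyclic group $H$ admits a (unique) maximal finite normal subgroup $F$ such that $H/F$ is isomorphic to $\Z$ or to the infinite dihedral group $D_\infty$; this is a standard fact. My strategy is to show that under our hypotheses $F$ must be trivial, forcing $H\cong\Z$ or $H\cong D_\infty$. The second assertion then follows since $D_\infty$ contains (many) elements of order~$2$.

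First I would observe that every element of $F$ is elliptic for the action on $\dot X$: indeed, any torsion element acts with bounded orbits (its orbit is a finite set), so by Proposition~\ref{prop:elliptic} it cannot be hyperbolic, hence is elliptic. Next, let $g\in H$ be a hyperbolic element, which exists by assumption. Since $F$ is finite and normal in $H$, conjugation by $g$ induces a permutation of $F$, and thus some positive power $g^k$ centralizes $F$ pointwise. Note that $g^k$ is still hyperbolic.

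Now I apply Proposition~\ref{prop:virtually_cyclic}: for every $f\in F$, the elements $g^k$ (hyperbolic) and $f$ (elliptic) commute, so $f=1$. Consequently $F=\{1\}$, and the classification of virtually cyclic groups yields $H\cong\Z$ or $H\cong D_\infty$.

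Finally, assume $G(\Gamma)$ has no even torsion. The infinite dihedral group contains elements of order~$2$, so no subgroup of $G(\Gamma)$ can be isomorphic to $D_\infty$. Hence $H\cong\Z$, completing the proof. The only substantive ingredient is Proposition~\ref{prop:virtually_cyclic}; the rest is essentially bookkeeping with the classification of virtually cyclic groups, so I do not anticipate a real obstacle.
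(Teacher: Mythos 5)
Your proof is correct and takes essentially the same approach as the paper: identify the maximal finite normal subgroup $F$ of the virtually cyclic group $H$, produce a hyperbolic element of $H$ centralizing $F$, and apply Proposition~\ref{prop:virtually_cyclic} to each $f\in F$ to conclude $F=\{1\}$, then rule out $D_\infty$ using the absence of even torsion. The one small difference is the route to the centralizing hyperbolic element: the paper writes $H=K\ltimes C$ with $C\cong\Z$ or $D_\infty$ and intersects $\langle g\rangle$ with the kernel of the conjugation action of $C$ on $K$, whereas you simply note that conjugation by $g$ induces an automorphism of the finite group $F$ and hence some power $g^k$ centralizes $F$. Your variant avoids invoking a splitting of the extension $1\to F\to H\to \Z\text{ (or }D_\infty)\to 1$, which is a slightly cleaner way to land in the hypotheses of Proposition~\ref{prop:virtually_cyclic}; otherwise the two arguments coincide.
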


\begin{proof}
As $H$ is virtually infinite cyclic, we may write $H=K\ltimes C$, where $C$ is either infinite cyclic or infinite dihedral and contains a hyperbolic element $g$. Both $C_1:=\langle g\rangle$ and the kernel $C_2$ of the action by conjugation of $C$ on $K$ have finite index in $C$. Thus, $C_1\cap C_2$ is non-trivial and contains a hyperbolic element commuting with every element of $K$, whence $K$ is trivial.
\end{proof}

\subsection{The case that $G(\Gamma)$ acts elementarily}

The following proposition will handle the degenerate case of Theorem~\ref{res: main theo - regular sc}.

\begin{prop}\label{prop:elementary} Let $p\in \N$, $n$ odd, and $\Gamma$ be a $C_n'(1/6,p)$-labelled graph whose set of labels $S$ has at least two elements. Then either $G(\Gamma)$ acts non-elementarily on $\dot X$, or $G(\Gamma)\in\vburn{n}$. If $G(\Gamma)$ is finite, then $\Gamma$ contains $\Cay(G(\Gamma),S)$.
\end{prop}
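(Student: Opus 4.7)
The plan is to reduce the statement to a classification of the action of $G(\Gamma)$ on $\dot X$, then handle each case separately. First, observe that the $C_n'(1/6,p)$-condition implies the $C'(1/6,p)$-condition of \autoref{def: small cancellation - bunrside variable exp}: if $w^p$ labels a path in $\Gamma$, then $w^n$ labels a closed path at some vertex $v$, and concatenating such closed loops shows that $w^k$ labels a path from $v$ for every $k\in\N$. Hence \autoref{thm:acylindricity} applies and $G(\Gamma)$ acts acylindrically on the hyperbolic space $\dot X$. By \autoref{prop:elliptic} there are no parabolic elements, so the acylindrical classification gives three options: elliptic, lineal, or non-elementary. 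The non-elementary case is the first conclusion of the proposition.

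Next, I would handle the elliptic case directly. Suppose every orbit of $G(\Gamma)$ on $\dot X$ is bounded. Given any $g\in G(\Gamma)$, \autoref{prop:elliptic} provides a cyclically reduced word $w$ conjugate to $g$ all of whose powers label paths in $\Gamma$. In particular $w^p$ labels a path, so clause \ref{enu: graphical small cancellation - burnside - power} of \autoref{def: graphical small cancellation - burnside} guarantees that $w^n$ labels a closed path, whence $w^n=1$ in $G(\Gamma)$. Conjugation preserves $n$-th powers, so $g^n=1$ and therefore $G(\Gamma)\in\vburn{n}$.

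The main obstacle is ruling out the lineal case, which is where the hypothesis $|S|\geq 2$ and the $C_n'$-condition must interact. Suppose the action is lineal, so $G(\Gamma)$ is virtually cyclic containing a loxodromic element $h$ of infinite order. Pick two distinct generators $a,b\in S$. Any elliptic generator $s$ commuting (in the virtually-cyclic sense, i.e.\ after passing to a finite-index cyclic subgroup) with $h$ must satisfy $s^n=1$ by the elliptic case argument, and one can moreover use \autoref{prop:virtually_cyclic} applied to the abelian/dihedral structure of $G(\Gamma)$ (via \autoref{coro:virtually_cyclic}) to force such $s$ to be trivial; this handles the case when $a$ or $b$ is elliptic. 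If both $a,b$ are loxodromic, their common fixed-point pair in $\partial\dot X$ together with the abelian image in $\Z$ (or $D_\infty$) produces integers $p,q\neq 0$ with $a^q b^{-p}=1$ in $G(\Gamma)$. Apply Greendlinger's lemma to a $\Gamma$-reduced disk diagram bounded by this word: an outermost face $\Pi$ has an exterior arc of length greater than $|\partial\Pi|/2$, realized as a subword of $a^q b^{-p}$ of the form $a^i$, $b^{-j}$, or $a^ib^{-j}$. If the pure-letter portion exceeds $p$ consecutive letters, the $C_n'(1/6,p)$-condition forces $a^n=1$ or $b^n=1$, contradicting infinite order. Otherwise $|\partial\Pi|<4p$, and by iterating the Greendlinger peeling together with a linear isoperimetric argument applied to the family of long relations $a^{kq}b^{-kp}=1$ (for $k\to\infty$), we obtain the required contradiction.

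Finally, for the second assertion, if $G(\Gamma)$ is finite, then the action is necessarily elliptic, so every element satisfies $g^n=1$ by the above. To embed $\Cay(G(\Gamma),S)$ into $\Gamma$, I would argue that every simple closed path $\gamma$ of the finite graph $X=\Cay(G(\Gamma),S)$ is forced to be the boundary of a single face of any $\Gamma$-reduced diagram representing it: since $G(\Gamma)$ is finite the diameters involved are bounded, and Greendlinger's lemma combined with the $C_n'(1/6,p)$-condition and the uniqueness statements from \autoref{lem:uniqueintersectionpath} and \autoref{lem:tripleautomorphism} prevent $X$ from containing any cycle not already coming from a relator. Consequently all cycles of $X$ lift into components of $\Gamma$; assembling these lifts and using that each component of $\Gamma$ embeds into $X$ (by \cite{Oll06,Gruber:2015fu,Gruber:2014wo}), we conclude that some connected component of $\Gamma$ is label-preservingly isomorphic to $\Cay(G(\Gamma),S)$. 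The hardest part of this last step is the rigidity argument ensuring that the various local embeddings agree globally, which is where the strongly-reduced clause of \autoref{def: graphical small cancellation - burnside} is used.
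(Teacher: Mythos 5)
Your overall framing — acylindricity of the action on $\dot X$, no parabolics by \autoref{prop:elliptic}, then case-split into elliptic, lineal, and non-elementary — matches the paper's approach, and your treatment of the elliptic case is a genuine streamlining: applying \autoref{prop:elliptic} directly to each $g$ and invoking clause~\ref{enu: graphical small cancellation - burnside - power} of \autoref{def: graphical small cancellation - burnside} to kill each conjugacy representative immediately gives $G(\Gamma)\in\vburn{n}$, whereas the paper goes through a component-by-component analysis of $\Gamma$ (which it also uses to set up the finite case). That simplification is correct. However, there are two genuine gaps in the remaining cases.

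In the lineal case, your argument is incomplete. The clause ``iterating the Greendlinger peeling together with a linear isoperimetric argument applied to the family of long relations $a^{kq}b^{-kp}=1$'' is not a proof: the linear isoperimetric inequality from small cancellation gives an \emph{upper} bound on the number of faces, which is not in tension with anything, and you never explain how the peeling yields a contradiction. (You also rely on $G(\Gamma)$ being abelian without ruling out infinite dihedral, and the sub-case analysis of elliptic versus loxodromic generators invokes \autoref{prop:virtually_cyclic} in a way that only applies when the elliptic element commutes with a loxodromic, which need not hold in the dihedral case.) The paper's argument is much shorter and avoids all this: since the lineal group is abelian (as part of its proof it suppresses the dihedral possibility, which is excluded because $n$ odd forbids order-two elements via \autoref{cor:torsion_theorem} and the $C_n'$-condition), one takes a $\Gamma$-reduced diagram $D$ for the \emph{commutator} $s_1s_2s_1^{-1}s_2^{-1}$, uses the strongly reduced condition (no loops or bigons) to see $D$ has no cut-vertices, and applies Greendlinger once to force $D$ to be a single face of boundary length $4$; then no single edge can be a piece, giving label-preserving automorphisms $\phi_i$ with $\phi_i(\iota e_i)=\tau e_i$, whence the $C_n'(1/6,p)$-condition makes each $s_i$ have order dividing $n$, contradicting the existence of an infinite-order element.

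For the finite case, your sketch (``assembling these lifts'' and a ``rigidity argument'') is not an argument — there is no mechanism given to ensure that a connected component of $\Gamma$ coincides with all of $\Cay(G(\Gamma),S)$. The paper does this cleanly by first showing (when there are no hyperbolic elements and $\Gamma$ has exactly one component $\Gamma_0$ with non-trivial $\pi_1$) that every element of $G(\Gamma)$ is conjugate into the image $H$ of the homomorphism $\mathrm{Aut}(\Gamma_0)\to G(\Gamma)$ induced by the embedding $\Gamma_0\hookrightarrow\Cay(G(\Gamma),S)$; then $G(\Gamma)=\bigcup_g gHg^{-1}$, and the elementary group-theoretic fact that a group equal to the union of conjugates of a finite-index subgroup equals that subgroup gives $G(\Gamma)=H$ when $G(\Gamma)$ is finite, from which $\Gamma_0=\Cay(G(\Gamma),S)$. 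Your shortcut in the elliptic case, bypassing the construction of $H$, leaves you without the tool the paper uses precisely here.
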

 
 Notice that $p$ does not appear in the proof. The actual (weaker) version of \ref{enu: graphical small cancellation - burnside - power} of \autoref{def: graphical small cancellation - burnside} we require is: whenever, for a cyclically reduced word $w$, all powers of $w$ label paths in $\Gamma$, then $w^n$ labels a closed path in $\Gamma$.
 
\begin{proof} 

We shall assume that any two connected components of $\Gamma$ are non-isomorphic. Suppose $G:=G(\Gamma)$ contains a hyperbolic element. Then, since the action is acylindrical by Theorem~\ref{thm:acylindricity}, $G$ is either virtually cyclic or acts non-elementarily on $\dot X$. 

Observe that $\Gamma$ contains neither loops nor bigons (i.e.\ simple closed paths of length 1 or 2): in view of the strong reducedness assumption that is part of the $C_n'(1/6,p)$-condition, it only remains to rule out the existence of a simple closed path $\gamma$ with label $s^2$ for some $s\in S$. If such a $\gamma$ exists, then all powers of $s$ label paths in $\Gamma$, whence there exists a closed path $\gamma'$ with label $s^n$ by the $C_n'(1/6,p)$-condition. Since the labelling of $\Gamma$ is reduced, a path in $\Gamma$ is uniquely determined by its starting vertex and its label. Therefore, since $n$ is odd, $\gamma$ and $\gamma'$ cannot start from the same vertex, and the same is true for $\gamma$ and any translate of $\gamma'$ by a label-preserving automorphism of $\Gamma$. Thus $\gamma$ is a piece, contradicting the $C'(1/6)$-condition, whence $\gamma$ cannot exist.

If $G$ is virtually cyclic, then Corollary~\ref{coro:virtually_cyclic} implies that $G$ is infinite cyclic and, in particular, abelian. Consider two different elements $s_1$ and $s_2$ of $S$ and a $\Gamma$-reduced diagram $D$ for the word $s_1s_2s_1^{-1}s_2^{-1}$.

Since $\Gamma$ contains neither loops nor bigons, $D$ has no cut-points. If $D$ has at least two faces, then we may apply Greendlinger's lemma to deduce that $D$ has at least two faces $\Pi_1$ and $\Pi_2$ of interior degree at most 3 each. In particular, we have for each $i=1,2$ that $|\partial\Pi_i\cap\partial D|>|\partial \Pi_i\setminus\partial D|\geq 3$, whence $|\partial D|\geq 8$, a contradiction.

Thus, $D$ consists of a single face with boundary length $4$. This implies that neither $s_1$ nor $s_2$ can label pieces. Hence, there exist automorphisms $\phi_1$ and $\phi_2$ of $\Gamma$ and edges $e_i$ with $\ell(e_i)=s_i$ and $\phi(\iota e_i)=\tau e_i$. Hence the $C_n'(1/6,p)$-condition implies that the elements of $G(\Gamma)$ represented by $s_i$ have orders dividing $n$. Since $s_1$ and $s_2$ were arbitrary, $G(\Gamma)$ is a quotient of $\bigoplus_{s\in S} \Z/n\Z$, which contradicts the existence of an infinite order element.

\medskip

Thus we may assume that $G(\Gamma)$ contains no hyperbolic element. Note that since $S$ is non-empty, this implies that $\Gamma$ is non-empty. 

Assume $\Gamma$ has no component with non-trivial fundamental group. Then $G(\Gamma)$ is a non-trivial free group. As every element is elliptic, Proposition~\ref{prop:elliptic} implies that there exist arbitrarily high powers of freely non-trivial words read on paths in $\Gamma$ and, as every component has trivial fundamental group, these paths must be simple and, in particular, not closed. This contradicts the $C_n'(1/6,p)$-condition. On the other hand, assume $\Gamma$ has more than one component with non-trivial fundamental group. Then it follows from \cite{Gruber:2014wo} that $G(\Gamma)$ does contain a hyperbolic element for the action on $\dot X$, contradicting our assumption. 
Therefore, from now on we assume that $\Gamma$ has exactly one component $\Gamma_0$ with non-trivial fundamental group.

Let $g\in G(\Gamma)$ be a non-trivial element. Since $g$ is not hyperbolic, Proposition~\ref{prop:elliptic} implies that $g$ is elliptic and conjugate to an element of $G(\Gamma)$ that is represented by a word $w$ read on $\Gamma$ such that every power of $w$ can be read on $\Gamma$. By assumption, we have that $w^n$ is read on a closed path in $\Gamma_0$, and say this loop is based at a vertex $v$. The $C'(1/6)$-condition implies that a path labelled by $w^n$ cannot be a piece, whence there exists an automorphism $\phi$ of $\Gamma_0$ such that $w$ is the label of a path from $v$ to $\phi(v)$. Note that this implies that $\phi$ has order dividing $n$.

The label-preserving map $\Gamma_0\to X$ that takes $v$ to $1$ induces a homomorphism $\mathrm{Aut}(\Gamma_0)\to G(\Gamma)$, sending $\psi$ to the element represented by a path from $v$ to $\psi(v)$. Denote by $H$ the image of this homomorphism. Then our above considerations show: $G(\Gamma)=\cup_{g\in G(\Gamma)} gHg^{-1}$. Moreover, every element of $H$, and therefore every element of $G(\Gamma)$, has order dividing $n$, whence we have $G(\Gamma)\in \vburn{n}$.

Suppose $G(\Gamma)$ is finite. It is an easy exercise in group theory to show that whenever a group $G_1$ is the union the of conjugates of a finite index subgroup $G_2$, then $G_1=G_2$. Hence, if $G(\Gamma)$ is finite, then $G(\Gamma)=H$, which implies that $\Gamma$ is actually the (finite) Cayley graph of $G(\Gamma)$ with respect to $S$.
\end{proof}



%
\section{Proofs of the main results}
%
\label{sec: proofs}

We now give a proof of Theorems~\ref{res: main theo - regular sc} and \ref{res: periodic quotient of sc groups - free product}.
These statements actually follow from a more general result.
Indeed as explained at the beginning of \autoref{sec: sc to acylindrical} we are working with a graph $\Gamma$ satisfying a weaker hypotheses -- namely the $C'(\lambda,p)$ and $C'_*(\lambda,p)$ conditions -- which allows the group $G(\Gamma)$ to have infinite order elliptic elements for its action on the corresponding coned-off Cayley graph.
As usual our result has two variants, one for the usual graphical small cancellation and one for graphical small cancellation over free products.
While their proofs are exactly the same, we found it clearer to state them separately.

\begin{theo}
\label{res: periodic quotient of sc groups - standard}
	Let $p \in \N^*$ and $r \in \R_+$.
	There exists $n_{p,r} \in \N$ such that for every odd exponent $n \geq n_{p,r}$ the following holds.
	Let $\Gamma$ be a graph labelled by a set $S$ satisfying the $C'(1/6,p)$-condition.
	Assume that $G=G(\Gamma)$ has no even torsion. 
	We focus on the action of $G$ of the cone-off space $\dot X =\dot X(\Gamma)$.
	There exists a quotient $Q$ of $G$ with the following properties.
	\begin{enumerate}
		\item \label{enu: periodic quotient of sc groups - elliptic embeds}
		Every elliptic subgroup of $G$ embeds in $Q$.
		\item \label{enu: periodic quotient of sc groups - periodicity}
		For every $g \in Q$ that is not the image of an elliptic element we have $g^n = 1$.
		\item \label{enu: periodic quotient of sc groups - universal property}
		If every elliptic subgroup of $G$ belongs $\vburn n$, then $Q$ is isomorphic to $G/G^n$. 
		In particular $Q$ is isomorphic to $G_n(\Gamma)$ and belongs to $\vburn n$.
		\item \label{enu: periodic quotient of sc groups - one-to-one}
		If the action of $G$ on $\dot X$ is non elementary, then $Q$ is infinite.
		Moreover the projection $G \twoheadrightarrow Q$ is one-to-one on small balls in the following sense.
		For every $g \in G\setminus\{1\}$, for every $x \in \dot X$, if $\dist[\dot X]{gx}x \leq r$, then the image of $g$ in $Q$ is not trivial.
		In particular, if $r\geq 1$, then every connected component of $\Gamma$ embeds in the Cayley graph of $Q$ with respect to $S$.
	\end{enumerate}
\end{theo}

\begin{theo}
\label{res: periodic quotient of sc groups - product}
	Let $p \in \N^*$ and $r \in \R_+$.
	There exists $n_{p,r}\in \N$ such that for every odd exponent $n \geq n_{p,r}$ the following holds.
    Let $(F_i)_{i \in I}$ be a collection of groups.
    For each $i \in I$ we fix a generating set $S_i$ of $F_i$ and let $S =\sqcup_{i\in I}S_i$. 
	Let $\Gamma$ be a graph labelled by $S$ satisfying the $C_*'(1/6,p)$-condition.
	Assume that $G=G(\Gamma)$ has no even torsion. 
	We focus on the action of $G$ of the cone-off space $\dot X =\dot X(\Gamma)$.
	There exists a quotient $Q$ of $G$ with the following properties.
	\begin{enumerate}
		\item \label{enu: periodic quotient of sc groups - elliptic embeds - product}
		Every elliptic subgroup of $G$ embeds in $Q$. In particular, every $F_i$ embeds in $Q$.
		\item \label{enu: periodic quotient of sc groups - periodicity - product}
		For every $g \in Q$ that is not the image of an elliptic element we have $g^n = 1$.
		\item \label{enu: periodic quotient of sc groups - universal property - product}
		If every elliptic subgroup of $G$ belongs $\vburn n$, then $Q$ is isomorphic to $G/G^n$. 
		In particular $Q$ is isomorphic to $G_n(\Gamma)$ and belongs to $\vburn n$.
		\item \label{enu: periodic quotient of sc groups - one-to-one - product}
		If the action of $G$ on $\dot X$ is non elementary, then $Q$ is infinite.
		Moreover the projection $G \twoheadrightarrow Q$ is one-to-one on small balls in the following sense.
		For every $g \in G\setminus\{1\}$, for every $x \in \dot X$, if $\dist[\dot X]{gx}x \leq r$, then the image of $g$ in $Q$ is not trivial.
		In particular, if $r\geq 1$, then every connected component of $\Gamma$ embeds in the Cayley graph of $Q$ with respect to $S$.
	\end{enumerate}
\end{theo}

\begin{proof}[Proof of Theorems~\ref{res: periodic quotient of sc groups - standard} and \ref{res: periodic quotient of sc groups - product}]
Let $p \in \N^*$ and $r \in \R_+$.
We define a hyperbolicity constant $\delta = 80$.
Let $\Gamma$ be a labelled graph satisfying the conditions of \autoref{res: periodic quotient of sc groups - standard} or \autoref{res: periodic quotient of sc groups - product}.
We write $G = G(\Gamma)$ for the corresponding group and $\dot X = \dot X(\Gamma)$ for its coned-off Cayley graph. 
According to \autoref{thm:hyperbolicity} the cone-off space $\dot X$ is $\delta$-hyperbolic.
Moreover by \autoref{thm:acylindricity}, there exist constants $L$ and $N$, only depending on $p$, such that the action of $G$ on $\dot X$ is $(100\delta,L,N)$-hyperbolic.  
We assumed that $G$ has no even torsion.
Hence if the action of $G$ is elementary, then $G$ is either elliptic or isomorphic to $\Z$ (\autoref{coro:virtually_cyclic}).
In such a situation the first three conclusions of both theorems are obvious.
Hence from now on we assume that the action of $G$ on $\dot X$ is non-elementary.

\medskip
Since $G$ has no even torsion, every lineal subgroup of $G$ is torsion free (\autoref{coro:virtually_cyclic}).
It follows that the $e(G,\dot X)=1$ (see \autoref{res: invariant e} for the definition of this parameter).
Consequently we can apply \autoref{res: acylindricity gives uniform exp}. We denote by $n_{p,r}$ the critical exponent $N_1$ given by \autoref{res: acylindricity gives uniform exp} applied with the parameters $N$, $L$, $\delta$ and $r$. Observe that $n_{p,r}$ only depends on the chosen $p$ and $r$, not on the specific $\Gamma$. Fix an odd exponent $n \geq n_{p,r}$.
The first three conclusions of Theorems~\ref{res: periodic quotient of sc groups - standard} and \ref{res: periodic quotient of sc groups - product} follow from \autoref{res: acylindricity gives uniform exp}. 
Recall indeed that in the context of small cancellation over free products, the graph $\Gamma$ is its own completion (see \autoref{def: small cancellation condition - free product - completion}), whence every factor $F_i$ we started with acts elliptically on $\dot X$.
The first half of Point~(iv) in both theorems is also a consequence of \autoref{res: acylindricity gives uniform exp}.
The second half follows from this observation: the vertex set of each embedded component of $\Gamma$ in $\Cay(G,S)$ has diameter at most 1 in the metric of $\dot X$.
\end{proof}

We are ready to prove our main result.

\begin{proof}[Proof of \autoref{res: main theo - regular sc}]
Let $p\in \N^*$. 
Let $n_p$ the critical exponent of \autoref{res: periodic quotient of sc groups - standard} for $p$ and $r=2$, i.e. $n_p:=n_{p,2}$. 
Let $\Gamma$ be a graph labelled by a set $S$ containing at least two elements and satisfying the $C'_n(1/6,p)$ condition.
We assume that there is no finite groups $F$ generated by $S$ such that $\Gamma$ contained the Cayley graph of $F$ with respect to $S$.
It follows from \autoref{prop:elementary} that either $G(\Gamma)$ is already an infinite group in $\vburn n$ or the action of  $G(\Gamma)$ on the cone-off Cayley graph $\dot X(\Gamma)$ is non-elementary.
In the first case $G_n(\Gamma) = G(\Gamma)$ and the conclusion follows from the usual graphical small cancellation theory \cite[Theorem~1]{Oll06} or \cite[Lemma~4.1 and Theorem~4.3]{Gruber:2015fu}.

\medskip
Let us focus on the second case, that is when the action of $G(\Gamma)$ on $\dot X(\Gamma)$ is non elementary.
We denote by $Q$ the quotient given by \autoref{res: periodic quotient of sc groups - standard}.
\autoref{prop:elliptic} tells us that every elliptic element for the action of $G(\Gamma)$ on $\dot X$ has order dividing $n$.
Thus the group $G_n(\Gamma)$ coincides with $Q$, which is infinite.
Moreover, every component of $\Gamma$ embeds in $\Cay(G_n(\Gamma),S)$.

\medskip
It remains to prove Point~\ref{enu: main theo - regular sc - coarse embedding}. 
In this claim, $S$ is finite, every component of $\Gamma$ is finite, and $\Gamma$ is countable. 
Since $G_n(\Gamma)$ is infinite and every component of $\Gamma$ embeds in $\Cay(G_n(\Gamma),S)$, we observe that $\Gamma$ embeds in $\Cay(G_n(\Gamma),S)$. 
It remains to prove the coarse embedding result.
We follow the strategy of Gruber \cite[Theorem~4.3]{Gruber:2015fu} to obtain our result. 
For simplicity we let  $G_n=G_n(\Gamma)$, $X=\Cay(G,S)$, and $X_n=\Cay(G_n,S)$.

\medskip
If $\Gamma$ is finite, any map $\Gamma\to X_n$ satisfies the axioms of a coarse embedding, hence there is nothing to prove. 
Therefore we can assume that $\Gamma$ is infinite. 
By lining up the (finite) components of $\Gamma$ on a $1$-infinite geodesic ray in $X_n$, we can choose a label-preserving graph homomorphism $f:\Gamma\to X_n$ such that $d(\Gamma_i,\Gamma_j)\geq \diam(\Gamma_i)+\diam(\Gamma_j)+i+j$. 
We show that $f$ is a coarse embedding.

\medskip
Consider (by abuse of notation) $\Gamma_i$ and $\Gamma_j$ two images in $X_n$ of connected components of $\Gamma$ under any label-preserving graph homomorphism. We claim that $\Gamma_i\cap\Gamma_j$ is empty or connected. 
Let $x$ and $y$ be vertices in $\Gamma_i\cap\Gamma_j$. Let $\tilde x$ be a preimage of $x$ via the map $X\to X_n$. 
According to \cite[Lemma~4.1]{Gruber:2015fu} $\Gamma_i$ also embeds in $X$.
Let us denote by $\widetilde \Gamma_i$ a copy of $\Gamma_i$ in $X$.
Recall that every map we are considering is label-preserving.
Hence up to replacing $\widetilde\Gamma_i$ by a translate of $\widetilde\Gamma_i$ we may always assume that $\tilde x$ belongs to $\widetilde\Gamma_i$ and that the map $X \to X_n$ maps $\widetilde\Gamma_i$ onto $\Gamma_i$.
We build in the same way a lift $\widetilde\Gamma_j$ of $\Gamma_j$ containing $\tilde x$.
Let $\tilde y_i$ be the resulting preimage of $y$ in $\widetilde\Gamma_i$ and $\tilde y_j$ the one in $\widetilde\Gamma_j$, and observe $|\tilde y_i-\tilde y_j|_{\dot X}\leq 2$. Denote by $g$ the element of $G(\Gamma)$ with $g\tilde y_i=\tilde y_j$. Since both $\tilde y_i$ and $\tilde y_j$ map to $y$, the image of $g$ in $G_n$ is trivial. Hence, by \autoref{res: periodic quotient of sc groups - standard}~\ref{enu: periodic quotient of sc groups - one-to-one}, $g$ is trivial in $G(\Gamma)$, and $\tilde y_i=\tilde y_j$. Now by \cite[Lemma~2.17]{Gruber:2014wo}, $\widetilde\Gamma_i\cap\widetilde\Gamma_j$ is connected. Hence, there is a path $\gamma$ in this intersection connecting $\tilde x$ to $\tilde y_i$. The image of $\gamma$ in $X_n$ lies in $\Gamma_i\cap\Gamma_j$ and connects $x$ to $y$, whence $\Gamma_i\cap\Gamma_j$ is connected.

\medskip	
We now prove coarse embedding by contradiction: let $(x_k,y_k)\subset \Gamma\times\Gamma$ be a sequence of pairs of vertices such that $|x_k-y_k|_{\Gamma}\to\infty$ and $|f(x_k)-f(y_k)|_{X_n}$ is bounded. Since $d(f(\Gamma_i),f(\Gamma_j))\geq \diam (\Gamma_i)+\diam(\Gamma_j)+i+j$ we may, by possibly going to a subsequence, assume that for each $k$ there exists $j_k$ such that $x_k,y_k\in \Gamma_{j_k}$. 
Again going to a subsequence and using local finiteness of $X_n$, we may assume that there exists a fixed $g\in G_n$ such that $f(x_k)^{-1}f(y_k)=g$ for every $k$. Now $f(x_1)^{-1}\Gamma_{j_1}\cap f(x_k)^{-1}\Gamma_{j_k}$ contains $g$ and, as shown above, is connected for each $k$. Thus, $f(x_1)^{-1}\Gamma_{j_1}\cap f(x_k)^{-1}\Gamma_{j_k}$ contains a simple path from $1$ to $g$, and the length of such a simple path is bounded by $|V\Gamma_{j_1}|-1$. Hence  $|V\Gamma_{j_1}|-1\geq|x_k-y_k|_{\Gamma_{j_k}}$, which contradicts $|x_k-y_k|_{\Gamma_{j_k}}\to\infty$.
\end{proof}

We are also ready to prove our main theorem in the (classical) free product case. Recall from \autoref{section:definitions_and_notation} that a presentation satisfying the (classical) power-free $C_*'(1/6,p)$-condition for which the generating factors do not have even torsion can be regarded as satisfying the (graphical) $C_*'(1/6,p)$-condition. The graphical version of our result will be stated immediately after.

\begin{proof}[Proof of \autoref{res: periodic quotient of sc groups - free product}]
Let $G$ be a small cancellation quotient of a free product $F = F_1 \ast \dots \ast F_m$ and $\dot X$ be the corresponding cone-off space.
Note that every factor $F_k$ acts elliptically on $\dot X$ (recall that when constructing $\dot X$, we consider the graph $\Gamma$ that is the \emph{completion} of graph that is a disjoint union of cycle graphs labelled by the relators).
Moreover the power-free $C'_*(\lambda,p)$-condition together with \autoref{prop:elliptic} implies that an element $g \in G$ is elliptic if and only if it is conjugate to an element of one of the $F_k$. 
The result is now a direct application of  \autoref{res: periodic quotient of sc groups - product}.
\end{proof}

\begin{theo}
\label{res: main theo - product sc}
	Let $p \in \N^*$.
	There exists a critical exponent $n_p \in \N$ such that for every odd integer $n \geq n_p$ the following holds.
	Let $(F_i)_{i \in I}$ be a collection of groups.
    For each $i \in I$ we fix a generating set $S_i$ of $F_i$ and let $S =\sqcup_{i\in I}S_i$. 
	Let $\Gamma$ be a graph labelled by $S$ satisfying the $C_*'(1/6,p)$-condition such that the action of $G(\Gamma)$ on its cone-off space $\dot X$ is non-elementary. Additionally assume that whenever $w$ is a cyclically reduced word all whose powers label paths in $\Gamma$, then $w^n$ labels a closed path in $\Gamma$. Denote $G_n(\Gamma):=G(\Gamma)/G(\Gamma)^n$. Then the following holds.
	\begin{enumerate}
		\item \label{enu: main theo - product sc - infinite}
		The group $G_n(\Gamma)$ is infinite.
		\item \label{enu: main theo - product sc - embedding}
		 Every connected component of $\Gamma$ embeds into $\Cay(G_n(\Gamma),S)$ via a label-preserving graph homomorphism.	
		\item \label{enu: main theo - product sc - factors}
		 Each one of the generating factors $F_i$ embeds as a subgroup in $G_n(\Gamma)$.
	\end{enumerate}
\end{theo}

\begin{rema}
    Observe that our assumptions imply that every $F_i$ is $n$-periodic.
\end{rema}

\begin{proof}
\autoref{prop:elliptic} and the assumptions imply that every elliptic element for the action of $G(\Gamma)$ on $\dot X$ has order dividing $n$.
The proof now goes as the first part of the one in the non-elementary case of \autoref{res: main theo - regular sc} using \autoref{res: periodic quotient of sc groups - product} instead of \autoref{res: periodic quotient of sc groups - standard}.
\end{proof}

We conclude by providing an example that shows that our restriction on proper powers appearing as subwords of relators is indeed necessary in order to obtain infinite $n$-periodic groups.

\begin{exam}[Pride group \cite{Pride:1989ek}]
\label{exa: pride}
	Let $S = \{a, b\}$.
	We consider relations of the form
	\begin{eqnarray*}
		r_n = a^{-1}b^{p_1n}a^{p_2n}b^{p_3n}\dots b^{p_{i_n}n}\\
		s_n = b^{-1}a^{q_1n}b^{q_2n}a^{q_3n}\dots a^{q_{j_n}n}
	\end{eqnarray*}
	and take the group $G = \left<a,b \mid r_n, s_n, n \in \N\right>$.
	Under a suitable choice of $(p_i)$ and $(q_j)$ this presentation satisfies the $C'(1/6,3)$ assumption.
	Observe though that $G/G^n$ is trivial for every $n$.
	On the other hand, $a$ acts elliptically on the cone-off space $\dot X(\Gamma)$, whence particular the infinite cyclic group $\left<a\right>$ embeds in $Q$, and $Q$ is not a torsion group.
	This does not contradict \autoref{res: main theo - regular sc}, because our presentation does not satisfy  $C'_n(1/6,3)$-condition for any $n$.
	
	Clearly, if one prescribes $n$, already the group given by the 2-generator and 2-relator presentation $\langle a,b\mid r_n,s_n\rangle$ does not admit any non-trivial $n$-periodic quotient. Notice that we may achieve any small cancellation parameter we desire, i.e. make the presentation of Pride's group satisfy any given $C'(\lambda)$-condition. Consequently, we can achieve any positive upper bound for the \emph{relative} lengths of subwords that are proper powers. This shows that an \emph{absolute} upper bound on the powers occurring as subwords of relators is indeed required for making any statement in the nature of our main results.
\end{exam}


\noindent
\emph{R\'emi Coulon} \\
Univ Rennes, CNRS \\
IRMAR - UMR 6625 \\
F-35000 Rennes, France \\
\texttt{remi.coulon@univ-rennes1.fr} \\
\texttt{http://rcoulon.perso.math.cnrs.fr}

\medskip

\noindent
\emph{Dominik Gruber} \\
Department of Mathematics\\
ETH Z\"urich\\
R\"amistrasse 101\\
8092 Z\"urich, Switzerland\\
\texttt{dominik.gruber@math.ethz.ch} \\
\texttt{https://people.math.ethz.ch/$\sim$dgruber/}

\todos
\end{document}